\newcommand{\red}[1]{{\color{red} #1}}
\newcommand{\blue}[1]{{\color{blue} #1}}
\newtheorem{theorem}{Theorem}[section]
\newtheorem{Def}[theorem]{Definition}
\newtheorem{notation}[theorem]{Notation}
\newtheorem{thm}[theorem]{Theorem}
\newtheorem{prop}[theorem]{Proposition}
\newtheorem{lemma}[theorem]{Lemma}
\theoremstyle{remark}
\newtheorem{remark}[theorem]{Remark}
\numberwithin{equation}{section}
\renewcommand{\theequation}%{\arabic{equation}}
{\arabic{section}.\arabic{equation}}
\newcommand{\hb}[1]{\textcolor{blue}{#1}}
\def\RR{\mathbb{R}}
\def\EE{\mathbb{E}}
\def\bfu{{\bf u}}
\def\bfv{{\bf v}}
\def\bfz{{\bf z}}
\newcommand{\ca}{{\mathcal A}}
\newcommand{\ch}{{\mathcal H}}
\newcommand{\cp}{{\mathcal P}}
\newcommand{\cv}{{\mathcal V}}
\def\al{{\alpha}}
\def\ga{{\gamma}}
\newcommand{\lp}{\left(}
\newcommand{\rp}{\right)}
\newcommand{\lc}{\left[}
\newcommand{\rc}{\right]}
\def\l@subsection{\@tocline{2}{0pt}{2.5pc}{5pc}{}}
\begin{document}
\title[Fractional Brownian Volterra signatures]
{Volterra equations driven by rough signals 3:\\
Probabilistic construction of the Volterra rough path for fractional Brownian motions} 
\date{\today}   

\author[F. Harang \and S. Tindel \and X. Wang]
{Fabian Harang \and Samy Tindel \and  Xiaohua Wang}
 \address{Samy Tindel, Xiaohua Wang: 
 Department of Mathematics,
Purdue University,
150 N. University Street,
W. Lafayette, IN 47907,
USA.}
\email{stindel@purdue.edu, wang3296@purdue.edu}

\address{Fabian A. Harang: Department of Economics, BI Norwegian Business School, Handelshøyskolen BI, 0442, Oslo, Norway.}
\email{fabian.a.harang@bi.no}

\thanks{S. Tindel is supported by the NSF grant  DMS-1952966.}

\keywords{}

\begin{abstract}
Based on the recent development of the framework of Volterra rough paths \cite{HarangTindel}, we consider here the probabilistic construction of the Volterra rough path associated to the fractional Brownian motion with $H>\frac{1}{2}$ and for the standard Brownian motion. The Volterra kernel $k(t,s)$ is allowed to be singular, and behaving similar to $|t-s|^{-\gamma}$ for some $\gamma\geq 0$. The construction is done in both the Stratonovich and It\^o sense. It  is based on a modified Garsia-Rodemich-Romsey lemma which has an interest in its own right, as well as tools from Malliavin calculus. A discussion of challenges and potential extensions is provided. 
\end{abstract}

\maketitle 
\vspace{-.6cm}
{
\hypersetup{linkcolor=black}
 %\tableofcontents 
}

%\setcounter{tocdepth}{1}

%\section{} \label{section1}

\section{Introduction}

Volterra equations are used in a great variety of sciences to model evolution phenomena where memory effects are present in the dynamics. Volterra equations typically take the form
\begin{equation}\label{eq:intro Volterra}
y_t=y_0+\int_0^t k(t,s)f(y_s)dx_s,\quad t\in [0,T]
\end{equation}
where $k$ is called Volterra kernel defined on $[0,T]^2$, and is possibly singular on the diagonal. The process $x$ represents a potentially highly irregular control, such as a stochastic processes or otherwise nowhere differentiable path. In applications, the process $x$ is typically stochastic, or a realized path associated to a stochastic process. 
Thus making sense of the integral appearing in~\eqref{eq:intro Volterra} can be be challenging, and  the construction will often depend highly on the assumptions of both the kernel $k$ and the driving process $x$. Applying It\^o's theory for stochastic differential equations, construction of the integral as well as stochastic well-posedness of the equation has been well established in the case  when the process $x$ is a Brownian motion and the kernel $k$ satisfies the integrability condition $k(t,\cdot)\in L^2([0,t])$ for all $t\in [0,T]$, see e.g. \cite{Zhang,OksZha}. 

Volterra equations has recently received much attention towards modeling of rough volatility. In this case it is desirable to allow $x$ to be a more generic stochastic processes, such as a fractional Brownian motion, while at the same time allowing the Volterra kernel $k$ to be singular, see e.g. \cite{Gath,EuchRosenbaum,Bayer2020} and the references therein. With this application in mind, the authors of  \cite{Bayer2020} observed that for numerical computations related to rough volatility modeling, a pathwise approach to Volterra equations of the form \eqref{eq:intro Volterra} is highly useful.  
 The singular Volterra kernel in combination with the driving Brownian motion created divergence in the covariation $\langle \int_0^{\cdot} k(\cdot,s)dW_s,W\rangle$ appearing as an It\^o-Stratonovich correction, requiring an infinite-type renormalization procedure.    Based on the modern theory of Regularity Structures, the authors proved well-posedness of  equation~\eqref{eq:intro Volterra} under such renormalization.

An alternative approach to deal with Volterra equations in a pathwise manner was proposed in \cite{HarangTindel}. There,  a new generic  methodology based on the theory of rough paths was proposed to treat Banach-valued Volterra equations like \eqref{eq:intro Volterra} in the case when the Volterra kernel $k(t,s)$ is 
 behaving similarly to  $|t-s|^{-\gamma}$ for some $\gamma\geq 0$, and the driving signal $x$ is only assumed to be H\"older continuous (with H\"older regularity possibly lower than $1/2$). 
 The Volterra rough path framework is developed around a splitting of the arguments in a Volterra process, in the sense that one lifts the classical form of Volterra process $z_t:=\int_0^t k(t,s)dx_s$ defined on $[0,T]$, to a two parameter object defined on the simplex $\Delta_2[0,T]:=\{(s,t)\in[0,T]^2 ; \, s\leq t\}$ given formally by 
\begin{equation}\label{eq:simple volterra}
z_t^\tau:=\int_0^t k(\tau,s)dx_s,\quad t\leq \tau. 
\end{equation}
Clearly, when the two parameter object is restricted to the diagonal in $[0,T]^2$, we have  $z_t^t=z_t$, obtaining the classical type of Volterra process. 
The advantage of viewing the Volterra process as this two parameter object is that one can easily distinguish between the regularity contributed by the driving signal versus the possible singularity obtained from the kernel $k$, thus making pathwise regularity analysis easier, and sewing based arguments more straightforward. 

 In a similar spirit as for classical rough paths, the idea is to lift the Volterra signal $(t,\tau)\mapsto z_t^\tau$ as defined in \eqref{eq:simple volterra},  to a signature type object (see e.g. \cite{FriVic,TindelNualart2011}) resembling a collection of iterated integrals, satisfying certain algebraic relations, which is called the Volterra signature.
 In this article we will focus on the second order lift; $z\mapsto (\bfz^1,\bfz^2)$. In the case of a smooth signal $x$, the two components take the form 
 \begin{equation}\label{eq:first rp}
     \bfz^{1,\tau}_{ts}=z^{\tau}_{ts},\qquad \bfz^{2,\tau}_{ts}=\int_s^t k(\tau,r)\int_s^r k(r,u)\,dx_u\,dx_r. 
 \end{equation}
 In contrast to classical rough path theory, the Volterra signature does not satisfy Chen's relation with the tensor product, but a convolution type product is required in order to obtain an equivalent algebraic relation. 
 Indeed, by definition of $\bfz^2$ above, one can readily check that for any $s\leq u\leq t\leq \tau$
 \begin{equation*}
     \bfz^{2,\tau}_{ts}-\bfz^{2,\tau}_{tu}-\bfz^{2,\tau}_{us}\neq \bfz^{1,\tau}_{tu}\otimes \bfz^{1,\tau}_{us}. 
 \end{equation*}
 However, as observed in \cite{HarangTindel} the following generalized Chen's relation holds: 
 \begin{equation*}
     \bfz^{2,\tau}_{ts}-\bfz^{2,\tau}_{tu}-\bfz^{2,\tau}_{us}= \bfz^{1,\tau}_{tu}\ast  \bfz^{1,\cdot}_{us}
 \end{equation*}
 where the convolution product $\ast$ used on the right hand side is defined by 
 \begin{equation}\label{eq:first conv}
     \bfz^{1,\tau}_{tu}\ast  \bfz^{1,\cdot}_{us}:=\lim_{|\cp|\rightarrow 0} \sum_{[u',v']\in\cp } \bfz^{1,\tau}_{v'u'}\otimes  \bfz^{1,u'}_{us}. 
 \end{equation}
 Notice that in the classical rough paths setting where $k(t,s)\equiv 1$, then $z_t^\tau = z_t$, and $\bfz^1\ast \bfz^1=\bfz^1\otimes \bfz^1$. It is the introduction of a Volterra kernel which requires an extension of the classical tensor product in order to obtain a suitable Chen's relation for the Volterra rough path. So far, the assumption has been that $x$ is a smooth path, and in this case both the iterated integral in \eqref{eq:first rp} and the convolution product constructed as an integral in \eqref{eq:first conv} exist by standard integration arguments. However, in the theory of rough paths we are interested in irregular, nowhere differentiable signals $x$, requiring a careful analysis of the construction of these objects.  Once this is in place, 
the Volterra signature in combination with certain controlled Volterra paths is used in~\cite{HarangTindel} to prove existence and uniqueness of  solutions to \eqref{eq:intro Volterra} in a purely pathwise manner. 

Although  \cite{HarangTindel}  provides the basic framework for Volterra rough paths, two important problems relating to this theory was left open:
\begin{description}
    \item[Analytic extension] On the analytic side, \cite{HarangTindel} only deals with the case when  $\alpha-\gamma\geq 1/3$ (where we recall that $\alpha$ is the regularity of the signal, while $\gamma$ is the possible order of singularity from the kernel $k$). To get a complete analytic picture of the framework of Volterra rough paths, this regime must be extended to $\alpha-\gamma>0$. 
    \item[Probabilistic construction] For completeness of the framework it is crucial to provide a complete probabilistic construction of the lift of a stochastic Volterra  process into a Volterra rough path, analogues to the rough path lift for stochastic processes.
\end{description}

Regarding the analytic problem, extending this regime was dealt with in the article \cite{HarangTindelWang}, where the algebraic framework was described for $\alpha-\gamma\geq 1/4$. In a very recent article \cite{bruned2021ramification}, Bruned and Kastetsiadis  extends this even further to all $\alpha-\gamma>0$ by invoking algebraic theories similar to that used for non-geometric rough paths \cite{Gubinelli2010,HK2015} and  regularity structures \cite{Hairer2014}. 

The problem of a probabilistic construction of the Volterra rough path is the main goal of the current article. More specifically, our main contribution is twofold:
\begin{enumerate}[wide, labelwidth=!, labelindent=0pt, label=(\roman*)]
\setlength\itemsep{.03in}

\item
 As the framework for Volterra rough paths relies on spaces for Volterra--H\"older  paths with two parameters (one corresponding to regularity and one to singularity), a direct application of the classical Kolmogorov continuity theorem will not provide sufficient answers. Hence new arguments need to be developed, specifically suited for the type of H\"older spaces necessary to properly define rough Volterra equations. Extending the Garsia--Rodemich--Rumsey (GRR) inequality to suit Volterra paths is therefore the first aim of this article.
 This extension is not only highly useful for the probabilistic treatment in the context of Volterra rough paths, but could also prove valuable towards applications for other types of singular H\"older norms, such as those considered in \cite{Bellingeri2021SingularPS}.  
 
 \item
 With the singular GRR inequality in hand, we provide a construction of the Volterra rough path in the regime $\alpha-\gamma\geq \frac{1}{3}$ (requiring one iterated integral) by using tools from the theory of Malliavin calculus. This construction is both done in the case when the driving stochastic process is a fractional Brownian motion with $H>\frac{1}{2}$ together with a singular kernel. We also handle the case of classical Brownian motion with a singular kernel. Note that in both  cases the construction of a Volterra rough path like \eqref{eq:first rp} is required. Indeed, a singular kernel behaving similarly to $|t-s|^{-\gamma}$ pushes down the regularity of the Volterra process constructed from an fBm to be $H-\gamma$. This exponent can be smaller than $\frac{1}{2}$ even though $H>\frac{1}{2}$. This is in contrast to the classical rough path regime, where the rough path associated to a fractional Brownian motion with $H>\frac{1}{2}$ can simply be constructed by classical Young theory.  

\end{enumerate}
\noindent
As the reader can see from the description above, our analysis will be a delicate combination of analytic and probabilistic techniques.

The article is organized as follows: Section \ref{sec:prelim} provides an overview of Volterra paths, Volterra-H\"older spaces, as well as a summary of central concepts from \cite{HarangTindel} regarding the convolution product and Volterra sewing. In Section \ref{fbm case} the extension of the GRR inequality is provided. Section \ref{Volterra rough path driven by fractional Brownian motion} deals with the construction of the Volterra rough path for fractional Brownian motion with $H>\frac{1}{2}$. In Section \ref{sec:VRP for BM} this construction is  extended to the case of a regular Brownian motion. A discussion on further extensions to rough fractional Brownian motion is discussed in the end of Section \ref{sec:VRP for BM}.

\section{Preliminary results}\label{sec:prelim}
In \cite{HarangTindel} and \cite{HarangTindelWang}, the Volterra rough formalism was based on certain spaces of functions having specific regularity/singularity features. Before defining the proper spaces quantifying this type of regularity, let us introduce some notation:
\begin{notation}\label{notation of delta}
Let $T>0$ be a finite time horizon, and $n\geq 2$. Then the simplex $\Delta_{n}^{T}$ is defined by 
\[
\Delta_{n}^{T}\big\{(s_{1},\ldots, s_{n})\in [0,T]^{n}; 0\leq s_{1}<\cdots<s_{n}\leq T\big\}.
\]
When this causes no ambiguity, we will abbreviate $\Delta_{n}^{T}$ as $\Delta_{n}$. For $(s,t)\in \Delta_{2}$, we designate $\mathcal{P}$ to be a generic partition of $[s,t]$. Two successive points forming an interval contained in this partition are written as $[u,v]\in\mathcal{P}$.
\end{notation}
The functions quantifying our regularities are also labeled in the following notation.
\begin{notation}\label{notation of psi}
Consider  four parameters $\alpha, \gamma\in (0,1)$ and $\zeta, \eta\in [0,1]$ satisfying 
\begin{eqnarray}\label{condition parameters}
\rho\equiv \alpha-\gamma>0, \qquad 0\le \zeta \le \inf(\rho,\eta).
\end{eqnarray}
 For $(s,t,\tau^{\prime}, \tau)\in \Delta_4$, we set 
\begin{equation}\label{psi 1}
\psi^{1}_{(\alpha,\gamma)}(\tau,t,s)=\lc |\tau-t|^{-\gamma}|t-s|^{\alpha}\rc\wedge |t-s|^{\rho}, 
\end{equation}
and 
\begin{equation}\label{psi 12}
\psi^{1,2}_{(\alpha,\gamma,\eta,\zeta)}(\tau,\tau^{\prime},t,s)=|\tau-\tau^{\prime}|^{\eta}|\tau^{\prime}-t|^{-(\eta-\zeta)}\left(\lc |\tau^{\prime}-t|^{-\gamma-\zeta}|t-s|^{\alpha} \rc\wedge |t-s|^{\rho-\zeta}\right).
\end{equation}
\end{notation}
In the next defintion the functional spaces called $\mathcal{V}^{(\alpha,\gamma,\eta,\zeta)}$ are introduced, which are equivalent to those used in \cite{HarangTindel, HarangTindelWang}. As is evident from the analysis in \cite{HarangTindel, HarangTindelWang}, these spaces are natural function sets when dealing with Volterra type regularities.
\begin{Def}\label{Volterra space}
Consider four parameters $\alpha, \gamma\in (0,1)$ and $\zeta, \eta\in [0,1]$ satisfying relation~\eqref{condition parameters}, and fix $m\geq1$.  Throughout the article we consider functions  $z:\Delta_{2}\rightarrow \mathbb{R}^{m}$ of the form $(t,\tau)\mapsto z_{t}^{\tau}$, such that $z_{0}^{\tau}=z_{0}$ for all $\tau\in(0,T]$. We define the space of Volterra paths of index $(\alpha,\gamma,\eta,\zeta)$, denoted by $\mathcal{V}^{(\alpha,\gamma,\eta,\zeta)}(\Delta_{2};\mathbb{R}^{m})$, as the set of such functions satisfying  
\begin{equation}\label{Volterra norm}
\|z\|_{(\alpha,\gamma,\eta,\zeta)}=\|z\|_{(\alpha,\gamma),1}+\|z\|_{(\alpha,\gamma,\eta,\zeta),1,2} < \infty.
\end{equation}
Recalling Notation \ref{notation of delta} and \ref{notation of psi},
the 1-norms and 1,2-norms in~\eqref{Volterra norm} are respectively defined as follows:
\begin{flalign}\label{V norm}
\| z\|_{\left(\alpha,\gamma\right),1}&=\sup_{\left(s,t,\tau\right)\in\Delta_{3}}
\frac{|z_{ts}^{\tau}|}{\psi^{1}_{(\alpha,\gamma)}(\tau,t,s)},\\
\label{V norm b}
\| z\|_{\left(\alpha,\gamma,\eta,\zeta\right),1,2}&
=\sup_{\left(s,t,\tau^{\prime},\tau\right)\in\Delta_{4}}\frac{|z_{ts}^{\tau\tau^{\prime}}|}{\psi^{1,2}_{(\alpha,\gamma,\eta,\zeta)}(\tau,\tau^{\prime},t,s)},
\end{flalign}
with the convention $z^{\tau}_{ts}=z^{\tau}_{t}-z^{\tau}_{s}$ and $z^{\tau\tau^{\prime}}_{s}=z^{\tau}_{s}-z^{\tau^{\prime}}_{s}$. 
Notice that under the mapping 
\[
z\mapsto|z_{0}|+\| z\|_{\left(\alpha,\gamma,\eta,\zeta\right)},
\]
 the space $\mathcal{V}^{\left(\alpha,\gamma,\eta,\zeta\right)}$ is a Banach
space. 
\end{Def}
\begin{remark}\label{volterra embedding}
As mentioned in \cite[Remark 2.6]{HarangTindelWang}, the spaces $\mathcal{V}^{(\alpha,\gamma,\eta,\zeta)}$ enjoy embedding properties of the form $\mathcal{V}^{(\alpha,\gamma,\eta,\zeta)}\subset \mathcal{V}^{(\beta,\gamma,\eta,\zeta)}$ for $0<\alpha<\beta<1$. In addition, the norms defined by \eqref{Volterra norm}-\eqref{V norm b} verify the following relation on $[0,T]$:
\begin{equation*}
\|y\|_{(\beta,\gamma),1}\leq T^{\alpha-\beta}\|y\|_{(\alpha,\gamma),1},\quad
\|y\|_{(\beta,\gamma,\eta,\zeta),1,2}\leq T^{\alpha-\beta}\|y\|_{(\alpha,\gamma,\eta,\zeta),1,2},\quad
\|y\|_{(\beta,\gamma,\eta,\zeta)}\leq T^{\alpha-\beta}\|y\|_{(\alpha,\gamma,\eta,\zeta)}.
\end{equation*}
\end{remark}

\begin{remark}
The spaces given in Definition \ref{Volterra space} are slightly different than the once introduced in \cite{HarangTindel}, as there is now supremum over the parameters $\eta$ and $\zeta$ appearing here. It was observed in \cite{HarangTindelWang} that the supremum was unnecessary for the Volterra rough path methodology to work, but one must instead introduce an assumption that the Volterra paths of interest is contained in a suitable family Volterra spaces as those in Definition \ref{Volterra space}.
Avoiding the original supremum also makes probabilistic analysis, as we will consider in the subsequent sections, more tractable. 
We therefore use here the same types of norms and spaces here as the ones introduced in \cite{HarangTindelWang} 
\end{remark}

\begin{remark}
Comparing \eqref{psi 1} and \eqref{psi 12}, one can relate the functions $\psi^{1}$ and $\psi^{1,2}$ in the following way: 
\begin{equation}\label{relation psi 1 and 12}
\psi^{1,2}_{(\alpha,\gamma,\eta,\zeta)}(\tau,\tau^{\prime},t,s)=\left|\tau-\tau^{\prime}\right|^{\eta}\left|\tau^{\prime}-t\right|^{-(\eta-\zeta)}\psi^{1}_{(\alpha,\gamma+\zeta)}(\tau^{\prime},t,s)
\end{equation}
As explained in \cite[Proposition 2.10]{HarangTindelWang}, the parameter $\eta$ above accounts for the regularity of a Volterra path in the upper variables $\tau$, $\tau^{\prime}$. Then one plays with extra parameters $\zeta$ in order to get regularities for paths of the form $r\mapsto z^{r}_r$.
\end{remark}

As illustrated in the introduction, convolution products plays a central role for the subsequent  considerations of the Volterra rough path.  Let us recall a proposition from \cite{HarangTindelWang} giving explicit meaning  to this concept, and establishing the existence in a general setting.
\begin{prop}\label{one step conv}
We consider two Volterra paths
 $z\in\mathcal{V}^{\left(\alpha,\gamma,\eta,\zeta\right)}(\mathbb{R}^{m})$ and $y\in\mathcal{V}^{(\alpha,\gamma,\eta,\zeta)}(\mathcal{L}(\RR^{m}))$ as given in Definition \ref{Volterra space}. On top of condition \eqref{condition parameters}, we assume that the exponents $\alpha, \eta$ are such that $\eta>1-\alpha$. Otherwise stated, our parameters $\alpha, \gamma, \zeta, \eta$ satisfy
 \begin{eqnarray}\label{condition parameters 2}
\rho\equiv \alpha-\gamma>0, \qquad 0\le \zeta\le \inf(\rho, \eta), \quad\text{and}\quad \eta>1-\alpha .
\end{eqnarray}
Then the convolution product of the two Volterra paths $y$ and $z$ is a bilinear operation on $\mathcal{V}^{\left(\alpha,\gamma,\eta,\zeta\right)}(\mathbb{ R}^{m})$ given by 
\begin{equation}\label{convolution in 1d}
\text{\ensuremath{z_{tu}^{\tau}\ast y_{us}^{\cdot}}}
=
\int_{t>r>u}dz_{r}^{\tau} y_{us}^{r} :=\lim_{\left|\mathcal{P}\right|\rightarrow0}\sum_{\left[u^{\prime},v^{\prime}\right]\in\mathcal{P}} z_{v^{\prime}u^{\prime}}^{\tau} y_{us}^{u^{\prime}},
\end{equation}
where $\mathcal{P}$ is a generic partition of $[u,t]$ for which we recall Notation \ref{notation of delta}. 
 The integral in \eqref{convolution in 1d} is understood as a Volterra-Young integral for all $ (s,u,t,\tau)\in \Delta_{4}$. Moreover, the following two inequalities hold for  any tuple $(s,u,t,\tau,\tau')$ lying in $\Delta_5$:
\begin{align}\label{eq:z conv y bound}
\left|z_{tu}^{\tau}\ast y_{us}^{\cdot}\right|&\lesssim\|z\|_{\left(\alpha,\gamma\right),1}\|y\|_{\left(\alpha,\gamma,\eta,\zeta\right),1,2} \, \psi^{1}_{(2\rho+\gamma,\gamma)}(\tau,t,s),
\\
\left|z_{tu}^{\tau'\tau}\ast y_{us}^{\cdot}\right|&\lesssim\|z\|_{\left(\alpha,\gamma,\eta,\zeta\right),1,2}\|y\|_{\left(\alpha,\gamma,\eta,\zeta\right),1,2} \, \psi^{1,2}_{(2\rho+\gamma, \gamma,\eta,\zeta)}(\tau,\tau^{\prime},t,s). \label{eq:z conv y bound 2}
\end{align}
\end{prop}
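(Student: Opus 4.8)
The plan is to realize the convolution \eqref{convolution in 1d} as a Volterra--Young integral through a sewing argument, and then to read off the two estimates by propagating the singular weights $\psi^1$ and $\psi^{1,2}$ through the resulting bound. Throughout I freeze $s$ and $\tau$ and view $r\mapsto z^\tau_r$ as the integrator and $r\mapsto y^r_{us}$ as the integrand on $[u,t]$. For a partition $\mathcal P$ of $[u,t]$ I introduce the germ $A_{u'v'}:=z^\tau_{v'u'}\,y^{u'}_{us}$, so that the Riemann sums in \eqref{convolution in 1d} are exactly $\sum_{[u',v']\in\mathcal P}A_{u'v'}$.

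The first step is the defect computation. Inserting a point $w\in[u',v']$ and using the additivity $z^\tau_{v'u'}=z^\tau_{v'w}+z^\tau_{wu'}$, a direct cancellation gives
\[
\delta A_{u'wv'}:=A_{u'v'}-A_{u'w}-A_{wv'}=z^\tau_{v'w}\big(y^{u'}_{us}-y^{w}_{us}\big)=-\,z^\tau_{v'w}\,y^{wu'}_{us}.
\]
I then estimate the two factors directly from the definitions \eqref{V norm}--\eqref{V norm b}, bounding $|z^\tau_{v'w}|$ by $\|z\|_{(\alpha,\gamma),1}\,\psi^1_{(\alpha,\gamma)}(\tau,v',w)$ and $|y^{wu'}_{us}|$ by $\|y\|_{(\alpha,\gamma,\eta,\zeta),1,2}\,\psi^{1,2}_{(\alpha,\gamma,\eta,\zeta)}(w,u',u,s)$. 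The decisive observation is that the only partition-sensitive lengths in this product are $|v'-w|^\alpha$, from the integrator, and $|w-u'|^\eta$, from the regularity of $y$ in its upper variable; hence the defect carries a factor $\lesssim|v'-u'|^{\alpha+\eta}$ with $\alpha+\eta>1$, which is precisely the Young super-additivity guaranteed by the standing hypothesis $\eta>1-\alpha$ in \eqref{condition parameters 2}. All the remaining factors, namely $|\tau-v'|^{-\gamma}$, $|u'-u|^{-(\eta-\zeta)}$ and the bracket $\bigl[\,\cdots\,\bigr]\wedge|u-s|^{\rho-\zeta}$, are frozen singular weights in the endpoints $\tau,u,s$.

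With this defect estimate I would invoke the Volterra sewing lemma of \cite{HarangTindel}, whose hypotheses are tailored precisely to germs carrying such integrable endpoint singularities: it yields convergence of the Riemann sums to $z^\tau_{tu}\ast y^\cdot_{us}$ and controls the integral by the single-interval germ together with the accumulated defect. To reach \eqref{eq:z conv y bound} I carry out the weight bookkeeping, summing the $|v'-u'|^{\alpha+\eta}$ density against the weight $|u'-u|^{-(\eta-\zeta)}$ over $[u,t]$ (the exponent $\eta-\zeta\le\eta\le1$ keeping the singularity at $u$ integrable), combining the $\rho$-contribution carried by $y^{\cdot}_{us}$ on $[s,u]$ with the $\rho$-contribution of $z^\tau$ on $[u,t]$, and using the algebraic identity $\alpha+\rho=2\rho+\gamma$. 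This reproduces the near-diagonal exponent $|t-s|^{2\rho}$ and the off-diagonal shape $|\tau-t|^{-\gamma}|t-s|^{2\rho+\gamma}$, that is, exactly $\psi^1_{(2\rho+\gamma,\gamma)}(\tau,t,s)$. The bound \eqref{eq:z conv y bound 2} follows from the identical scheme applied to the integrator $r\mapsto z^\tau_r-z^{\tau'}_r$: the defect becomes $-\,z^{\tau\tau'}_{v'w}\,y^{wu'}_{us}$, and estimating $|z^{\tau\tau'}_{v'w}|$ by $\|z\|_{(\alpha,\gamma,\eta,\zeta),1,2}\,\psi^{1,2}_{(\alpha,\gamma,\eta,\zeta)}(\tau,\tau',v',w)$ feeds the extra prefactor $|\tau-\tau'|^\eta|\tau'-v'|^{-(\eta-\zeta)}$ through the sewing estimate to assemble $\psi^{1,2}_{(2\rho+\gamma,\gamma,\eta,\zeta)}(\tau,\tau',t,s)$, in accordance with the relation \eqref{relation psi 1 and 12}.

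The main obstacle is not the convergence, which is the standard Young/sewing mechanism, but the faithful propagation of the singular weights through the sewing estimate while respecting the minimum ($\wedge$) structure of $\psi^1$ and $\psi^{1,2}$. Concretely, one must verify that both regimes of the minimum are recovered simultaneously with the correct exponents, that the endpoint singularities in $\tau$ and in $u$ remain integrable over the relevant sub-simplices, and that the single-interval germ $A_{ut}$ is controlled with the claimed weight. This matching of the near-diagonal and off-diagonal behaviors is exactly where the Volterra-specific refinements of the sewing lemma in \cite{HarangTindel, HarangTindelWang} are indispensable.
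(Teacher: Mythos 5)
The paper itself gives no proof of Proposition \ref{one step conv}: it is recalled verbatim from \cite{HarangTindelWang}, where it is established exactly along the lines you propose --- germ $A_{u'v'}=z^{\tau}_{v'u'}y^{u'}_{us}$, the defect identity $\delta A_{u'wv'}=-z^{\tau}_{v'w}y^{wu'}_{us}$, the Young condition $\alpha+\eta>1$, and the singular Volterra sewing lemma to absorb the weights $|\tau-v'|^{-\gamma}$ and $|u'-u|^{-(\eta-\zeta)}$. So your route is the intended one and the key computations (the cancellation in the defect, the exponent bookkeeping $\alpha+\rho=2\rho+\gamma$, and the passage to the $(1,2)$-increment via \eqref{relation psi 1 and 12}) are all correct. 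The one point worth flagging is the boundary term of the sewing estimate: the single-interval germ $A_{ut}=z^{\tau}_{tu}\,y^{u}_{us}$ is controlled by $\|z\|_{(\alpha,\gamma),1}\|y\|_{(\alpha,\gamma),1}\,\psi^{1}_{(\alpha,\gamma)}(\tau,t,u)\,|u-s|^{\rho}$, i.e.\ it involves the $1$-norm of $y$ rather than only the $(1,2)$-norm appearing in \eqref{eq:z conv y bound}, so a complete write-up should either carry that extra factor or note how it is subsumed in the convention of the cited reference.
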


Commonly for rough path based theories, we will also here work with the $\delta$--operator. The next notation recalls this operator which will be significant for subsequent proofs.   

\begin{notation}\label{notation of delta g}
Let $g$ be a path from $\Delta_{2}$ to $\mathbb{R}^{m}$, and consider $(s,u,t)\in \Delta_{3}$. Then the quantity $\delta_{u}g_{ts}$ is defined by 
\begin{equation}\label{delta}
\delta_{u}g_{ts}=g_{ts}-g_{tu}-g_{us}.
\end{equation}
\end{notation}

With Definition \ref{Volterra space} and Proposition \ref{one step conv} in hand we are now ready to state the main assumption used in \cite{HarangTindelWang}. Namely the  Volterra rough paths analysis relies on the ability to construct a family $\{z^{j,\tau}; j\leq n\}$ of Volterra iterated integrals according to the following definition: 

\begin{Def}\label{hyp 2}
Consider $\alpha, \gamma\in (0,1)$,  and for an arbitrary finite integer $N\ge 1$, let $\{\zeta_{k},\eta_{k}; 1\le k\le N\}$ be a family of exponents satisfying the relation \eqref{condition parameters 2}. Then for $n=\lfloor \rho^{-1}\rfloor $, this family $\{\bfz^{j,\tau}; j\leq n\}$  is assumed to enjoy the following properties: 
\begin{enumerate}[wide, labelwidth=!, labelindent=0pt, label=(\roman*)]

\item
$\bfz^{1}=z$ and $\bfz^{j,\tau}_{ts}\in (\mathbb{R}^{m})^{\otimes j}$.

\item
For all $j\leq n$ and $(s,t,\tau)\in \Delta_{3}$ we have 
\begin{equation}\label{hyp b}
\delta_{u}\bfz^{j,\tau}_{ts}=\sum_{i=1}^{j-1}\bfz^{j-i,\tau}_{tu}\ast \bfz^{i,\cdot}_{us}=\int_{s}^{t}d\bfz^{j-i,\tau}_{tr}\, \bfz^{i,r}_{us},
\end{equation}
where the right hand side of \eqref{hyp b} is given by Proposition \ref{one step conv}.
\item\label{zj space}
For all $j=1,\ldots,n$, we have $ {\bf z}^{j}\in \bigcap^{N}_{k=1}\mathcal{V}^{(j\rho+\gamma,\gamma,\eta_{k},\zeta_{k})}$.
\end{enumerate}
\end{Def}

As the reader might have observed,  Definition \ref{hyp 2} is a natural extension of the more classical definition of rough path, see e.g. \cite{FriHai}, where convolution products naturally extends the tensor product for Volterra paths. In the decomposition \eqref{hyp b}, it is desirable to  quantify the regularity of the objects depending on the variables $(s,u,t,\tau)\in \Delta_{4}$. Thus a small variation of Definition~\ref{Volterra space} is suitable for this quantification, illustrated in the next definition (see also in \cite[Definition 2.9]{HarangTindelWang}).

\begin{Def}\label{def of delta norm}
As in Definition \ref{Volterra space}, consider $m\geq 1$, as well as four parameters $\alpha,\gamma\in (0,1)$, $\eta, \zeta\in [0,1]$ satisfying the relation~\eqref{condition parameters 2}. Let $\bfz: \Delta_{4}\rightarrow \mathbb{R}^{m}$ be of the form $(s,u,t,\tau)\mapsto \bfz^{\tau}_{tus}$. The definition of $\mathcal{V}^{(\alpha,\gamma,\eta,\zeta)}(\Delta_{3};\mathbb{R}^{m})$ can be extended in order to define a space $\mathcal{V}^{(\alpha,\gamma,\eta,\zeta)}(\Delta_{4};\mathbb{R}^{m})$, by using the same definition as \eqref{Volterra norm}. That is we have $z\in\mathcal{V}^{(\alpha,\gamma,\eta,\zeta)}(\Delta_{4};\mathbb{R}^{m})$ if
\begin{equation}\label{Volterra norm delta}
\|z\|_{(\alpha,\gamma,\eta,\zeta)}=\|z\|_{(\alpha,\gamma),1}+\|z\|_{(\alpha,\gamma,\eta,\zeta),1,2} < \infty.
\end{equation}
The quantities $\|z\|_{(\alpha,\gamma),1}$ and $\|z\|_{(\alpha,\gamma,\eta,\zeta),1,2}$ in \eqref{Volterra norm delta} are slight modifications of \eqref{V norm} and \eqref{V norm b}, respectively defined by
\begin{equation}\label{delta zj norm 1}
 \|\bfz\|_{(\alpha,\gamma),1}=\sup_{(s,u,t,\tau)\in\Delta_{4}}\frac{|\bfz_{tus}^{\tau}|}{\psi^{1}_{(\alpha,\gamma)}(\tau,t,s)},
 \end{equation} 
and 
 \begin{equation}\label{delta zj norm 12}
 \|\bfz\|_{(\alpha,\gamma,\eta,\zeta),1,2}=\sup_{\left(s,u,t,\tau^{\prime},\tau\right)\in\Delta_{5}}\frac{|\bfz^{\tau\tau^{\prime}}_{tus}|}{\psi^{1,2}_{(\alpha,\gamma,\eta,\zeta)}(\tau,\tau^{\prime},t,s)}.
 \end{equation}
\end{Def}

\section{An extension of Garsia-Rodemich-Rumsey's inequality}\label{fbm case}

This section is devoted to extend Garsia-Rodemich-Rumsey's celebrated result \cite{Garsia} to the Volterra space $\mathcal{V}^{(\alpha,\gamma,\eta,\zeta)}$ introduced in Definition \ref{Volterra space}. To this aim, we introduce two integral functionals resembling the role of a Sobolev norm, tailored for the regularity functions introduced in \eqref{psi 1}--\eqref{psi 12}. These will be used to extend the Garsia--Rodemich--Rumsey inequality to Volterra paths.  
\begin{Def}\label{def of U volterra}
Let $z:\Delta_3\rightarrow \RR^d$ be a continuous  Volterra increment. Then for some parameters  $p\geq1$ and  $\alpha,\gamma\in (0,1)$, $\eta, \zeta\in [0,1]$ satisfy the relation \eqref{condition parameters} we define
\begin{align}
  U^{\tau}_{(\alpha,\gamma),p,1}\left(z;\eta,\zeta\right)&:=\left(\int_{(v,w)\in \Delta^{\tau}_{2}}\frac{|z^{\tau}_{wv}|^{2p}}{|\tau-w|^{-2p(\eta-\zeta)}|\psi^{1}_{(\alpha,\gamma+\zeta)}(\tau,w,v)|^{2p}|w-v|^{2}}dvdw \right)^{\frac{1}{2p}}\label{U1}
    \\
    U^{\tau}_{(\alpha,\gamma,\eta,\zeta),p,1,2}\left(z\right)&:=\left(\int_{(v,w,r^{\prime},r)\in\Delta^{\tau}_{4}}\frac{|z^{rr'}_{wv}|^{2p}}{|\psi^{1,2}_{(\alpha,\gamma,\eta,\zeta)}(r,r',w,v)|^{2p}|w-v|^{2}|r-r'|^2}dvdw dr^{\prime}dr\right)^{\frac{1}{2p}}\label{U12},
\end{align}
where recall that the functions $\psi^{1}, \psi^{1,2}$ are respectively defined in \eqref{psi 1} and \eqref{psi 12}.
\end{Def}
\begin{remark}\label{rem:increasing}
Notice that if we set 
\begin{equation*}
D^{\tau}(w,v)=\frac{|z^{\tau}_{wv}|^{2p}}{|\tau-w|^{-2p(\eta-\zeta)}|\psi^{1}_{(\alpha,\gamma)}(\tau,w,v)|^{2p}|w-v|^{2}},
\end{equation*}
then we trivially have $D^{\tau}(w,v)\ge 0$. Plugging this information in relation \eqref{U1}, we get that $\tau\mapsto U^{\tau}_{(\alpha,\gamma), p,1}(z;\eta,\zeta)$ is a non-decreasing function. Thus for $\tau\le T$ we have
$U^\tau_{(\alpha,\gamma),p,1}(z;\eta,\zeta)\le U^T_{(\alpha,\gamma),p,1}(z;\eta,\zeta)$. 
\end{remark}
\begin{remark}
The quantity $U^{\tau}_{(\alpha,\gamma),p,1}\left(z;\eta,\zeta\right)$ evaluated at $\eta=\zeta=0$, will be denoted by $U^{\tau}_{(\alpha,\gamma),p,1}\left(z\right)$ for notational sake.
\end{remark}

Using the above integral functionals, we now state and prove the extension of  Garsia-Rodemich-Rumsey's inequality for general Volterra increments on $\Delta_3$. This will in turn be applied to provide an upper bound for the Volterra norms introduced in Definition \ref{Volterra space} in terms of the integral functionals in Definition \ref{def of U volterra}.

\begin{lemma}\label{GRR lemma volterra}
Let $\bfz:\Delta_3\rightarrow \RR^d$ be a continuous increment.  Consider 4 parameters $\kappa,  \gamma, \eta, \zeta$ such that 
\begin{equation}\label{relation kappa}
0\le \gamma<\kappa<1, \quad 0\le \zeta<\kappa-\gamma, \quad\text{and}\quad \zeta\le \eta\le1.
\end{equation}
Then there  exists a universal constant $C>0$ such that for all $(s,t,\tau)\in \Delta_3$ we have
\begin{eqnarray}
|\bfz_{ts}^\tau|\leq C |\tau-t|^{-(\eta-\zeta)}\psi^1_{\kappa,\gamma+\zeta}(\tau,t,s)\left(U^{\tau}_{(\kappa,\gamma),p,1}(\bfz;\eta,\zeta)+ \|\delta \bfz\|^{[s,t]}_{(\kappa,\gamma,\eta,\zeta),1}\right),\label{U1 bound}
\end{eqnarray}
where the quantity $\|\delta \bfz\|^{[s,t]}_{(\kappa,\gamma,\eta,\zeta),1}$ is defined as 
\begin{equation}\label{delta z in [s,t]}
\|\delta \bfz\|^{[s,t]}_{(\kappa,\gamma,\eta,\zeta),1}=
\sup_{s\le u<v\le t} 
\frac{|\delta _u\bfz^\tau_{vs}|}{|\tau-v|^{-(\eta-\zeta)}\psi_{(\kappa,\gamma+\zeta)}^1(\tau,v,s)}.
\end{equation}
In particular, for $\eta=\zeta=0$, we have 
\begin{equation}\label{eq:eta=zeta=0 bound}
    |\bfz_{ts}^\tau|\lesssim \psi^{1}_{(\kappa,\gamma)}(\tau,t,s)\left(U^{\tau}_{(\kappa,\gamma),p,1}(\bfz)+ \|\delta \bfz\|_{(\alpha,\gamma),1}\right),\end{equation} 
where $ \|\delta \bfz\|_{(\alpha,\gamma),1}$ is given by \eqref{delta zj norm 1}.
\end{lemma}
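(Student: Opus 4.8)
The plan is to reduce \eqref{U1 bound} to a weighted, two-variable version of the classical Garsia--Rodemich--Rumsey construction carried out for a genuine function, with a defect correction accounting for the non-additivity of the increment $\bfz^\tau$. Throughout, $\tau$ is frozen. Fixing $(s,t)\in\Delta_2^\tau$, I would set $g(v):=\bfz^\tau_{vs}$ for $v\in[s,t]$: this is continuous, vanishes at $v=s$, and satisfies $g(t)=\bfz^\tau_{ts}$, so that bounding $g(t)$ is exactly the goal. The point of this reformulation is the approximate-increment identity
\[
g(w)-g(v)=\bfz^\tau_{ws}-\bfz^\tau_{vs}=\bfz^\tau_{wv}+\delta_v\bfz^\tau_{ws},
\]
coming from Notation \ref{notation of delta g}, which separates the ``true'' increment $\bfz^\tau_{wv}$ (seen by $U$) from the defect $\delta_v\bfz^\tau_{ws}$, whose lower endpoint is precisely pinned at $s$, as in \eqref{delta z in [s,t]}.

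Next I would record the GRR data. Writing $\phi^\tau(w,v):=|\tau-w|^{-(\eta-\zeta)}\psi^1_{(\kappa,\gamma+\zeta)}(\tau,w,v)$ for the target weight, a direct inspection of \eqref{U1} shows that, with $\Psi(x)=x^{2p}$ and effective gauge $p_{\mathrm{eff}}(w,v):=\phi^\tau(w,v)\,|w-v|^{1/p}$, one has
\[
B:=\big(U^\tau_{(\kappa,\gamma),p,1}(\bfz;\eta,\zeta)\big)^{2p}=\iint_{\Delta_2^\tau}\Psi\!\left(\frac{|\bfz^\tau_{wv}|}{p_{\mathrm{eff}}(w,v)}\right)dv\,dw .
\]
Since the integrand is nonnegative (cf. Remark \ref{rem:increasing}), restricting to $\{s\le v<w\le t\}$ only decreases $B$. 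I would then run the recursive selection: build a decreasing sequence $t=t_0>t_1>t_2>\cdots\to s$ in $[s,t]$ by choosing at each stage, through the usual averaging/Chebyshev argument on the inner integral of $B$, a point $t_{n+1}$ with $|t_{n+1}-s|\simeq\tfrac12|t_n-s|$ for which both the local contribution to $B$ is controlled and the gauge-normalized increment obeys $|\bfz^\tau_{t_n t_{n+1}}|\lesssim p_{\mathrm{eff}}(t_n,t_{n+1})\,\Psi^{-1}\!\big(CB/|t_n-s|^2\big)$.

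Telescoping $g(t)=\sum_{n\ge0}\big(g(t_n)-g(t_{n+1})\big)$ and inserting the identity above yields
\[
|\bfz^\tau_{ts}|\le\sum_{n\ge0}|\bfz^\tau_{t_n t_{n+1}}|+\sum_{n\ge0}|\delta_{t_{n+1}}\bfz^\tau_{t_n s}| .
\]
The second sum is bounded by $\|\delta\bfz\|^{[s,t]}_{(\kappa,\gamma,\eta,\zeta),1}\sum_n\phi^\tau(t_n,s)$, which I expect to converge geometrically and be dominated by its first term $\phi^\tau(t_0,s)=\phi^\tau(t,s)$, using $\kappa>0$ and $|t_n-s|\downarrow0$ geometrically. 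The first sum is $\lesssim B^{1/2p}\sum_n\phi^\tau(t_n,t_{n+1})|t_n-t_{n+1}|^{1/p}|t_n-s|^{-1/p}$, a discretization of $\int_0^{t-s}\Psi^{-1}(CB/u^2)\,dp_{\mathrm{eff}}(u)$, which should collapse to $C\phi^\tau(t,s)B^{1/2p}=C\phi^\tau(t,s)\,U^\tau_{(\kappa,\gamma),p,1}(\bfz;\eta,\zeta)$. Adding the two series gives \eqref{U1 bound}. The particular case \eqref{eq:eta=zeta=0 bound} then follows by setting $\eta=\zeta=0$: there $|\tau-t|^{-(\eta-\zeta)}=1$, $\psi^1_{(\kappa,\gamma+\zeta)}=\psi^1_{(\kappa,\gamma)}$, $U^\tau_{(\kappa,\gamma),p,1}(\bfz;0,0)=U^\tau_{(\kappa,\gamma),p,1}(\bfz)$, and the localized norm $\|\delta\bfz\|^{[s,t]}_{(\kappa,\gamma,0,0),1}$ is dominated by the global one in \eqref{delta zj norm 1}.

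The main obstacle is the summation bookkeeping forced by the singular, non-translation-invariant weight $\phi^\tau$: unlike in the classical inequality it is not a function of $|w-v|$ alone, because of the factor $|\tau-w|^{-(\eta-\zeta)}$ and, inside $\psi^1_{(\kappa,\gamma+\zeta)}$, the competition $\big[\,|\tau-w|^{-(\gamma+\zeta)}|w-v|^\kappa\,\big]\wedge|w-v|^{\kappa-\gamma-\zeta}$ from \eqref{psi 1}. The delicate point is to show that when an interval is split the child weight is a geometrically small multiple of the parent weight, so that both series reassemble into the single factor $\phi^\tau(t,s)$, uniformly in $\tau$ and even as $t\uparrow\tau$. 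This is precisely where \eqref{relation kappa} enters: $\gamma<\kappa$ and $\zeta<\kappa-\gamma$ keep the effective exponent $\kappa-\gamma-\zeta>0$ positive, while the minimum interpolates between the regime $|w-v|\le|\tau-w|$, where $\phi^\tau\sim|\tau-w|^{-(\eta+\gamma)}|w-v|^\kappa$, and the regime $|w-v|\sim|\tau-w|$ near the endpoint $\tau$; I would treat these two regimes separately if needed. A secondary subtlety worth flagging is that the defect cannot simply be absorbed into $B$ and handed to a black-box GRR, since the resulting kernel $\phi^\tau(w,s)^{2p}\phi^\tau(w,v)^{-2p}|w-v|^{-2}$ is non-integrable near the diagonal; this is why it must be tracked through the recursion and summed geometrically rather than integrated.
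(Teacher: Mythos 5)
Your proposal is correct and follows essentially the same route as the paper's proof: the telescoping decomposition $\bfz^\tau_{ts}=\bfz^\tau_{s_{n+1}s}+\sum_k(\bfz^\tau_{s_ks_{k+1}}+\delta_{s_{k+1}}\bfz^\tau_{s_ks})$, the Chebyshev-type selection of $s_{k+1}$ in $(s,\tfrac{s_k+s}{2})$ avoiding the two bad sets, and the geometric summation driven by $\kappa-\gamma-\zeta>0$ together with $|\tau-s_k|\ge|\tau-t|$ are exactly the steps carried out there. The "delicate points" you flag (non-translation-invariance of the weight and the need to track the defect through the recursion rather than absorb it into $U$) are precisely how the paper handles them.
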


\begin{proof}
Consider a tuple $(s,t,\tau)\in \Delta_{3}$, with $t-s<\frac{T}{2}$. First construct a sequence of points $(s_{k})_{k\geq 0}$, such that $s_{k}\in[0,T]$ and $s_{k}$ convergs to $s$ by induction. Namely, set $s_{0}=t$, and suppose that $s_{0},s_{1},\ldots, s_{k}$ have been constructed, and let $D_{k}=(s, \frac{s_{k}+s}{2})$. Define the  function $I$ as follows: 
\begin{equation}\label{define of Iv}
I(w):=\int_{s}^{w}\frac{|\bfz^{\tau}_{wv}|^{2p}}{|\tau-w|^{-2p(\eta-\zeta)}|\psi^{1}_{(\kappa,\gamma+\zeta)}(\tau,w,v)|^{2p}|w-v|^{2}}dv.
\end{equation}
According to the value of $I$, define two subsets of the interval $D_{k}$:
\begin{eqnarray}
A_{k}&:=&\left\{w\in D_{k}\Big|\quad I(w)>\frac{4(U^{\tau}_{(\kappa,\gamma),p,1}\left(\bfz;\eta,\zeta\right))^{2p}}{|s_{k}-s|}\right\},
\label{Ak volterra}\\
B_{k}&:=&\left\{w\in D_{k}\Big|\quad \frac{|\bfz^{\tau}_{s_{k}w}|^{2p}}{|\tau-s_k|^{-2p(\eta-\zeta)}|\psi^{1}_{(\kappa,\gamma+\zeta)}(\tau,s_{k},w)|^{2p}|s_{k}-w|^{2}}>\frac{4I(s_{k})}{|s_{k}-s|}\right\},
\label{Bk volterra}
\end{eqnarray}
where we recall again that $\psi^{1}_{(\kappa, \gamma+\zeta)}(\tau,w,v)$ is given by \eqref{psi 1}. We claim that $A_{k}\cup B_{k}\subset D_{k}$, where the inclusion is strict. Toward proving this claim, observe that the set of $(v,w)$ such that
\[
s<v<w<\frac{s_{k}+s}{2}
\]
is included in $[0,T)^{2}$. Hence due to the definition \eqref{U1} of $U^{\tau}_{(\kappa,\gamma),p,1}(\bfz;\eta,\zeta)$ we get
\begin{equation}\label{relation between I and A Volterra}
\left(U^{\tau}_{(\kappa,\gamma),p,1}\left(\bfz;\eta,\zeta\right)\right)^{2p}\geq\int_{A_{k}}dv\,I(v).
\end{equation}
Therefore thanks to relation \eqref{Ak volterra} defining $A_{k}$, we get
\begin{equation}\label{relation U and Ak Volterra}
\left(U^{\tau}_{(\kappa,\gamma),p,1}(\bfz;\eta,\zeta)\right)^{2p}>\frac{4\left(U_{(\kappa,\gamma),p,1}^{\tau}(\bfz;\eta,\zeta)\right)^{2p}}{|s_{k}-s|}\mu(A_{k}),
\end{equation}
where $\mu(A_{k})$ denotes the Lebesgue measure of set $A_{k}$.
It is thus readily checked from \eqref{relation U and Ak Volterra} that 
\begin{equation}\label{upper bound for Ak Volterra}
\mu(A_{k})<\frac{|s_{k}-s|}{4}=\frac{\mu{(D_{k})}}{2} .
\end{equation}
Arguing similarly for the set $B_{k}$,  we  note that since the set $B_{k}$ defined by \eqref{Bk volterra} is a subset of $(s,s_{k})$, we have 
\begin{equation}\label{relation between Bk and I Volterra}
I(s_{k})\geq\int_{B_{k}}\frac{|\bfz^{\tau}_{s_{k}v}|^{2p}}{|\tau-s_k|^{-2p(\eta-\zeta)}|\psi^{1}_{(\kappa,\gamma+\zeta)}(\tau,s_{k},v)|^{2p}|s_{k}-v|^{2}}dv.
\end{equation}
Thus plugging the definition \eqref{Bk volterra} of $B_{k}$ in the right hand side of \eqref{relation between Bk and I Volterra}, we get
\begin{equation*}
I(s_{k})>\frac{4\mu(B_{k})}{|s_{k}-s|}I(s_{k}),
\end{equation*}
from which we obtain again that 
\begin{equation}\label{upper bound for Bk Volterra}
\mu(B_{k})<\frac{|s_{k}-s|}{4}=\frac{\mu{(D_{k})}}{2}
\end{equation}
Combining \eqref{upper bound for Ak Volterra} and \eqref{upper bound for Bk Volterra}, we have thus obtained
\begin{equation*}
\mu(A_{k})<\frac{\mu{(D_{k})}}{2},\quad\text{and}\quad\mu(B_{k})<\frac{\mu{(D_{k})}}{2},
\end{equation*}
and it follows that
\begin{equation}\label{relation A,B and D Volterra}
\mu(A_{k})+\mu(B_{k})<\mu(D_{k}),
\end{equation}
from which we easily deduce that $A_{k}\cup B_{k}$ is a strict subset of $D_{k}$.
Now we can choose $s_{k+1}$ arbitrarily in $D_{k} \setminus (A_{k}\cup B_{k})$. Summarizing our considerations so far; for all $n$ we have  constructed a family $\{s_{0},\ldots,s_{n}\}$ such that for all $0\le k \le n$, we have $0\le s_{k}-s \le \frac{t-s}{2^{k}}$ and the following two conditions are met:
\begin{eqnarray}\label{a1 Volterra}
 \frac{|\bfz^{\tau}_{s_{k}s_{k+1}}|^{2p}}{|\tau-s_k|^{-2p(\eta-\zeta)}|\psi^{1}_{(\kappa,\gamma+\zeta)}(\tau,s_{k},s_{k+1})|^{2p}|s_{k}-s_{k+1}|^{2}} 
 &\le& 
 \frac{4I(s_{k})}{|s_{k}-s|},
\\ 
I(s_{k+1}) 
&\le& 
\frac{4\left(U_{(\kappa,\gamma),p,1}^{\tau}(\bfz;\eta,\zeta)\right)^{2p}}{|s_{k}-s|}
. \notag
\end{eqnarray}
With \eqref{a1 Volterra} in hand,  decompose $\bfz^{\tau}_{ts}$ into 
\begin{equation}\label{decomposition of zj}
\bfz^{\tau}_{ts}=\bfz^{\tau}_{s_{n+1}s}+\sum_{k=0}^{n}\left(\bfz^{\tau}_{s_{k}s_{k+1}}+\delta_{s_{k+1}} \bfz^{\tau}_{s_{k}s}\right).
\end{equation}
The aim is now to  bound the term $\bfz^{\tau}_{s_{k}s_{k+1}}$ in \eqref{decomposition of zj}. To this aim, notice that since $s_{k+1}\not\in B_{k}$, we have 
\begin{equation}\label{relation zj and I}
\frac{|\bfz^{\tau}_{s_{k}s_{k+1}}|^{2p}}{|\tau-s_k|^{-2p(\eta-\zeta)}|\psi^{1}_{(\kappa,\gamma)}(\tau,s_{k},s_{k+1})|^{2p}|s_{k}-s_{k+1}|^{2}} \le 4\frac{I(s_{k})}{|s_{k}-s|}.
\end{equation}
Moreover, we also have $s_{k}\notin A_{k-1}$. Hence we obtain
\begin{equation}\label{relation I and U Volterra} 
I(s_{k})<\frac{4\left(U^{\tau}_{(\kappa,\gamma),p,1}(\bfz;\eta,\zeta)\right)^{2p}}{|s_{k-1}-s|}.
\end{equation}
Gathering \eqref{relation zj and I} and \eqref{relation I and U Volterra} yields 
\begin{multline}\label{225 Volterra}
\frac{|\bfz^{\tau}_{s_{k}s_{k+1}}|^{2p}}{|\tau-s_k|^{-2p(\eta-\zeta)}|\psi^{1}_{(\kappa,\gamma+\zeta)}(\tau,s_{k},s_{k+1})|^{2p}|s_{k}-s_{k+1}|^{2}}
\\
<\frac{16\left(U^{\tau}_{(\kappa,\gamma),p,1}(\bfz;\eta,\zeta)\right)^{2p}}{|s_{k}-s||s_{k-1}-s|}
\lesssim \frac{\left(U^{\tau}_{(\kappa,\gamma),p,1}(\bfz;\eta,\zeta)\right)^{2p}}{|s_{k}-s|^{2}},
\end{multline}
where we have used the fact that $|s_{k}-s|\leq|s_{k-1}-s|$ for the second inequality. In addition, thanks to $|s_{k}-s_{k+1}|\leq|s_k-s|$, it is easily seen that we can recast \eqref{225 Volterra} as
\begin{flalign}\label{upper bound for zj}
|\bfz^{\tau}_{s_{k}s_{k+1}}|&\lesssim U^{\tau}_{(\kappa,\gamma),p,1}(\bfz;\eta,\zeta)\, \psi^{1}_{(\kappa,\gamma+\zeta)}(\tau,s_{k},s) |\tau-s_k|^{-(\eta-\zeta)}.
\end{flalign}
Next recall that $\eta$ is assumed to be larger than $\zeta$, and in addition we assume that $0\le \zeta<\kappa-\gamma$. Thus owing to the fact that $|\tau-s_{k}|\ge |\tau-t|$, $|s_{k}-s|\lesssim2^{-k}(t-s)$, and recalling the expression~\eqref{psi 1} for $\psi^{1}$, we end up with 
\begin{align*}
\left|\bfz^{\tau}_{s_{k}s_{k+1}}\right|&\lesssim \frac{U^{\tau}_{(\kappa,\gamma),p,1}(\bfz;\eta,\zeta)}{2^{k(\kappa-\gamma-\zeta)}}\psi^{1}_{(\kappa,\gamma+\zeta)}(\tau,t,s) |\tau-t|^{-(\eta-\zeta)}
\end{align*}
Summing this inequality over $k$ (and using that $\kappa-\gamma-\zeta>0$), we get the following bound for the right hand side of~\eqref{decomposition of zj}:
\begin{equation}\label{upper bound for sum zj}
\left|\sum_{k=0}^{n}\bfz^{\tau}_{s_{k}s_{k+1}}\right|\lesssim U^{\tau}_{(\kappa,\gamma),p,1}(\bfz;\eta,\zeta)\,\psi^{1}_{(\kappa,\gamma+\zeta)}(\tau,t,s)|\tau-t|^{-(\eta-\zeta)}.
\end{equation}
Now we turn to bound the second term $\delta_{s_{k+1}} \bfz^{\tau}_{s_{k}s}$ in the right hand side of \eqref{decomposition of zj}. It is clear that  
\[
|\delta_{s_{k+1}} \bfz^{\tau}_{s_{k}s}|\lesssim \|\delta \bfz\|^{[s,t]}_{(\kappa,\gamma,\eta,\zeta),1}\,|\tau-t|^{-(\eta-\zeta)}\psi_{(\kappa,\gamma+\zeta)}^1(\tau,s_k,s),
\]
recalling that $\|\delta \bfz\|^{[s,t]}_{(\kappa,\gamma,\eta,\zeta),1}$ as given in \eqref{delta z in [s,t]}.
Hence similarly to \eqref{upper bound for sum zj}, we obtain
\begin{equation}\label{upper bound for sum delta zj}
|\sum_{k=0}^{n}\delta_{s_{k+1}} \bfz^{\tau}_{s_{k}s}|\lesssim \|\delta \bfz\|^{[s,t]}_{(\kappa,\gamma,\eta,\zeta),1}\,|\tau-t|^{-(\eta-\zeta)}\psi_{(\kappa,\gamma+\zeta)}^1(\tau,t,s).
\end{equation}
Plugging \eqref{upper bound for sum zj} and \eqref{upper bound for sum delta zj} into \eqref{decomposition of zj}, and letting $n\to \infty$, we get 
 relation~\eqref{U1 bound} thanks to the continuity of $\bfz$. This completes the proof.
\end{proof}
In preparation for the next proposition, we recall here a classical Sobolev embedding inequality. The particular form of the inequality stated here is  as a consequence of the classical  Garsia-Rodemich-Rumsey inequality \cite{GarsiaRodemichRumsey1970}, and can be found stated in the form below in \cite[pp. 2]{LeHu2013}. 
\begin{prop}\label{prop:GRR}
Let $h:[a,b]\rightarrow \RR^d$ be continuous. Then for any $p>\frac{1}{\alpha}$ the following inequality holds
\begin{equation}
    |h_{ts}|\lesssim_{\alpha,p} |t-s|^{\alpha}\left(\int_a^b\int_a^u \frac{|h_{uv}|^p}{|u-v|^{2+p\alpha}}dvdu\right)^{\frac{1}{p}}, 
\end{equation}
where we have set $h_{ts}=h_{t}-h_{s}$ for $(s,t)\in\Delta_{2}$.
\end{prop}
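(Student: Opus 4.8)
The plan is to deduce Proposition~\ref{prop:GRR} from the one-dimensional Garsia--Rodemich--Rumsey theorem by the standard choice of majorizing functions, so that the proposition is essentially a repackaging of the classical inequality rather than a genuinely new statement. Recall that the classical GRR theorem asserts the following: if $\Psi,\phi$ are continuous and strictly increasing on $[0,\infty)$ with $\Psi(0)=\phi(0)=0$ and $\Psi(\infty)=\infty$, then any continuous $h$ on $[a,b]$ with $\int_a^b\int_a^b \Psi(|h_u-h_v|/\phi(|u-v|))\,du\,dv=:B<\infty$ obeys $|h_t-h_s|\le 8\int_0^{|t-s|}\Psi^{-1}(4B/u^2)\,d\phi(u)$ for all $s,t\in[a,b]$.

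First I would symmetrize. Since the integrand $|h_{uv}|^p/|u-v|^{2+p\alpha}$ is symmetric in $(u,v)$, the triangle integral appearing in the statement equals half of the corresponding integral over the full square $[a,b]^2$, so it suffices to control the latter up to a harmless factor of $2$. Next I would specialize the GRR theorem to the choices $\Psi(x)=x^p$ and $\phi(u)=u^{\alpha+2/p}$; these satisfy the required monotonicity and vanishing hypotheses (using $\alpha,p>0$), and the choice is forced by the denominator exponent, since $\phi(|u-v|)^p=|u-v|^{2+p\alpha}$ makes $B$ equal to (twice) the integral in the proposition.

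With these choices the estimate reduces to a one-line computation: $\Psi^{-1}(y)=y^{1/p}$ gives $\Psi^{-1}(4B/u^2)=(4B)^{1/p}u^{-2/p}$, while $d\phi(u)=(\alpha+2/p)u^{\alpha+2/p-1}\,du$, so the integrand of the GRR bound is a constant multiple of $B^{1/p}u^{\alpha-1}$. The crucial cancellation is that the factor $u^{-2}$ built into the GRR conclusion is absorbed precisely by the extra $2/p$ in the exponent of $\phi$, leaving $\int_0^{|t-s|}u^{\alpha-1}\,du=\alpha^{-1}|t-s|^{\alpha}$. This last integral converges exactly because $\alpha>0$ (which is in particular guaranteed by the hypothesis $p>1/\alpha$), and this convergence, together with collecting the numerical factors into a single constant $C_{\alpha,p}$, is the only genuine analytic point in the argument; the main (and essentially only) subtlety is thus the bookkeeping of exponents, not any deep obstruction.

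Finally, I would remark that the result can alternatively be recovered as the degenerate case of the Volterra estimate already established in Lemma~\ref{GRR lemma volterra}: taking $\kappa=\alpha$ and $\gamma=\eta=\zeta=0$ (which is admissible under \eqref{relation kappa} when $\alpha<1$) reduces $\psi^{1}_{(\alpha,0)}(\tau,t,s)$ to $|t-s|^{\alpha}$ and $U^{\tau}_{(\alpha,0),p,1}$ to the integral functional above, while for a genuine increment $h_{ts}=h_t-h_s$ one has $\delta_u h_{ts}=0$, so the correction term $\|\delta \bfz\|_{(\alpha,0),1}$ in~\eqref{eq:eta=zeta=0 bound} vanishes and that bound collapses exactly to the claimed inequality (after relabelling the integrability exponent). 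This gives a useful consistency check that the Volterra GRR inequality specializes to the classical one, and justifies stating the latter here as a known ingredient.
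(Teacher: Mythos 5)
The paper does not actually prove Proposition~\ref{prop:GRR}: it is stated as a known consequence of the classical Garsia--Rodemich--Rumsey inequality and justified only by a citation (to \cite{GarsiaRodemichRumsey1970} and to the precise form in \cite{LeHu2013}). Your proposal therefore supplies a proof where the paper supplies none, and the proof is correct. The specialization $\Psi(x)=x^{p}$, $\phi(u)=u^{\alpha+2/p}$ is exactly the standard one: the symmetrization step is legitimate since the integrand is symmetric in $(u,v)$, the exponent bookkeeping $\Psi^{-1}\!\left(4B/u^{2}\right)\,d\phi(u)\asymp B^{1/p}u^{\alpha-1}\,du$ is right, and $\int_{0}^{|t-s|}u^{\alpha-1}\,du=\alpha^{-1}|t-s|^{\alpha}$ closes the argument; as you note, only $\alpha>0$ is genuinely used, the hypothesis $p>1/\alpha$ being the regime in which the right-hand side is finite for H\"older paths rather than a condition needed for the inequality itself. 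Your second observation --- that the statement is also the degenerate case $\kappa=\alpha$, $\gamma=\eta=\zeta=0$ of Lemma~\ref{GRR lemma volterra}, with the $\|\delta\bfz\|$ term vanishing because $\delta_{u}h_{ts}=0$ for a genuine path increment --- is a valid and non-circular consistency check (the Lemma is proved before and independently of Proposition~\ref{prop:GRR}), though it only covers $\alpha<1$ and requires reading the one-parameter path as a $\tau$-independent Volterra increment. Either route is acceptable; the direct GRR specialization is the cleaner one to record if a proof is wanted.
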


We follow up with a technical lemma, combining Proposition \ref{prop:GRR} with Lemma \ref{GRR lemma volterra}. 
\begin{lemma}\label{lem: rec inc bound}
Let $\bfz:\Delta_3\rightarrow \RR^d$ be continuous. Consider four parameters $\alpha,\gamma\in (0,1)$, $\eta, \zeta\in [0,1]$ that satisfy the relation \eqref{condition parameters}. Recall that $\psi^{1,2}$ is defined by \eqref{psi 12} and the quantities $U$ are introduced in Definition \ref{U12}. Then for any $\alpha-\gamma>\frac{1}{p}$, the following inequality holds for any $(s,t,\tau^{\prime},\tau)\in \Delta^{T}_4,$ 
\begin{multline}
\label{bound}
 \left(  \frac{|\bfz^{\tau\tau'}_{ts}|}{\psi^{1,2 }_{\alpha,\gamma,\eta,\zeta}(\tau,\tau',t,s)}\right)^{2p}
  \lesssim  
    U^T_{(\alpha,\gamma,\eta,\zeta),p,1,2}(\bfz)
\\    + \int_{\tau'}^{\tau}\int_{\tau'}^{r} \sup_{0\le s<u<v\le t}\frac{|\delta_u\bfz_{vs}^{rr'}|^{2p}}{\psi_{(\alpha,\gamma,\eta,\zeta)}^{1,2}(r,r',v,s)^{2p}|r-r'|^{2}}\,dr^{\prime}\,dr  .
\end{multline}
\end{lemma}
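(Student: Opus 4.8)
The plan is to prove \eqref{bound} by applying a one–dimensional Garsia--Rodemich--Rumsey inequality twice: once in the upper variable $\tau$ (to produce the $|\tau-\tau'|^{\eta}$ regularity and the $U^{T}_{\dots,1,2}$ functional) and once in the lower pair $(s,t)$ (to produce the defect term $\delta_u\bfz^{rr'}$). This is the ``rectangular'' analogue of the single–parameter argument carried out in Lemma \ref{GRR lemma volterra}.

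First I would freeze $(s,t)$ and regard $h_r:=\bfz^{r}_{ts}$ as a function of the single variable $r\in[\tau',\tau]$, so that $h_\tau-h_{\tau'}=\bfz^{\tau\tau'}_{ts}$. Applying Proposition \ref{prop:GRR} on $[\tau',\tau]$ with Hölder exponent $\eta$ and integrability power $2p$ (admissible for $p$ large enough, as guaranteed by $\alpha-\gamma>\frac1p$ together with the standing constraints on the exponents) and raising to the power $2p$ gives
\[
|\bfz^{\tau\tau'}_{ts}|^{2p}\lesssim |\tau-\tau'|^{2p\eta}\int_{\tau'}^{\tau}\int_{\tau'}^{r}\frac{|\bfz^{rr'}_{ts}|^{2p}}{|r-r'|^{2+2p\eta}}\,dr'\,dr.
\]
The key bookkeeping is to divide by $\psi^{1,2}_{(\alpha,\gamma,\eta,\zeta)}(\tau,\tau',t,s)^{2p}$ using the factorization \eqref{relation psi 1 and 12}: writing $\psi^{1,2}(\tau,\tau',t,s)=|\tau-\tau'|^{\eta}W(\tau')$ with $W(r'):=|r'-t|^{-(\eta-\zeta)}\psi^{1}_{(\alpha,\gamma+\zeta)}(r',t,s)$, I would observe that $r\mapsto W(r)$ is non-increasing on $[t,T]$ (since $\eta-\zeta\ge0$ and $\psi^{1}_{(\alpha,\gamma+\zeta)}(\cdot,t,s)$ is non-increasing in its first slot), so $W(r')\le W(\tau')$ for all $r'\ge\tau'$. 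Substituting $\psi^{1,2}(r,r',t,s)^{2p}=|r-r'|^{2p\eta}W(r')^{2p}$ into the integrand and using $W(r')/W(\tau')\le1$ yields the fixed-$(s,t)$ bound
\[
\left(\frac{|\bfz^{\tau\tau'}_{ts}|}{\psi^{1,2}_{(\alpha,\gamma,\eta,\zeta)}(\tau,\tau',t,s)}\right)^{2p}\lesssim \int_{\tau'}^{\tau}\int_{\tau'}^{r}\frac{|\bfz^{rr'}_{ts}|^{2p}}{\psi^{1,2}_{(\alpha,\gamma,\eta,\zeta)}(r,r',t,s)^{2p}\,|r-r'|^{2}}\,dr'\,dr.
\]

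For the second step I would, for each frozen pair $(r,r')$, apply Lemma \ref{GRR lemma volterra} to the increment $(v,s)\mapsto\bfz^{rr'}_{vs}$ with $\kappa=\alpha$ and with $r'$ playing the role of the upper variable. Because the prefactor $|r-r'|^{\eta}$ in \eqref{relation psi 1 and 12} is constant in the lower variables it cancels between numerator and denominator, and after raising \eqref{U1 bound} to the power $2p$ one matches the $U^{r'}_{(\alpha,\gamma),p,1}(\bfz^{rr'};\eta,\zeta)$ term with a lower-variable integral of $|\bfz^{rr'}_{wv}|^{2p}/(\psi^{1,2}(r,r',w,v)^{2p}|w-v|^{2})$ over $0<v<w<r'$ (via \eqref{U1} and \eqref{relation psi 1 and 12}), and the term $\|\delta\bfz^{rr'}\|^{[s,t]}_{(\alpha,\gamma,\eta,\zeta),1}$ of \eqref{delta z in [s,t]} with $\sup_{s\le u<v\le t}|\delta_u\bfz^{rr'}_{vs}|^{2p}/\psi^{1,2}(r,r',v,s)^{2p}$. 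Inserting this into the fixed-$(s,t)$ bound and integrating in $(r',r)$, the integral term becomes a four-fold integral over $\{0<v<w<r'<r<\tau\}\subset\Delta_4^{T}$ carrying exactly the weight of \eqref{U12}, hence is bounded by $U^{T}_{(\alpha,\gamma,\eta,\zeta),p,1,2}(\bfz)^{2p}$ after enlarging the domain; the defect term is dominated by the larger supremum $\sup_{0\le s<u<v\le t}$ appearing in \eqref{bound}. This produces \eqref{bound}.

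I expect the main obstacle to be precisely the weight bookkeeping in the middle paragraph: factoring $\psi^{1,2}$ through \eqref{relation psi 1 and 12}, verifying the monotonicity of $W$ so that the $\tau'$–normalization can be absorbed, and checking that the two distinct normalizations—the $|r-r'|^{2+2p\eta}$ coming from the upper GRR and the $\psi^{1,2}$–weights coming from the lower GRR—recombine into exactly the integrand of \eqref{U12} with no spurious surviving powers of $|\tau'-t|$ or $|r-r'|$. The two analytic inputs (Proposition \ref{prop:GRR} and Lemma \ref{GRR lemma volterra}) are used as black boxes; the difficulty is entirely in aligning the singular two–parameter weights.
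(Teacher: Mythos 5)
Your proposal is correct and follows the paper's proof essentially step for step: Proposition \ref{prop:GRR} in the upper variable, the factorization \eqref{relation psi 1 and 12} combined with the monotonicity of $r'\mapsto |r'-t|^{-(\eta-\zeta)}\psi^{1}_{(\alpha,\gamma+\zeta)}(r',t,s)$ to transfer the normalization from $\tau'$ to $r'$, and then Lemma \ref{GRR lemma volterra} applied for each frozen pair $(r,r')$ to split the result into the $U^{T}_{(\alpha,\gamma,\eta,\zeta),p,1,2}$ term and the defect term. The weight bookkeeping you flag as the main obstacle is resolved exactly as you describe, so there is nothing to add.
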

\begin{proof}
First, since $\bfz$ is continuous, we apply Proposition \ref{prop:GRR} to the increment $\bfz_{ts}^\tau-\bfz_{ts}^{\tau'}$, and we get 
\begin{equation}\label{z12 bound}
\frac{\left|\bfz^{\tau\tau^{\prime}}_{ts}\right|}{|\tau^{\prime}-\tau|^{\eta}}\lesssim \left(\int_{\tau^{\prime}}^{\tau}\int_{\tau^{\prime}}^{r}\frac{\left|\bfz^{rr^{\prime}}_{ts}\right|^{2p}}{|r-r^{\prime}|^{2+2p\eta}}dr^{\prime}dr\right)^{1/2p}.
\end{equation}
Moreover, let us write again relation \eqref{relation psi 1 and 12} for the reader's convenience:  
\begin{equation}\label{relation psi 1 and psi 12}
\psi^{1,2}_{\alpha,\gamma,\eta,\zeta}(\tau,\tau^{\prime},t,s)=\left|\tau-\tau^{\prime}\right|^{\eta}\left|\tau^{\prime}-t\right|^{-(\eta-\zeta)}\psi^{1}_{(\alpha,\gamma+\zeta)}(\tau^{\prime},t,s).
\end{equation}
Plugging \eqref{relation psi 1 and psi 12} in \eqref{z12 bound}, we end up with
\begin{equation}\label{228}
\left( \frac{|\bfz^{\tau\tau'}_{ts}|}{\psi^{1,2 }_{(\alpha,\gamma,\eta,\zeta)}(\tau,\tau',t,s)}\right)^{2p}\lesssim I(\tau,\tau',t,s),
 \end{equation}
 where we have set 
\begin{equation*}
I(\tau,\tau',t,s)=\int_{\tau^{\prime}}^{\tau}\int_{\tau^{\prime}}^{r}\frac{\left|\bfz^{rr^{\prime}}_{ts}\right|^{2p}}{|\tau^{\prime}-t|^{-2p(\eta-\zeta)}\left|\psi^{1}_{(\alpha,\gamma+\zeta)}(\tau^{\prime},t,s)\right|^{2p}|r-r^{\prime}|^{2+2p\eta}}dr^{\prime}\,dr.
\end{equation*}
Invoking the fact that $t\le\tau^{\prime}\le r^{\prime}\le \tau$ and  $\eta-\zeta\geq 0$ we have  $|\tau'-t|^{\eta-\zeta}\leq |r'-t|^{\eta-\zeta}$. Hence it immediately follows that  
\begin{equation}\label{upper bound for I}
  I(\tau,\tau',t,s)\lesssim \int_{\tau'}^{\tau}\int_{\tau'}^{r}  \frac{|\bfz^{rr'}_{ts}|^{2p}}{|r'-t|^{-2p(\eta-\zeta)}\left|\psi^1_{(\alpha,\gamma+\zeta)}(r',t,s)\right|^{2p}|r-r'|^{2+2p\eta}}\,dr' \,dr.
\end{equation}
We thus fix $r$ and apply inequality \eqref{U1 bound}  to the Volterra path $(r^{\prime},t,s)\mapsto \bfz^{rr^{\prime}}_{ts}$. We get
\begin{equation}\label{zrr' upper bound}
\left|\bfz^{rr^{\prime}}_{ts}\right|\lesssim \left|r^{\prime}-t\right|^{-(\eta-\zeta)}\psi^{1}_{(\alpha,\gamma+\zeta)}\left(r^{\prime},t,s\right)\left(U^{r^{\prime}}_{(\alpha,\gamma),p,1}(\bfz^{r,\cdot}; \eta,\zeta)+\|\delta\bfz^{r,\cdot}\|^{[s,t]}_{(\alpha,\gamma,\eta,\zeta),1}\|\right).
\end{equation}
We now plug \eqref{zrr' upper bound} into \eqref{upper bound for I}, recall the definition \eqref{U1} of $U^{r^{\prime}}$, resort to~\eqref{relation psi 1 and 12} again and use the expression of \eqref{delta z in [s,t]} for $\|\delta\bfz^{r,\cdot}\|^{[s,t]}_{(\alpha,\gamma,\eta,\zeta),1}$. We end up with 
\begin{equation}\label{I wrt I1 and I2}
I(\tau,\tau^{\prime},t,s)\lesssim I_{1}(\tau,\tau^{\prime})+I_{2}(\tau,\tau^{\prime},t,s),\end{equation}
where $I_{1}$ and $I_{2}$ are respectively given by 
\begin{align*}
     I_{1}(\tau,\tau')=\int_{\tau'}^{\tau}\int_{\tau'}^{r}  \int_0^{r'}\int_0^{v} \frac{|\bfz^{rr'}_{vu}|^{2p}}{ \left|\psi^{1,2}_{(\alpha,\gamma,\eta,\zeta)}(r,r',v,u)\right|^{2p}|v-u|^2|r-r'|^{2}}\,du\,dv\,dr'\,dr,    \\
I_{2}(\tau,\tau',t,s)=\int_{\tau'}^{\tau}\int_{\tau'}^{r} \sup_{0\le s<u<v\le t}\frac{|\delta_u\bfz_{vs}^{rr'}|^{2p}}{\left|\psi_{(\alpha,\gamma+\zeta)}^1(r',v,s)\right|^{2p}|r'-v|^{-2p(\eta-\zeta)}|r-r'|^{2+2p\eta}}\,dr'\,dr.
\end{align*}
Going back to \eqref{U12}, it is now readily checked that 
\begin{equation}\label{I1 upper bound}
     I_{1}(\tau,\tau^{\prime})\le \left(U^{\tau}_{(\alpha,\gamma,\eta,\zeta),p,1,2}\left(\bfz\right)\right)^{2p}\le \left(U^{T}_{(\alpha,\gamma,\eta,\zeta),p,1,2}\left(\bfz\right)\right)^{2p}.
\end{equation}
Furthermore, another application of \eqref{relation psi 1 and 12} reveals that 
\begin{equation}\label{I2 upper bound}
 I_{2}(\tau,\tau',t,s)=\int^{\tau}_{\tau^{\prime}}\int^{r}_{\tau^{\prime}} \sup_{0\le s<u<v\le t}\frac{|\delta_{u} \bfz^{rr'}_{vs}|^{2p}}{ \psi^{1,2}_{(\alpha,\gamma,\eta,\zeta)}(r,r',v,u)^{2p}|r-r'|^{2}}\,dr' \,dr.
\end{equation}
Plugging \eqref{I1 upper bound}-\eqref{I2 upper bound} into \eqref{I wrt I1 and I2} and then back to in \eqref{228}, this achieves the proof of our claim~\eqref{bound}.
\end{proof}

Now we will combine Lemma \ref{GRR lemma volterra} and 
\ref{lem: rec inc bound} to obtain a modified Garsia-Rodemich-Rumsey inequality tailored to Volterra rough paths. 
\begin{thm}\label{Volterra GRR inequality}
Let $\bfz:\Delta_3\rightarrow \RR^d$. For $\alpha,\gamma\in (0,1)$, $\eta, \zeta\in [0,1]$ satisfy the relation \eqref{condition parameters}, we assume that $\delta \bfz \in \cv^{(\alpha,\gamma,\eta,\zeta)}$ where $\cv^{(\alpha,\gamma,\eta,\zeta)}$ is introduced in Definition \ref{def of delta norm}.
  Suppose $\kappa\in (0,\alpha)$. Then for any $p> \frac{1}{\alpha-\kappa} \vee \frac{1}{\zeta}$, the following two bounds holds: 
\begin{align}\label{eq:bound 1}
    \|\bfz\|_{(\kappa,\gamma),1}&\lesssim U^T_{(\kappa,\gamma),1,p}(\bfz) +\|\delta \bfz\|_{(\kappa,\gamma),1}, 
    \\\label{eq:bound 2}
     \|\bfz\|_{(\kappa,\gamma,\eta,\zeta),1,2}&\lesssim U^T_{(\kappa,\gamma,\eta,\zeta),1,2,p}(\bfz) 
     +\|\delta\bfz\|_{\lp\kappa,\gamma,\eta+\frac{1}{p},\zeta+\frac{1}{p}\rp,1,2} \, T^{2+\alpha-\kappa-\frac{1}{p}}.
\end{align}
   
\end{thm}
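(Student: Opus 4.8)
The plan is to derive the two bounds by combining the two preceding lemmas, treating the $1$-norm and the $1,2$-norm separately. For the first bound \eqref{eq:bound 1}, I would apply Lemma \ref{GRR lemma volterra} with $\kappa$ in the role of the regularity exponent and with $\eta=\zeta=0$, which is precisely the specialized inequality \eqref{eq:eta=zeta=0 bound}. This gives, for every $(s,t,\tau)\in\Delta_3$,
\[
\frac{|\bfz^\tau_{ts}|}{\psi^1_{(\kappa,\gamma)}(\tau,t,s)}\lesssim U^\tau_{(\kappa,\gamma),p,1}(\bfz)+\|\delta\bfz\|_{(\kappa,\gamma),1}.
\]
Taking the supremum over $\Delta_3$ on the left produces $\|\bfz\|_{(\kappa,\gamma),1}$ by the definition \eqref{V norm}, while on the right I use Remark \ref{rem:increasing} to replace $U^\tau$ by the monotone bound $U^T_{(\kappa,\gamma),p,1}(\bfz)$. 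The constraint $p>\tfrac{1}{\alpha-\kappa}$ is what guarantees the integral functional is finite for a path of regularity $\alpha$, via the embedding in Remark \ref{volterra embedding}; this is why we shift from $\alpha$ to $\kappa<\alpha$.

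For the second bound \eqref{eq:bound 2}, the starting point is Lemma \ref{lem: rec inc bound}, applied with $\kappa$ in place of $\alpha$, which yields
\[
\left(\frac{|\bfz^{\tau\tau'}_{ts}|}{\psi^{1,2}_{(\kappa,\gamma,\eta,\zeta)}(\tau,\tau',t,s)}\right)^{2p}\lesssim U^T_{(\kappa,\gamma,\eta,\zeta),p,1,2}(\bfz)+\int_{\tau'}^{\tau}\int_{\tau'}^{r}\sup_{0\le s<u<v\le t}\frac{|\delta_u\bfz^{rr'}_{vs}|^{2p}}{\psi^{1,2}_{(\kappa,\gamma,\eta,\zeta)}(r,r',v,s)^{2p}|r-r'|^2}\,dr'\,dr.
\]
The first term already has the desired form after using monotonicity. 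The work lies in the second (remainder) term: I would bound the supremum using the assumption $\delta\bfz\in\cv^{(\alpha,\gamma,\eta+\frac1p,\zeta+\frac1p)}$, which means $|\delta_u\bfz^{rr'}_{vs}|\lesssim \|\delta\bfz\|_{(\alpha,\gamma,\eta+\frac1p,\zeta+\frac1p),1,2}\,\psi^{1,2}_{(\alpha,\gamma,\eta+\frac1p,\zeta+\frac1p)}(r,r',v,s)$ by the definition \eqref{delta zj norm 12}. The point of inflating $\eta\mapsto\eta+\tfrac1p$ and $\zeta\mapsto\zeta+\tfrac1p$ is exactly to produce two extra powers (one in $|r-r'|$ and one in $|r'-v|$, equivalently $|r'-t|$) that absorb the singular $|r-r'|^{-2}$ weight and make the double integral over $[\tau',\tau]^2$ converge, at the cost of the factor $T^{2+\alpha-\kappa-\frac1p}$ coming from integrating the surviving positive powers over $[0,T]$.

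The main obstacle will be the careful bookkeeping in this second term: I must compare $\psi^{1,2}_{(\alpha,\gamma,\eta+\frac1p,\zeta+\frac1p)}$ in the numerator against $\psi^{1,2}_{(\kappa,\gamma,\eta,\zeta)}$ in the denominator and verify that their ratio, after integration against $|r-r'|^{-2}$, produces a convergent integral yielding exactly the claimed power of $T$. Using the factorization \eqref{relation psi 1 and 12}, the ratio of the $\psi^{1,2}$ factors splits into a product involving $|r-r'|^{(\eta+\frac1p)-\eta}=|r-r'|^{\frac1p}$, a factor $|r'-v|$-type term from the shifted $(\eta-\zeta)$ exponent, and the ratio $\psi^1_{(\alpha,\gamma+\zeta+\frac1p)}/\psi^1_{(\kappa,\gamma+\zeta)}$; the latter contributes a positive power $|v-u|^{\alpha-\kappa-\frac1p}$ after invoking the embedding of Remark \ref{volterra embedding}, explaining both the need for $p>\tfrac1{\alpha-\kappa}$ and the appearance of $\alpha-\kappa-\tfrac1p$ in the exponent of $T$. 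The requirement $p>\tfrac1\zeta$ ensures $\zeta+\tfrac1p\le 1$ so that $\delta\bfz$ lies in an admissible Volterra space with the shifted parameters. After raising both sides to the power $\tfrac1{2p}$, taking the supremum over $\Delta_4$, and collecting the $T$-dependent constants, I obtain \eqref{eq:bound 2}, which completes the proof.
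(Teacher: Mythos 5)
Your proposal is correct and follows essentially the same route as the paper: bound \eqref{eq:bound 1} is obtained exactly as you describe from \eqref{eq:eta=zeta=0 bound} plus the monotonicity of $\tau\mapsto U^\tau$, and bound \eqref{eq:bound 2} from Lemma \ref{lem: rec inc bound} by controlling the remainder with $\|\delta\bfz\|_{(\alpha,\gamma,\eta+\frac1p,\zeta+\frac1p),1,2}$ and the ratio of $\psi^{1,2}$ functions via the factorization \eqref{relation psi 1 and 12}, yielding $|r-r'|^{2}$ to cancel the singular weight and $|v-s|^{2p(\alpha-\kappa)-2}\le T^{2p(\alpha-\kappa)-2}$ from $p>\frac{1}{\alpha-\kappa}$, which is precisely \eqref{eq:dint delta}--\eqref{eq:frac bound}. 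The only cosmetic slips (no $|r'-v|$ contribution actually arises, since the exponent $\eta-\zeta$ is unchanged by the simultaneous shift, and your stated justification for $p>\frac1\zeta$ is not quite what that hypothesis does) do not affect the argument.
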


\begin{proof}
We begin by proving \eqref{eq:bound 1}. It follows directly from \eqref{eq:eta=zeta=0 bound} that for any $0<\kappa<\alpha$
\begin{equation*}
    |\bfz_{ts}^\tau|\lesssim \psi^{1}_{(\kappa,\gamma)}(\tau,t,s)\left(U^{\tau}_{(\kappa,\gamma),p,1}(\bfz)+ \|\delta \bfz\|_{(\alpha,\gamma),1}\right).
    \end{equation*} 
Using that $\tau\mapsto U^\tau$ is increasing (see Remark \ref{rem:increasing}) and taking supremum over $\tau$ on the right hand side above, it is easily seen that \eqref{eq:bound 1} holds. We now move on to prove \eqref{eq:bound 2}. To this aim, we shall spell out the right hand side of \eqref{bound} in a slightly different way. Namely note that for $\delta\bfz\in \cv^{(\alpha,\gamma,\eta,\zeta)}$ and $\eta<\eta^{\prime}$, we have  
\begin{multline}\label{eq:dint delta}
\int_{\tau'}^{\tau}\int_{\tau'}^{r} \sup_{0\le s<u<v\le t}\frac{|\delta_u\bfz_{vs}^{rr'}|^{2p}}{\psi_{(\kappa,\gamma,\eta,\zeta)}^{1,2}(r,r',v,s)^{2p}|r-r'|^{2}}\,dr' \,dr\\
\lesssim 
\|\delta \bfz\|_{\lp \alpha,\gamma,\eta+\frac{1}{p},\zeta+\frac{1}{p}\rp,1,2}^{2p} 
\int_{\tau'}^{\tau}\int_{\tau'}^{r} \sup_{0\le s<u<v\le t}\frac{\psi_{(\alpha,\gamma,\eta+\frac{1}{p},\zeta+\frac{1}{p})}^{1,2}(r,r',v,s)^{2p}}{\psi_{(\kappa,\gamma,\eta,\zeta)}^{1,2}(r,r',v,s)^{2p}|r-r'|^{2}}\,dr'\,dr, 
\end{multline}
where we have used the Definition \ref{V norm b} of the $(1,2)$-norm.
Furthermore, since we have assumed $p>\frac{1}{\alpha-\kappa}$ and $s,v\in [0,T]$ it is readily checked that   
\begin{equation}\label{eq:frac bound}
\frac{\psi_{(\alpha,\gamma,\eta+\frac{1}{p},\zeta+\frac{1}{p})}^{1,2}(r,r',v,s)^{2p}}{\psi_{(\kappa,\gamma,\eta,\zeta)}^{1,2}(r,r',v,s)^{2p}|r-r'|^{2}} 
 \lesssim |v-s|^{2p(\alpha-\kappa)-2}\le T^{2p(\alpha-\kappa)-2}.
\end{equation}
 Hence the right hand side of \eqref{bound} can be upper bounded by 
 \[
 C_{T,p,\alpha,\kappa} \|\delta \bfz\|^{2p}_{(\alpha,\gamma,\eta+\frac{1}{p},\zeta+\frac{1}{p}),1,2}.
 \]
 Plugging this information into \eqref{bound}, the proof of \eqref{eq:bound 2} is now easily achieved.
\end{proof}

\section{Volterra rough path driven by fractional Brownian motion}\label{Volterra rough path driven by fractional Brownian motion}
In this section, we are going to construct the Volterra rough path driven by a fractional Brownian motion with Hurst parameter $H>1/2$. As mentioned in the introduction, this regime leads to nontrivial rough paths development in the Volterra case, due to the singularity of the kernel $k$ in~\eqref{eq:intro Volterra}. Indeed, this singularity pushes down the overall regularity of the Volterra path, so that a singularity of order $\gamma$ yields a regularity  $H-\gamma$ of the volterra path constructed from the fBM (which is thus allowed to be smaller than $\frac{1}{2}$). 

 Let us first recall some basic facts about the stochastic calculus of variations with respect to fractional Brownian motion.

\subsection{Malliavin calculus preliminaries}\label{Malliavin calculus preliminaries}
This section is devoted to review some elementary information on Malliavin calculus (mostly borrowed from \cite{Nualart}) that we will use in Section \ref{first level of the Volterra rough path} and Section \ref{second level of the Volterra rough path}. We first introduce the notation for our main process of interest.  

 \begin{notation}\label{fBm}
In the sequel we denote by $B=\left\{(B^{1}_{t},\ldots,B^{m}_{t}),\,\, t\in [0,T]\right\}$ a standard $m$-dimensional fractional Brownian motion with Hurst parameter $H\in\left(1/2,1\right)$. Recall that $B$ is a centered Gaussian process with independent coordinates. For each component $B^{i}$, the covariance function $R$ is defined by 
\begin{equation}\label{covariance for B}
R(s,t)=\frac{1}{2}\left(|t|^{2H}+|s|^{2H}-|t-s|^{2H}\right).
\end{equation}
\end{notation}
We now say a few words about Cameron-Martin type spaces related to each component $B^{i}$ in Notation \ref{fBm}. Namely let $\mathcal{H}$ be the Hilbert space defined as the closure of the set of step functions on the interval $[0,T]$ with respect to the scalar product
\begin{equation*}
\langle \mathbbm{1}_{[0,t]},\mathbbm{1}_{[0,s]}\rangle_{\mathcal{H}}=\frac{1}{2}\left(t^{2H}+s^{2H}-|t-s|^{2H}\right).
\end{equation*}
Under the assumption $H>1/2$, it is easy to see that the covariance of the fBm \eqref{covariance for B} can be written as 
\begin{equation*}
R(s,t)=a_{H}\int_{0}^{t}\int_{0}^{s}|u-v|^{2H-2}dudv,
\end{equation*}
where the constant $a_{H}$ is defined by $a_{H}=H(2H-1)$. This implies that 
\begin{equation}\label{inner product}
\langle f, g\rangle_{\mathcal{H}}=a_{H}\int_{0}^{T}\int_{0}^{T}f_{u}g_{v}|u-v|^{2H-2}dudv,
\end{equation}
for any pair of step functions $f$ and $g$ on $[0,T]$. Therefore $\mathcal{H}$ can also be seen as the completion of step functions with respect to the inner product \eqref{inner product}. We now introduce a family of additional spaces $\mathcal{|H|}^{\otimes l}$ which will be useful for our computations. Namely for $l\ge 1$ we define $\mathcal{|H|}^{\otimes l}$ as the linear space of measurable functions $f$ on $[0,T]^{l}\subset \mathbb{R}^{l}$ such that
\begin{equation}\label{Hl norm}
\|f\|^{2}_{|\mathcal{H}|^{\otimes l}}:=a^{l}_{H}\int_{[0,T]^{2l}}|f_{\bfu}||f_{\bfv}||u_{1}-v_{1}|^{2H-2}\cdots|u_{l}-v_{l}|^{2H-2}d\bfu d\bfv<\infty,
\end{equation}
where we write $\bfu=(u_{1},\cdots, u_{l}),\, \, \bfv=(v_{1},\ldots, v_{l})\in [0,T]^{l}$. Notice that $\mathcal{|H|}^{\otimes l}$ is a subset of $\mathcal{H}^{\otimes l}$. The main interest of the spaces $|\ch|$ is due to the fact that while $\mathcal{H}^{\otimes l}$ contains distributions, the space $\mathcal{|H|}^{\otimes l}$ is a space of functions. 

For each component $B^{i}$, the mapping $\mathbbm{1}_{[0,t]} \mapsto B^{i}_{t}$ can be extended to a linear isometry between $\mathcal{H}$ and the Gaussian space spanned by $B^{i}$. We denote this isometry by $h\mapsto B^{i}(h)$. In this way, $\{B^{i}(h), h\in \mathcal{H}\} $ is an isonormal Gaussian process indexed by the Hilbert space $\mathcal{H}$. Namely, we have
\begin{equation}\label{isometry}
\mathbb{E}\left[B^{i}(f)\,B^{i}(g)\right]=\langle f,g\rangle_{\mathcal{H}}.
\end{equation}
It is also worth mentioning that the Wiener integral can be approximated by Riemann type sums. Namely for $h\in\mathcal{H}$ the following limit holds true in $L^{2}(\Omega)$:
\begin{equation}\label{def of wiener}
B^{i}(h)=\lim_{|\mathcal{P}|\to 0}\sum_{[r,v]\in\mathcal{P}}B^{i}_{vr} \, h({r}),
\end{equation}
where the Riemann sum is written similarly to \eqref{convolution in 1d} and we recall that $B^{i}_{vr}=B^{i}_{v}-B^{i}_{r}$.

Let $\mathcal{S}$ be the set of smooth and cylindrical random variables of the form 
\[
F=f(B_{s_{1}},\ldots,B_{s_{N}}),
\]
where $N\ge 1$ and $f\in C_{b}^{\infty}(\mathbb{R}^{m\times N})$. For each $j=1,\ldots, m$ and $t\in [0,T]$, the partial Malliavin derivative of $F$ with respect to the component $B^{j}$ is defined for $F\in \mathcal{S}$ as the $\mathcal{H}$-valued random variable 
\begin{equation}\label{derivative}
D^{j}_{t}F=\sum_{i}^{N}\frac{\partial f}{\partial x^{j}_{i}}(B_{s_{1}},\ldots,B_{s_{N}})\mathbbm{1}_{[0,s_{i}]}(t),\qquad t\in[0,T],
\end{equation}
where $x^{j}_{i}$ stands for the $j$-th component of $x$. We can iterate this procedure to define higher order derivatives $D^{j_{1},\ldots, j_{l}}F$, which take values in $\mathcal{H}^{\otimes l}$. For any $p\ge 1$ and integer $k\ge 1$, we define the Sobolev space $\mathbb{D}^{k,p}$ as the closure of $\mathcal{S}$ with respect to the norm 
\begin{equation}\label{Sobolev norm}
\|F\|_{k,p}^{p}=\mathbb{E}[|F|^{p}]+\mathbb{E}\left[\sum_{i=1}^{k}\left(\sum_{j_{1},\ldots,j_{l}}^{m}\|D^{j_{i},\ldots,j_{l}}F\|^{2}_{\mathcal{H}^{\otimes l}}\right)^{p/2}\right].
\end{equation} 
If $V$ is Hilbert space, $\mathbb{D}^{k,p}(V)$ denotes the corresponding Sobolev space of $V$-valued random variables.

For any $j=1,\ldots,m$, we denote by $\delta^{\diamond,j}$ the adjoint of the derivative operator $D^{j}$. For a process $\{u_{t};\, t\in[0,T]\}$, we say $u\in \mathrm{Dom}\, \delta^{\diamond,j}$ if there is a $\delta^{\diamond,j}(u)\in L^{2}(\mathbb{R}^{m})$ such that for any $F\in \mathbb{D}^{k,p}$ the following duality relation holds 
\begin{equation}\label{48}
\mathbb{E}\left[\langle u,D^{j}F \rangle_{\mathcal{H}}\right]=\mathbb{E}\left[\delta^{\diamond,j}(u)F\right].
\end{equation}
The random variable $\delta^{\diamond,j}(u)$ is also called the Skorohod integral of $u$ with respect to the fBm $B^{j}$, and we use the notation $\delta^{\diamond,j}(u)=\int_{0}^{T}u_{t}\delta^{\diamond} B^{j}_{t}$. It is well known that $\mathbb{D}^{1,2}(\mathcal{H})\subset \mathrm{Dom}\,(\delta^{\diamond,j})$ for all $j=1,\ldots,m$.

We now introduce a pathwise type integral defined on the Wiener space, called Stratonovich integral. Namely let $u=\{u_{t},t\in [0,T]\}$ be a continuous stochastic process, and let $\mathcal{P}$ be a generic partition of $[s,t]$. Following \cite[Section 3.1]{Nualart}, we define
\begin{align}\label{def of S}
B^{i,\mathcal{P}}_{t}=\sum_{[r,v]\in\mathcal{P}}\frac{B^{i}_{vr}}{v-r}\mathbbm{1}_{[r,v]}(t),
\qquad \text{and} \qquad
S^{i,\mathcal{P}}_{ts}=\int_{s}^{t}u_{r} \, B^{i,\mathcal{P}}_{r}dr.
\end{align}
Then the Stratonovich integral of $u$ with respect to $B^{i}$ is defined as \begin{equation}\label{def of stra}
\int_{s}^{t}u_{r} \, dB^{i}_{r}=\lim_{|\mathcal{P}|\to 0}S^{i,\mathcal{P}}_{ts},
\end{equation}
where the limit is understood in probability. On the other hand, assume that $u$ is $C^{\kappa}$-H\"older with $\kappa+H>1$. Moreover we suppose that $u\in \mathbb{D}^{1,2}(\mathcal{H})$ and the derivative $D^{j}_{s}u_{t}$ exists and satisfies 
\[
\int_{0}^{T}\int_{0}^{T} |D^{j}_{s}u_{t}||t-s|^{2H-2}ds\,dt<\infty \quad \textnormal{a.s}, 
\qquad \text{and} \qquad 
\mathbb{E}\left[\|D^{j}u\|^{2}_{\mathcal{|H|}^{\otimes 2}}\right]<\infty.
\]
Then the Stratonovich integral $\int_{0}^{T}u_{t} d B^{j}_{t}$ exists, and we have the following relation between Skorohod and Stratonovich stochastic integrals:
\begin{equation}\label{relation integrals}
\int_{0}^{T} u_{t} d B^{j}_{t}=\int_{0}^{T}u_{t}\delta^{\diamond} B^{j}_{t}+a_{H}\int_{0}^{T}\int_{0}^{T}D^{j}_{s}\,u_{t}|t-s|^{2H-2}ds\,dt.
\end{equation}
We close this section by spelling out  Meyer's inequality (see \cite[Proposition 1.5.4]{Nualart}) for the Skorohod integral: given $p>1$ and an integer $k\ge 1$, there is a constant $c_{k,p}$ such that the $k$-th iterated Skorohod integral satisfies 
\begin{equation}\label{delta u bound}
\|(\delta^{\diamond})^{k}(u)\|_{p}\le c_{k,p}\|u\|_{\mathbb{D}^{k,p}(\mathcal{H}^{\otimes k})},
\qquad \text{for all} \quad u\in {\mathbb{D}^{k,p}(\mathcal{H}^{\otimes k})}.
\end{equation}

\subsection{First level of the Volterra rough path}\label{first level of the Volterra rough path}
In this section, we will construct the first level of the Volterra rough path driven by a fBm as introduced in Notation \ref{fBm}. We start by defining our main object of study. 
\begin{Def}\label{def of z1}
Consider a fractional Brownian motion $B :[0,T]\rightarrow \mathbb{R}^m$ with Hurst parameter $H$ as given in Notation~\ref{fBm}, and a function $h$ of the form $h^{\tau}_{ts}(r)=(\tau-r)^{-\gamma}\mathbbm{1}_{[s,t]}(r)$. We assume that $H$, $\gamma$ satisfy
$H\in\left(1/2, 1\right)$,  $\gamma\in~(0, 2H-1)$.
Then for $(s,t,\tau)\in \Delta_{3}$ we define the increment $\bfz^{1,\tau,i}_{ts}=\int_{s}^{t} \left(\tau-r\right)^{-\gamma}dB^{i}_{r}$ as a Wiener integral of the form 
\begin{equation}\label{z1 expression fbm}
\bfz_{ts}^{1,\tau,i}:=B^{i}(h^{\tau}_{ts}).
\end{equation}
\end{Def}
\begin{remark}
Note that for the particular type of integrand $h$ considered in Definition \ref{def of z1}, the process $B^i(h^\tau_{ts})$ is additive in its lower variables, in the sense that  
\begin{equation}
B^i(h^\tau_{ts})=B^i(h^\tau_{t0})-B^i(h^\tau_{s0}).
\end{equation}
Thus defining $\bfz^{1,\tau}_t:= \bfz^{1,\tau}_{t0}$ we have that $\bfz^1$ is defined on the simplex $\Delta_2$. 
\end{remark}

With Definition \ref{def of z1} in hand, we now estimate the second moment of $\bfz^{1,\tau,i}_{ts}$ and $\bfz^{1,\tau\tau^{\prime},i}_{ts}$. 
\begin{lemma}\label{z1 second moment}
Consider the Volterra rough path $\bfz^{1}$ as given in \eqref{z1 expression fbm}, and four parameters $H\in~\left(1/2,1\right)$, $\gamma\in (0,1)$, $\eta,\, \zeta\in[0,1]$ satisfying 
\begin{eqnarray}\label{H, gamma, eta, zeta conditions}
 \gamma<2H-1, \qquad \text{and} \qquad 0\le \zeta \le \inf(H-\gamma, \eta).
\end{eqnarray}
Then for $(s,t,\tau)\in \Delta_{3}$, we have
\begin{equation}\label{2nd moment of z1}
\mathbb{E}[(\bfz^{1,\tau,i}_{ts})^{2}]\lesssim \left|\psi^{1}_{(H,\gamma)}\left(\tau,t,s\right)\right|^{2}.
\end{equation}
In addition for $(s,t,\tau^{\prime},\tau)\in\Delta_{4}$, we get
\begin{equation}\label{2nd moment of z1 2}
\mathbb{E}[(\bfz^{1,\tau\tau^{\prime},i}_{ts})^{2}]\lesssim\left|\psi^{1,2}_{(H,\gamma,\eta,\zeta)}\left(\tau,,\tau^{\prime},t,s\right)\right|^{2},
\end{equation}
where $\psi^{1}$ and $\psi^{1,2}$ are given in Notation \ref{notation of psi}.
\end{lemma}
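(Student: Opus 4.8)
The starting point is the Wiener isometry \eqref{isometry} together with the explicit inner product \eqref{inner product}. With the integrand $h^{\tau}_{ts}(r)=(\tau-r)^{-\gamma}\mathbbm{1}_{[s,t]}(r)$ of Definition \ref{def of z1}, both moments become weighted double integrals against the kernel $|u-v|^{2H-2}$. I would therefore isolate the single auxiliary quantity
\[
K(\tau;\beta):=\int_{s}^{t}\int_{s}^{t}(\tau-u)^{-\beta}(\tau-v)^{-\beta}|u-v|^{2H-2}\,du\,dv,\qquad 0<\beta<H,
\]
and reduce everything to proving $K(\tau;\beta)\lesssim |\psi^{1}_{(H,\beta)}(\tau,t,s)|^{2}$. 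Indeed $\mathbb{E}[(\bfz^{1,\tau,i}_{ts})^{2}]=a_{H}K(\tau;\gamma)$, so \eqref{2nd moment of z1} is immediate from the auxiliary estimate once we note that $\gamma<2H-1<H$.

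For the auxiliary estimate I would substitute $u=t-p$, $v=t-q$ and rescale by $h:=t-s$, so that with $b:=(\tau-t)/(t-s)$ one has $K(\tau;\beta)=h^{2H-2\beta}J(b)$, where
\[
J(b)=\int_{0}^{1}\int_{0}^{1}(b+p)^{-\beta}(b+q)^{-\beta}|p-q|^{2H-2}\,dp\,dq
\]
is nonincreasing in $b$. The two branches of $\psi^{1}$ in \eqref{psi 1} correspond to the two natural regimes. When $\tau-t\ge t-s$ (so $b\ge 1$) I bound $(b+p)^{-\beta}\le b^{-\beta}$ to get $J(b)\lesssim b^{-2\beta}$, hence $K\lesssim (t-s)^{2H}(\tau-t)^{-2\beta}$, which is the first branch of $\psi^{1}$ squared. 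When $\tau-t<t-s$ (so $b<1$) I use monotonicity to reduce to $J(0)$ and obtain $K\lesssim (t-s)^{2(H-\beta)}$, the second branch. The one genuinely delicate point is the finiteness of $J(0)$: integrating the inner variable yields a Beta integral $B(1-\beta,2H-1)$ (finite since $\beta<1$ and $H>\tfrac12$) multiplied by $\int_{0}^{1}q^{2H-1-2\beta}\,dq$, and this last integral converges exactly when $\beta<H$, i.e. at the corner $p=q=0$ where all three singularities collide. This convergence at the corner is where the hypothesis $\gamma<2H-1$ (hence $\gamma<H$) is really used, and I expect it to be the main obstacle.

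For the rectangular increment I would write $\bfz^{1,\tau\tau',i}_{ts}=B^{i}(h^{\tau}_{ts}-h^{\tau'}_{ts})$ and control the difference kernel $(\tau-r)^{-\gamma}-(\tau'-r)^{-\gamma}$ on $[s,t]$, which is negative because $\tau>\tau'\ge t>r$. Two elementary bounds are available: the crude $|(\tau-r)^{-\gamma}-(\tau'-r)^{-\gamma}|\le(\tau'-r)^{-\gamma}$, and the mean-value bound $|(\tau-r)^{-\gamma}-(\tau'-r)^{-\gamma}|\le\gamma(\tau-\tau')(\tau'-r)^{-\gamma-1}$. Interpolating them with weights $1-\eta$ and $\eta$ gives
\[
\left|(\tau-r)^{-\gamma}-(\tau'-r)^{-\gamma}\right|\lesssim(\tau-\tau')^{\eta}(\tau'-r)^{-\gamma-\eta}\le(\tau-\tau')^{\eta}(\tau'-t)^{-(\eta-\zeta)}(\tau'-r)^{-\gamma-\zeta},
\]
where the last inequality uses $\tau'-r\ge\tau'-t$ together with $\eta-\zeta\ge0$. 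Plugging this into the isometry and factoring out the $r$-independent prefactor reduces the remaining double integral to $K(\tau';\gamma+\zeta)$.

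Finally, applying the auxiliary estimate with $\beta=\gamma+\zeta$ and invoking relation \eqref{relation psi 1 and 12} reassembles precisely
\[
\mathbb{E}[(\bfz^{1,\tau\tau',i}_{ts})^{2}]\lesssim(\tau-\tau')^{2\eta}(\tau'-t)^{-2(\eta-\zeta)}\left|\psi^{1}_{(H,\gamma+\zeta)}(\tau',t,s)\right|^{2}=\left|\psi^{1,2}_{(H,\gamma,\eta,\zeta)}(\tau,\tau',t,s)\right|^{2},
\]
which is \eqref{2nd moment of z1 2}. The only constraint to monitor here is $\gamma+\zeta<H$, ensured by $\zeta\le\rho=H-\gamma$ in \eqref{H, gamma, eta, zeta conditions}; I would flag that the borderline $\zeta=\rho$ makes $J(0)$ degenerate logarithmically, so the clean bound requires the inequality $\zeta<\rho$ to be strict.
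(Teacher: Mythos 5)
Your proof is correct and follows essentially the same route as the paper's: the Wiener isometry reduces both moments to a weighted double integral against $|u-v|^{2H-2}$, which is evaluated via a Beta-function computation and bounded in the two regimes $\tau-t\ge t-s$ and $\tau-t<t-s$ to produce the minimum defining $\psi^1$, and the rectangular increment is handled by the same interpolated bound $|(\tau-r)^{-\gamma}-(\tau'-r)^{-\gamma}|\lesssim(\tau-\tau')^{\eta}(\tau'-r)^{-\gamma-\eta}$ followed by extraction of the factor $(\tau'-t)^{-(\eta-\zeta)}$. Your repackaging via the rescaled functional $K(\tau;\beta)$ is only a bookkeeping difference, and your flag about the borderline case $\zeta=H-\gamma$ is consistent with the paper, whose own proof quietly restricts to $\zeta\in[0,H-\gamma)$ at that step.
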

\begin{proof}
We first prove relation \eqref{2nd moment of z1}. According to \eqref{z1 expression fbm} and \eqref{isometry}, we can compute $\mathbb{E}[(\bfz^{1,\tau,i}_{ts})^{2}]$ as 
\begin{equation}\label{expression of 2nd z1}
\mathbb{E}[(\bfz^{1,\tau,i}_{ts})^{2}]=\mathbb{E}\left[B^{i}(h^{\tau}_{ts})\,B^{i}(h^{\tau}_{ts})\right]=\left\langle h^{\tau}_{ts}, h^{\tau}_{ts}\right\rangle_{\mathcal{H}}
\end{equation}
Owing to relation \eqref{inner product} for the inner product in $\mathcal{H}$, we thus obtain
\begin{equation}\label{expression of 2nd z1}
\mathbb{E}[(\bfz^{1,\tau,i}_{ts})^{2}]=H(2H-1)\iint_{[s,t]\times[s,t]}(\tau-r)^{-\gamma}(\tau-l)^{-\gamma}\left|r-l\right|^{2H-2}drdl. 
\end{equation}
Notice that the function $(\tau-r)^{-\gamma}(\tau-l)^{-\gamma}\left|r-l\right|^{2H-2}$ is symmetric. Hence we can recast \eqref{expression of 2nd z1} as
\begin{equation}\label{expression of 2nd z1 1}
\mathbb{E}[(\bfz^{1,\tau,i}_{ts})^{2}]=2H(2H-1)\int_{s}^{t}(\tau-r)^{-\gamma}dr\int_{r}^{t}(\tau-l)^{-\gamma}(l-r)^{2H-2}dl. 
\end{equation}
In the right hand side of \eqref{expression of 2nd z1 1}, we first estimate the integral 
\begin{equation}\label{expression J}
\int_{r}^{t}(\tau-l)^{-\gamma}(l-r)^{2H-2}dl:=J.
\end{equation}
Since $l\in (r,t)$ in \eqref{expression J}, we proceed to a change of variable $l=r+\theta(t-r)$. We obtain 
\begin{equation}\label{expression J 2}
J=(t-r)^{2H-1}\int_{0}^{1}\left(\tau-r-\theta(t-r)\right)^{-\gamma}\theta^{2H-2}d\theta
\le (t-r)^{2H-1}(\tau-r)^{-\gamma} \int_{0}^{1}(1-\theta)^{-\gamma}\theta^{2H-2}d\theta.
\end{equation}
Recall that we have assumed that $\gamma <2H-1<1$. Moreover $H>1/2$ and thus $2H-2>-1$. Hence the right hand side of \eqref{expression J 2} can be expressed in terms of Beta functions in the following way:
\[
 \int_{0}^{1}(1-\theta)^{-\gamma}\theta^{2H-2}d\theta=\mathrm{Beta}(1-\gamma,2H-1)< \infty.
 \]
Reporting this identity in the right hand side of \eqref{expression J 2}, we end up with
\begin{equation}\label{J bound}
J\lesssim (t-r)^{2H-1}(\tau-r)^{-\gamma}.
\end{equation}
Plugging \eqref{J bound} into \eqref{expression of 2nd z1}, we thus get 
\begin{equation}\label{318}
\mathbb{E}[(\bfz^{1,\tau,i}_{ts})^{2}]\lesssim \int_{s}^{t}(\tau-r)^{-2\gamma}(t-r)^{2H-1}dr.
\end{equation}
We now bound the right hand side of \eqref{318} in two different ways. First since $(\tau-r)>(t-r)$, we have
\begin{equation}\label{bound 1}
\mathbb{E}[(\bfz^{1,\tau,i}_{ts})^{2}]\lesssim \int_{s}^{t}(t-r)^{2H-2\gamma-1}dr\lesssim (t-s)^{2H-2\gamma},
\end{equation}
where we have resorted to the fact that $\gamma<2H-1<H$ for the second inequality. Next we also use the fact that $(\tau-r)>(\tau-t)$ in the right hand side of \eqref{318}, which allows to write
\begin{equation}\label{bound 2}
\mathbb{E}[(\bfz^{1,\tau,i}_{ts})^{2}]\lesssim (\tau-t)^{-2\gamma}\int_{s}^{t}(t-r)^{2H-1}dr\lesssim (\tau-t)^{-2\gamma}(t-s)^{2H}.
\end{equation}
Combining \eqref{bound 1} and \eqref{bound 2}, we end up with the following estimate for the second moment of $\bfz^{1,\tau,i}_{ts}$:
\begin{equation}\label{desired result}
\mathbb{E}[(\bfz^{1,\tau,i}_{ts})^{2}]\lesssim \left[(\tau-t)^{-2\gamma}(t-s)^{2H}\right]\wedge (t-s)^{2H-2\gamma}=\left(\psi^{1}_{(H,\gamma)}(\tau,t,s)\right)^{2},
\end{equation}
where we have appealed to the definition \eqref{psi 1} of $\psi^{1}$ for the second identity.
Relation \eqref{desired result} is the desired result \eqref{2nd moment of z1}.

Next, we will prove inequality \eqref{2nd moment of z1 2}. To this aim, we first note that owing to \eqref{z1 expression fbm}, we have the following expression for $\bfz^{1,\tau\tau^{\prime},i}_{ts}$,
\begin{equation}\label{expression of z12}
\bfz^{1,\tau\tau^{\prime},i}_{ts}=\bfz^{1,\tau,i}_{ts}-\bfz^{1,\tau^{\prime},i}_{ts}=B^{i}(h^{\tau}_{ts}-h^{\tau^{\prime}}_{ts}).
\end{equation}
Similarly to \eqref{expression of 2nd z1 1}, we can thus rewrite $\mathbb{E}[(\bfz^{1,\tau\tau^{\prime},i}_{ts})^{2}]$ as
\begin{multline}\label{expression of 2nd z1 2}
\mathbb{E}[(\bfz^{1,\tau\tau^{\prime},i}_{ts})^{2}]=\langle h^{\tau}_{ts}-h^{\tau^{\prime}}_{ts}, h^{\tau}_{ts}-h^{\tau^{\prime}}_{ts}\rangle_{\mathcal{H}}\\
=2H(2H-1) \int_{s}^{t}\left[(\tau^{\prime}-r)^{-\gamma}-(\tau-r)^{-\gamma}\right]dr\,\int_{r}^{t}\left[(\tau^{\prime}-l)^{-\gamma}-(\tau-l)^{-\gamma}\right](l-r)^{2H-2}dl.
\end{multline}
We now recall an elementary inequality on increments of negative power functions. Namely for $\tau>\tau^{\prime}>r$ and $\eta\in [0,1]$ we have 
 \[
 (\tau^{\prime}-r)^{-\gamma}-(\tau-r)^{-\gamma}\lesssim (\tau-\tau^{\prime})^{\eta}(\tau^{\prime}-r)^{-\eta-\gamma} .
 \]
  Plugging this upper bound into the right hand side of \eqref{expression of 2nd z1 2}, we obtain
 \begin{equation}\label{324}
\mathbb{E}[(\bfz^{1,\tau\tau^{\prime},i}_{ts})^{2}]\lesssim|\tau-\tau^{\prime}|^{2\eta}\int_{s}^{t}(\tau^{\prime}-r)^{-\eta-\gamma}\int_{r}^{t}(\tau^{\prime}-l)^{-\eta-\gamma}(l-r)^{2H-2}dl.
\end{equation}
The expression \eqref{324} is now very similar to \eqref{expression of 2nd z1 1}. Therefore with the same steps as for \eqref{expression J}-\eqref{bound 2}, for some $\zeta\in [0,H-\gamma)$ and $\eta \in [\zeta,1]$ we get
\begin{equation}\label{desired result 2}
\mathbb{E}[(\bfz^{1,\tau\tau^{\prime},i}_{ts})^{2}]\lesssim |\tau-\tau^{\prime}|^{2\eta}|\tau^{\prime}-t|^{-2(\eta-\zeta)}\left(\left[(\tau-t)^{-2\gamma-2\zeta}(t-s)^{2H}\right]\wedge (t-s)^{2H-2\gamma-2\zeta}\right).
\end{equation}
According to the definition \eqref{psi 12} of $\psi^{1,2}$,  \eqref{desired result 2} is equivalent to 
 \begin{equation*}
 \mathbb{E}[(\bfz^{1,\tau\tau^{\prime},i}_{ts})^{2}]\lesssim \left|\psi^{1,2}_{(H,\gamma,\eta,\zeta)}(\tau,\tau^{\prime},t,s)\right|^{2}.
 \end{equation*}
This finishes the proof of \eqref{2nd moment of z1 2}. 
\end{proof}
\begin{remark}
One can easily extend the computation of Lemma \ref{2nd moment of z1} in order to get more general bounds for covariance functions. Namely for any $(s,u,v,\tau)\in \Delta_{4}$, and recalling the expression~\eqref{psi 1} for $\psi^{1}$ we have
\begin{equation}\label{ZusZvs}
\left|\mathbb{E}[\bfz^{1,\tau,i}_{us}\bfz^{1,\tau,i}_{vs}]\right|\lesssim\left|\psi^{1}_{(H,\gamma)}(\tau,v,s)\right|^{2}.
\end{equation}
Similarly for any $(s,u,v,\tau^{\prime},\tau)\in \Delta_{5}$ and recalling our definition \eqref{psi 12} for $\psi^{1,2}$, we obtain
\begin{equation}\label{ZusZvs 2}
\left|\mathbb{E}[\bfz^{1,\tau\tau^{\prime},i}_{us}\bfz^{1,\tau\tau^{\prime},i}_{vs}]\right|\lesssim \left|\psi^{1,2}_{(H,\gamma,\eta,\zeta)}(\tau,\tau^{\prime},v,s)\right|^{2},
\end{equation}
where $H\in~(\frac{1}{2},1)$, $\gamma\in (0,1)$, and $\eta, \zeta\in[0,1]$ satisfy 
  relation \eqref{H, gamma, eta, zeta conditions}.
\end{remark}

\subsection{Second level of the Volterra rough path}\label{second level of the Volterra rough path}In this section we turn our attention to the construction of a nontrivial Volterra rough path above a fBm. More specifically our aim is to construct a family $\{\bfz^{1,\tau},\bfz^{2,\tau}\}$ verifying Definition \ref{hyp 2}. Let us start with the definition of $\bfz^{2,\tau}$. 
\begin{Def}\label{the second level fBm Volterra rough path} We consider a fractional Brownian motion $B :[0,T]\rightarrow \mathbb{R}^m$ as given in Notation~\ref{fBm}, as well as the first level of the Volterra rough path $\bfz^{1,\tau}$ defined by \eqref{z1 expression fbm}. As in Definition~\ref{def of z1}, we assume that $H$,  $\gamma$ satisfy $H\in (\frac{1}{2}, 1)$ and $\gamma\in(0, 2H-1)$. Then for $(s,r,t,\tau)\in \Delta_{4}$, we set
\begin{equation}\label{def of u}
 u^{\tau,i}_{ts}(r)=(\tau-r)^{-\gamma}\bfz^{1,r,i}_{rs}\mathbbm{1}_{[s,t]}(r).
 \end{equation}
 The increment $\bfz^{2,\tau}_{ts}$ is given as follows: if $i\ne j$ we define $\bfz^{2,\tau,i,j}_{ts}$ as
 \begin{equation}\label{z2 ij}
 \bfz^{2,\tau,i,j}_{ts}=B^{j}(u^{\tau,i}_{ts}),
 \end{equation}
where (conditionally on $B^{i}$) the random variable $B^{j}(u^{\tau,i}_{ts})$ has to be interpreted as a Wiener integral.
In the case $i=j$, we set
\begin{equation}\label{z2 ii}
\bfz^{2,\tau,i,i}_{ts}=\int_{s}^{t} u^{\tau,i}_{ts}(r) dB^{i}_{r}, 
\end{equation}
where the right hand side of \eqref{z2 ii} is defined as a Stratonovich integral like \eqref{relation integrals}.
 \end{Def}
\begin{remark}\label{relation zij and zii}
With  Definition \ref{def of z1} of $\bfz^{1,\tau}$ in mind when considering the process $u^{\tau,i}$ in \eqref{def of u}, we get that $\bfz^{2,\tau}$ in \eqref{z2 ij}-\eqref{z2 ii} is formally interpreted as 
\begin{equation}\label{general z2}
\bfz_{ts}^{2,\tau,i,j}=\int_{s}^{t}\left(\tau-r\right)^{-\gamma}\int_{s}^{r}\left(r-l\right)^{-\gamma}dB_{l}^{i} dB_{r}^{j}.
\end{equation}
Below we will show that $\bfz^{2,\tau}$ can indeed be considered as the double iterated integral  in \eqref{general z2}.
\end{remark}
Similarly to what we did for $\bfz^{1}$, we will now estimate the second moment of $\bfz^{2,\tau}$.
\begin{prop}\label{second moment of z2}
Consider the second level $\bfz^{2,\tau}$ of the Volterra rough path, as defined in 
\eqref{z2 ij}-\eqref{z2 ii}. Recall that $H\in(\frac{1}{2},1)$, $\gamma\in (0,1)$, and $\eta,\,\zeta\in[0,1]$ satisfy the relation \eqref{H, gamma, eta, zeta conditions}. Then for $(s,t,\tau)\in \Delta_{3}$ and any $i,j=1,\ldots,d$, we have
\begin{equation}\label{prob reg}
\mathbb{E}\left[\left(\bfz^{2,\tau,i,j}_{ts}\right)^{2}\right]\lesssim \left|\psi^{1}_{(2H-\gamma,\gamma)}(\tau,t,s)\right|^{2}.
\end{equation}
As far as the $(1,2)$-type increments are considered, we get
\begin{equation}\label{prob reg 2}
\mathbb{E}\left[\left(\bfz^{2,\tau\tau^{\prime},i,j}_{ts}\right)^{2}\right]\lesssim \left|\psi^{1,2}_{(2H-\gamma,\gamma,\eta,\zeta)}(\tau,\tau^{\prime},t,s)\right|^{2},
\end{equation}
where $\psi^{1}$ and $\psi^{1,2}$ are given in Notation \ref{notation of psi}. 
\end{prop}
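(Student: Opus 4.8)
The plan is to handle the off-diagonal case $i\neq j$ and the diagonal case $i=j$ separately, reduce both to a single family of singular multiple integrals, and then extract the two-sided bound defining $\psi^1_{(2H-\gamma,\gamma)}$ by estimating that integral in two complementary ways (and likewise for $\psi^{1,2}$).

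\textbf{Off-diagonal case and the core integral.} Since $B^i$ and $B^j$ are independent and the integrand $u^{\tau,i}_{ts}$ of \eqref{def of u} is $B^i$-measurable, conditionally on $B^i$ the variable $\bfz^{2,\tau,i,j}_{ts}=B^j(u^{\tau,i}_{ts})$ of \eqref{z2 ij} is a centered Gaussian Wiener integral with conditional variance $\|u^{\tau,i}_{ts}\|^2_{\mathcal H}$. Taking expectations and expanding the inner product \eqref{inner product}, I would get
\[
\mathbb{E}[(\bfz^{2,\tau,i,j}_{ts})^2]=\mathbb{E}[\|u^{\tau,i}_{ts}\|^2_{\mathcal H}]=a_H\int_s^t\int_s^t(\tau-r)^{-\gamma}(\tau-r')^{-\gamma}\,\mathbb{E}[\bfz^{1,r,i}_{rs}\bfz^{1,r',i}_{r's}]\,|r-r'|^{2H-2}\,dr\,dr'.
\]
I would then replace the covariance by its bound from \eqref{ZusZvs} (equivalently Cauchy--Schwarz combined with \eqref{2nd moment of z1}, using $\psi^1_{(H,\gamma)}(r,r,s)=|r-s|^{H-\gamma}$), i.e.\ $|\mathbb{E}[\bfz^{1,r,i}_{rs}\bfz^{1,r',i}_{r's}]|\lesssim(r-s)^{H-\gamma}(r'-s)^{H-\gamma}$.

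\textbf{The two-sided estimate for $\psi^1$.} The resulting integral is bounded in two ways. Using $\tau-r\ge\tau-t$ factors out $(\tau-t)^{-2\gamma}$, after which a scaling argument on $\int_s^t\int_s^t(r-s)^{H-\gamma}(r'-s)^{H-\gamma}|r-r'|^{2H-2}\,dr\,dr'$ gives $(\tau-t)^{-2\gamma}(t-s)^{4H-2\gamma}$; using instead $\tau-r\ge t-r$ and the same scaling gives $(t-s)^{4H-4\gamma}$. Both underlying Beta-type constants are finite since $H>\frac12$ and $\gamma<1$. Taking the minimum reproduces exactly $|\psi^1_{(2H-\gamma,\gamma)}(\tau,t,s)|^2$ of \eqref{psi 1}, which is \eqref{prob reg}. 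Note that $\mathbb{E}[\|u^{\tau,i}_{ts}\|^2_{\mathcal H}]$ written out in full is precisely the deterministic quadruple integral $\|Du^{\tau,i}_{ts}\|^2_{|\mathcal H|^{\otimes2}}$, so this same estimate will also control the Malliavin-derivative term below.

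\textbf{Diagonal case.} Here \eqref{z2 ii} is a Stratonovich integral, so via \eqref{relation integrals} I would write $\bfz^{2,\tau,i,i}_{ts}=\delta^{\diamond,i}(u^{\tau,i}_{ts})+a_H\int\!\int D^i_m u^{\tau,i}_{ts}(r)\,|r-m|^{2H-2}\,dm\,dr$. For the Skorohod part, Meyer's inequality \eqref{delta u bound} with $p=2$ bounds $\mathbb{E}[(\delta^{\diamond,i}(u^{\tau,i}_{ts}))^2]$ by $\mathbb{E}[\|u^{\tau,i}_{ts}\|^2_{\mathcal H}]+\mathbb{E}[\|Du^{\tau,i}_{ts}\|^2_{\mathcal H^{\otimes2}}]$, and by the previous paragraph both summands are controlled by $|\psi^1_{(2H-\gamma,\gamma)}|^2$. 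For the correction term, $D^i_m u^{\tau,i}_{ts}(r)=(\tau-r)^{-\gamma}(r-m)^{-\gamma}\mathbbm{1}_{[s,r]}(m)\mathbbm{1}_{[s,t]}(r)$, so the inner integral equals $c_{H,\gamma}(r-s)^{2H-1-\gamma}$ — finite precisely because $\gamma<2H-1$ forces $2H-2-\gamma>-1$ — reducing the term to $\int_s^t(\tau-r)^{-\gamma}(r-s)^{2H-1-\gamma}\,dr$. I bound this by $(\tau-t)^{-\gamma}(t-s)^{2H-\gamma}$ (via $\tau-r\ge\tau-t$) and by $(t-s)^{2H-2\gamma}$ (via $\tau-r\ge t-r$ and a Beta integral); its minimum is $\psi^1_{(2H-\gamma,\gamma)}(\tau,t,s)$, whose square is the target.

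\textbf{The $(1,2)$ increments, and the main obstacle.} Only the factor $(\tau-r)^{-\gamma}$ in \eqref{def of u} depends on $\tau$, so $u^{\tau,i}_{ts}-u^{\tau',i}_{ts}=[(\tau-r)^{-\gamma}-(\tau'-r)^{-\gamma}]\,\bfz^{1,r,i}_{rs}\mathbbm{1}_{[s,t]}(r)$, and $\bfz^{2,\tau\tau',i,j}_{ts}$ is obtained by repeating the three steps above verbatim with this integrand. The elementary inequality $(\tau'-r)^{-\gamma}-(\tau-r)^{-\gamma}\lesssim(\tau-\tau')^{\eta}(\tau'-r)^{-\eta-\gamma}$ already used in Lemma \ref{z1 second moment}, together with the covariance bound \eqref{ZusZvs 2}, produces the prefactor $|\tau-\tau'|^{2\eta}|\tau'-t|^{-2(\eta-\zeta)}$ and promotes $\gamma$ to $\gamma+\zeta$ inside the $\psi^1$-part, giving exactly $|\psi^{1,2}_{(2H-\gamma,\gamma,\eta,\zeta)}|^2$ of \eqref{psi 12}, i.e.\ \eqref{prob reg 2}. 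I expect the main difficulty to be the singular-integral bookkeeping: securing \emph{both} sides of the minimum simultaneously and verifying that all Beta-type constants converge, especially in the $(1,2)$ estimate where the promotion $\gamma\mapsto\gamma+\zeta$ must be compatible with the convergence condition $2H-\gamma-\zeta>0$; this is precisely where the standing constraint $\zeta\le\inf(H-\gamma,\eta)$ from \eqref{H, gamma, eta, zeta conditions} is consumed.
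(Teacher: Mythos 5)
Your proposal is correct and follows essentially the same route as the paper's proof: conditioning on $B^i$ to reduce the off-diagonal case to $\mathbb{E}[\|u^{\tau,i}_{ts}\|^2_{\mathcal H}]$, the Stratonovich--Skorohod decomposition with Meyer's inequality for the diagonal case, the same computation of $D^i u^{\tau,i}_{ts}$ and the same two-sided Beta-integral estimates yielding the minimum defining $\psi^1_{(2H-\gamma,\gamma)}$. Your sketch of the $(1,2)$-increment bound (which the paper leaves to the reader) via the power-increment inequality and \eqref{ZusZvs 2} is also consistent with the paper's intended argument.
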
 
\begin{proof}
We will prove relation \eqref{prob reg} in the following, \eqref{prob reg 2} can be treated in a similar way and is left to the reader for sake of conciseness. According to Remark \ref{relation zij and zii}, we consider $\bfz^{2,\tau,i,j}_{ts}$ and $\bfz^{2,\tau,i,i}_{ts}$ as different integrals. Therefore we will split the proof of \eqref{prob reg} into two parts: $i\neq j$ and $i=j$. 

\smallskip

\noindent
\emph{Step 1: Relation \eqref{prob reg} for $i\neq j$.} In this step, we will show that \eqref{prob reg} holds for $\bfz^{2,\tau,i,j}_{ts}$ as given in \eqref{z2 ij}. According to Definition \ref{the second level fBm Volterra rough path}, we consider the integral \eqref{z2 ij} as a conditional Wiener integral. Namely due to the independence of $B^{i}$ and $B^{j}$ we can write
\begin{equation}\label{E z2}
\mathbb{E}\left[\left(\bfz_{ts}^{2,\tau,i,j}\right)^{2}\right]=\mathbb{E}\left\{\mathbb{E}\left[\left(\bfz_{ts}^{2,\tau,i,j}\right)^{2}\,\Big|\,B^{i}\right]\right\}=\mathbb{E}\left\{\mathbb{E}\left[\left(B^{j}(u^{\tau,i}_{ts})\right)^{2}\,\Big|\,B^{i}\right]\right\},
\end{equation}
where we recall that $u^{\tau,i}_{ts}$ is defined by \eqref{def of u}. Furthermore, relation \eqref{E z2} for Wiener integral reads
\begin{equation}\label{z2-ij upper bound}
\mathbb{E}\left[\left(B^{j}(u^{\tau,i}_{ts})\right)^{2}\,\Big|\,B^{i}\right]=\|u^{\tau,i}_{ts}\|^{2}_{\mathcal{H}},
\end{equation}
and thus 
\begin{equation}\label{z2 relation}
\mathbb{E}\left[\left(\bfz_{ts}^{2,\tau,i,j}\right)^{2}\right]=\mathbb{E}\left[\|u^{\tau,i}_{ts}\|^{2}_{\mathcal{H}}\right].\end{equation}
In order to bound the right hand side of \eqref{z2 relation}, we resort to the expression \eqref{inner product} for the inner product in $\mathcal{H}$. This yields 
\begin{multline*}
\mathbb{E}\left[\| u^{\tau,i}_{ts}\|^{2}_{\mathcal{H}}\right]=\EE[\langle u^{\tau,i}_{ts}, u^{\tau,i}_{ts}\rangle _{\mathcal{H}}]=H(2H-1)\mathbb{E}\Big[\int_{s}^{t}\int_{s}^{t}\left((\tau-r_{1})^{-\gamma}\int_{s}^{r_{1}}(r_{1}-l_{1})^{-\gamma}dB^{i}_{l_{1}}\right) 
\\
\times \left((\tau-r_{2})^{-\gamma}\int_{s}^{r_{2}}(r_{2}-l_{2})^{-\gamma}dB^{i}_{l_{2}}\right)|r_{1}-r_{2}|^{2H-2}dr_{1}dr_{2}\Big].
\end{multline*}
Thanks to an easy application of Fubini's theorem, and invoking the symmetry of the integrand like in \eqref{expression of 2nd z1 1} we get 
\begin{multline}\label{expression of I1}
\mathbb{E}\left[\| u^{\tau,i}_{ts}\|^{2}_{\mathcal{H}}\right]=2H(2H-1)\int_{s}^{t}\int_{s}^{r_{1}}(\tau-r_{1})^{-\gamma}(\tau-r_{2})^{-\gamma}|r_{1}-r_{2}|^{2H-2}
\\
\times \mathbb{E}\left[\int_{s}^{r_{1}}(r_{1}-l_{1})^{-\gamma}dB^{i}_{l_{1}}\int_{s}^{r_{2}}(r_{2}-l_{2})^{-\gamma}dB^{i}_{l_{2}}\right]dr_{2}dr_{1}.
\end{multline}
Moreover, owing to \eqref{ZusZvs}, and recalling the definition \eqref{psi 1} of $\psi^{1}$, for $r_{2}<r_{1}$ we have
\begin{equation}\label{Zr1sZr2s}
\mathbb{E}\left[\int_{s}^{r_{1}}(r_{1}-l_{1})^{-\gamma}dB^{i}_{l_{1}}\int_{s}^{r_{2}}(r_{2}-l_{2})^{-\gamma}dB^{i}_{l_{2}}\right]=\mathbb{E}\left[\bfz^{1,r_{1},i}_{r_{1}s}\bfz^{1,r_{2},i}_{r_{2}s}\right] \lesssim |r_{1}-s|^{2H-2\gamma}.
\end{equation}
Plugging \eqref{Zr1sZr2s} into \eqref{expression of I1}, we thus get
\begin{equation}\label{I1 upper bound 1}
\mathbb{E}\left[\| u^{\tau,i}_{ts}\|^{2}_{\mathcal{H}}\right]\lesssim \int_{s}^{t}\int_{s}^{r_{1}}(\tau-r_{1})^{-\gamma}(\tau-r_{2})^{-\gamma}|r_{1}-r_{2}|^{2H-2}|r_{1}-s|^{2H-2\gamma}dr_{1}dr_{2}.
\end{equation}
Similarly to what we did for \eqref{expression of 2nd z1}-\eqref{bound 2} in the proof of Lemma \ref{z1 second moment}, we evaluate the right hand side of \eqref{I1 upper bound 1} thanks to elementary integral bounds and the  use of $\mathrm{Beta}$ functions. We let the patient reader check that we get \begin{equation}\label{I1 bound}
\mathbb{E}\left[\| u^{\tau,i}_{ts}\|^{2}_{\mathcal{H}}\right]\lesssim \left[|\tau-t|^{-2\gamma}|t-s|^{4H-2\gamma}\right]\wedge |t-s|^{4H-4\gamma}.
\end{equation}
Plugging \eqref{I1 bound} into \eqref{z2-ij upper bound}, we thus obtain
\begin{equation}\label{skorohod upper bound}
\mathbb{E}\left[\left(\bfz_{ts}^{2,\tau,i,j}\right)^{2}\right]\lesssim \left[|\tau-t|^{-2\gamma}|t-s|^{4H-2\gamma}\right]\wedge |t-s|^{4H-4\gamma}=\left|\psi^{1}_{(2H-\gamma,\gamma)}(\tau,t,s)\right|^{2},
\end{equation}
where we have invoked the definition \eqref{psi 1} of $\psi^{1}$. This is the desired result \eqref{prob reg}.

\smallskip

\noindent
\emph{Step 2: Relation \eqref{prob reg} for $i=j$.} In this step, we will show that relation \eqref{prob reg} holds for $\bfz^{2,\tau,i,i}_{ts}$ defined by \eqref{z2 ii}. According to Definition \ref{the second level fBm Volterra rough path}, we consider \eqref{z2 ii} as a Stratonovich integral like~\eqref{relation integrals}. We thus recast   \eqref{z2 ii} as
\begin{equation}\label{relation integrals 2}
\bfz^{2,\tau,i,i}_{ts}=\int_{s}^{t} u^{\tau,i}_{ts}(r) d B^{i}_{r}
=\int_{s}^{t}u^{\tau,i}_{ts}(r)\, \delta^{\diamond}\! B^{i}_{r}
+H(2H-1)\int_{s}^{t}\int_{s}^{t}D^{i}_{l}\left(u^{\tau,i}_{ts}(r)\right)|r-l|^{2H-2}dr\,dl,
\end{equation}
where $u^{\tau,i}_{ts}(r)$ as given in \eqref{def of u}. Taking square and expectation on both sides of \eqref{relation integrals 2}, we obtain
\begin{equation}\label{second moment of stratonovich}
\mathbb{E}\left[\left(\int_{s}^{t} u^{\tau,i}_{ts}(r) d B^{i}_{r}\right)^{2}\right]\lesssim J_{1}+J_{2},
\end{equation}
where the terms $J_{1}$ and $J_{2}$ are respectively defined by 
\begin{align}
&J_{1}=\mathbb{E}\left[\left(\int_{s}^{t}u^{\tau,i}_{ts}(r) \, \delta^{\diamond}\!  B^{i}_{r}\right)^{2}\right]\label{def J1},
\\
&J_{2}=\left(H(2H-1)\right)^{2}\mathbb{E}\left[\left(\int_{s}^{t}\int_{s}^{t}D^{i}_{l}\left(u^{\tau,i}_{ts}(r)\right)|r-l|^{2H-2}dr\,dl\right)^{2}\right]\label{def J2}.
\end{align}
In the following, we will estimate $J_{1}$ and $J_{2}$ separately. 

In order to upper bound $J_{1}$, we recall that the integral $\int_{s}^{t}u^{\tau,i}_{ts}(r)\delta^\diamond B^{i}_{r}$ in the right hand side of~\eqref{def J1} is interpreted as a Skorohod integral of the form $\delta^\diamond (u^{\tau,i}_{ts})$. Resorting to \eqref{delta u bound}, we thus have
\begin{equation}\label{delta u bound 1}
J_{1}=\|\delta^\diamond \left(u^{\tau,i}_{ts}\right)\|^{2}_{2}\lesssim \|u^{\tau,i}_{ts}\|^{2}_{\mathbb{D}^{1,2}(\mathcal{H}^{\otimes 2})}.
\end{equation}
Let us now handle the right hand side of \eqref{delta u bound 1}. Owing to \eqref{Sobolev norm}, we get 
\begin{equation}\label{upper bound J1 1}
J_{1}\lesssim \mathbb{E}\left[\| u^{\tau,i}_{ts}\|^{2}_{\mathcal{H}}\right]+\mathbb{E}\left[\|D^{i}\left(u^{\tau,i}_{ts}\right)\|^{2}_{\mathcal{H}^{\otimes 2}}\right].
\end{equation}
Notice that the first term of the right hand side of \eqref{upper bound J1 1} is what we upper bounded in Step 1. Thanks to \eqref{I1 bound}, we obtain
\begin{equation}\label{J1 bound 1}
\mathbb{E}\left[\| u^{\tau,i}_{ts}\|^{2}_{\mathcal{H}}\right]\lesssim \left[|\tau-t|^{-2\gamma}|t-s|^{4H-2\gamma}\right]\wedge |t-s|^{4H-4\gamma}.
\end{equation}
In order to estimate the second term in the right hand side of \eqref{upper bound J1 1}, let us first compute the partial Malliavin derivative $D^{i}_{l}(u^{\tau,i}_{ts}(r))$ of $u^{\tau,i}_{ts}(r)$ with respect to $B^{i}$. Specifically, we gather \eqref{z1 expression fbm} and \eqref{def of u} in order to get  
\begin{equation*}
u^{\tau,i}_{ts}(r)=(\tau-r)^{-\gamma}B^{i}(h^{r}_{rs})\mathbbm{1}_{[s,t]}(r),\qquad \text{with}\qquad h^{r}_{rs}(l)=(r-l)^{-\gamma}\mathbbm{1}_{[s,r]}(l).\end{equation*}
Thanks to \eqref{derivative}, we thus get
\begin{equation}\label{Du 1}
D^{i}_{l}\left(u^{\tau,i}_{ts}(r)\right)=(\tau-r)^{-\gamma}h^{r}_{rs}(l)\mathbbm{1}_{[s,t]}(r)=(\tau-r)^{-\gamma}(r-l)^{-\gamma}\mathbbm{1}_{[s,r]}(l)\mathbbm{1}_{[s,t]}(r).
\end{equation}
Plugging \eqref{Du 1} into the second term of the right hand side of \eqref{upper bound J1 1}, and having the definition~\eqref{Hl norm} of $\mathcal{H}^{\otimes 2}$-norms in mind, we obtain
\begin{multline}\label{Du bound}
\|D^{i}u^{\tau,i}_{ts}\|^{2}_{\mathcal{H}^{\otimes 2}}=\left(H(2H-1)\right)^{2}\int_{s}^{t}\int_{s}^{t}\int_{s}^{r_{1}}\int_{s}^{r_{2}}(\tau-r_{1})^{-\gamma}(\tau-r_{2})^{-\gamma}\\
\times(r_{1}-l_{1})^{-\gamma}(r_{2}-l_{2})^{-\gamma}
 |l_{1}-l_{2}|^{2H-2}|r_{1}-r_{2}|^{2H-2}dl_{1}dl_{2}dr_{1}dr_{2}.
\end{multline}
The right hand side of \eqref{Du bound} can be estimated by elementary calculus similarly to \eqref{expression of 2nd z1}-\eqref{desired result}. We let the patient reader check that whenever $\ga<2H-1$ we have
\begin{equation}\label{Du bound 1}
\mathbb{E}\left[\|D^{i}u^{\tau,i}_{ts}\|^{2}_{\mathcal{H}^{\otimes 2}}\right]\lesssim  \left[|\tau-t|^{-2\gamma}|t-s|^{4H-2\gamma}\right]\wedge |t-s|^{4H-4\gamma}.
\end{equation} 
Eventually plugging \eqref{Du bound 1} and \eqref{J1 bound 1} into \eqref{upper bound J1 1}, we end up with
\begin{equation}\label{upper bound J1}
J_{1}\lesssim  \left[|\tau-t|^{-2\gamma}|t-s|^{4H-2\gamma}\right]\wedge |t-s|^{4H-4\gamma}.
\end{equation}
Next we upper bound $J_{2}$ as given in \eqref{def J2}. Recalling that we have computed $D^{i}_{l}(u^{\tau,i}_{ts}(r))$ in \eqref{Du 1} and plugging this identity into \eqref{def J2}, we obtain
\begin{equation}\label{a0}
J_{2}=\left(H(2H-1)\right)^{2}\left(\int_{s}^{t}\int_{s}^{r}(\tau-r)^{-\gamma}(r-l)^{-\gamma}|r-l|^{2H-2}dr\,dl\right)^{2}.
\end{equation}
Along the same lines as for the computations from \eqref{expression of 2nd z1 1} to \eqref{bound 2}, and recalling the fact that $\gamma<2H-1<1$, we get the following upper bound for $J_{2}$,  
\begin{equation}\label{upper bound J2}
J_{2}\lesssim \left[|\tau-t|^{-2\gamma}|t-s|^{4H-2\gamma}\right]\wedge |t-s|^{4H-4\gamma}.
\end{equation}
Eventually plugging \eqref{upper bound J2} and \eqref{upper bound J1} into \eqref{second moment of stratonovich} and recalling again the definition \eqref{psi 1} of $\psi^{1}$, we get
\begin{equation*}
\mathbb{E}\left[\left(\int_{s}^{t} u_{r} d B^{i}_{r}\right)^{2}\right]\lesssim  \left[|\tau-t|^{-2\gamma}|t-s|^{4H-2\gamma}\right]\wedge |t-s|^{4H-4\gamma}=\left|\psi^{1}_{(2H-\gamma,\gamma)}(\tau,t,s)\right|^{2}.
\end{equation*}
This completes the proof of Step 2.

Eventually, combining Step 1 and Step 2, relation \eqref{prob reg} holds for the increment $\bfz^{2,\tau}_{ts}$ as given in~\eqref{z2 ij}-\eqref{z2 ii}. This concludes the proof of \eqref{prob reg}. 
\end{proof}

\begin{remark}
The condition $\gamma<2H-1$, as stated in~\eqref{H, gamma, eta, zeta conditions},  is only invoked in order to properly bound the right hand side of~\eqref{a0}.
\end{remark}

\subsection{Properties of the Volterra rough path family $\{\bfz^{1,\tau},\bfz^{2,\tau}\}$}\label{Sec: prop of volterra RP}
We have constructed a Volterra rough path family $\{\bfz^{1,\tau},\bfz^{2,\tau}\}$ and we have also upper bounded their moment in Section~\ref{first level of the Volterra rough path} and Section \ref{second level of the Volterra rough path}. In this section, we will verify that the family $\{\bfz^{1,\tau},\bfz^{2,\tau}\}$ satisfies Definition \ref{hyp 2}. To this aim, we start by introducing the following notation:

\begin{notation}\label{def of An}
Let  $H\in(1/2, 1)$, and consider two parameters  $\alpha \in(0,H)$, $\gamma\in(0,2H-1)$, and a family $(\eta_{k}, \zeta_{k})$ for $1\le k\le N$ such that \eqref{condition parameters 2} is satisfied. The analytic property \ref{hyp 2}\ref{zj space} of Volterra rough path is also labeled in the following way:
\begin{eqnarray}\label{zj space wrt A}
{\bf z}^{j}\in \bigcap_{(\eta, \zeta)\in \mathcal{A}_N}\mathcal{V}^{(j\rho+\gamma,\gamma,\eta,\zeta)},
\end{eqnarray}
where $\mathcal{A}_N$ is given by 
\begin{eqnarray}\label{def of A}
\mathcal{A}_N=\{(\eta_k,\zeta_k); 1\le k\le N\}. 
\end{eqnarray}
 \end{notation}
Let us first check the analytic part of Definition \ref{hyp 2} for $\bfz^{1,\tau}$.
\begin{prop}\label{z1 space}
Let $H\in (1/2, 1)$, and consider two parameters $\alpha \in(0,H)$, $\gamma\in(0,2H-1)$ and a family $(\eta_k, \zeta_k)$ as in Notation \ref{def of An}.  Then the increment $\bfz^{1,\tau}$ introduced in Definition \ref{def of z1} is almost surely in the Volterra space $\mathcal{V}^{(\alpha,\gamma,\eta,\zeta)}(\Delta_{3};\mathbb{R}^{m})$ for any $(\eta,\zeta)\in \mathcal{A}_N$, where $\mathcal{A}_N$ is given in~\eqref{def of A} and $\mathcal{V}^{(\alpha,\gamma,\eta,\zeta)}(\Delta_{3};\mathbb{R}^{m})$ is introduced in Definition \ref{Volterra space}. In addition,  for any $p\ge 1$ and $(\eta,\zeta)\in \mathcal{A}_N$,  we have that 
\begin{equation}\label{2p moment z1}
\mathbb{E}\left[\|\bfz^{1}\|_{(\alpha,\gamma,\eta,\zeta)}^{2p}\right]<\infty.
\end{equation}
\end{prop}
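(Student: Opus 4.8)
The plan is to derive the claimed pathwise regularity from moment estimates by feeding Lemma \ref{z1 second moment} into the Volterra GRR inequality of Theorem \ref{Volterra GRR inequality}, exploiting that every increment of $\bfz^1$ lives in a fixed Wiener chaos. Since $\mathcal{A}_N$ is finite, it suffices to fix one pair $(\eta,\zeta)\in\mathcal{A}_N$ and to bound separately the two pieces $\|\bfz^1\|_{(\alpha,\gamma),1}$ and $\|\bfz^1\|_{(\alpha,\gamma,\eta,\zeta),1,2}$ making up the norm. Two structural facts drive the argument. First, because $\bfz^1=z$ is the first level, the generalized Chen relation \eqref{hyp b} of Definition \ref{hyp 2} with $j=1$ has an empty right-hand side, so $\delta\bfz^1=0$; hence the correction terms $\|\delta\bfz^1\|$ in \eqref{eq:bound 1}--\eqref{eq:bound 2} vanish identically. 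Second, for each fixed tuple the variables $\bfz^{1,\tau,i}_{ts}=B^i(h^\tau_{ts})$ and $\bfz^{1,\tau\tau',i}_{ts}$ belong to the first Wiener chaos of $B^i$, so the Gaussian equivalence of moments upgrades Lemma \ref{z1 second moment} to $2p$-moment bounds
\[
\mathbb{E}\big[|\bfz^{1,\tau,i}_{ts}|^{2p}\big]\lesssim_p \big|\psi^1_{(H,\gamma)}(\tau,t,s)\big|^{2p},\qquad
\mathbb{E}\big[|\bfz^{1,\tau\tau',i}_{ts}|^{2p}\big]\lesssim_p \big|\psi^{1,2}_{(H,\gamma,\eta',\zeta')}(\tau,\tau',t,s)\big|^{2p},
\]
valid for every $p\ge1$ and every admissible $(\eta',\zeta')$; the freedom in $(\eta',\zeta')$ will be crucial.

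Next I would apply Theorem \ref{Volterra GRR inequality} with target exponent $\kappa=\alpha$, auxiliary regularity chosen in $(\alpha,H)$, and integrability exponent $q$ taken larger than all the thresholds appearing below. As $\delta\bfz^1=0$, the two bounds collapse to $\|\bfz^1\|_{(\alpha,\gamma),1}\lesssim U^T_{(\alpha,\gamma),q,1}(\bfz^1)$ and $\|\bfz^1\|_{(\alpha,\gamma,\eta,\zeta),1,2}\lesssim U^T_{(\alpha,\gamma,\eta,\zeta),q,1,2}(\bfz^1)$. Raising to the power $2q$, taking expectations and using Fubini reduces everything to the convergence of two deterministic integrals. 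For the $1$-functional, the moment bound together with the observation that the branch of the minimum in \eqref{psi 1} depends only on the comparison of $|T-w|$ with $|w-v|$ gives $\mathbb{E}[|\bfz^{1,T}_{wv}|^{2q}]/|\psi^1_{(\alpha,\gamma)}(T,w,v)|^{2q}\lesssim |w-v|^{2q(H-\alpha)}$, whence
\[
\mathbb{E}\big[(U^T_{(\alpha,\gamma),q,1}(\bfz^1))^{2q}\big]\lesssim \int_{\Delta_2^T}|w-v|^{2q(H-\alpha)-2}\,dv\,dw<\infty
\]
as soon as $q>\tfrac{1}{2(H-\alpha)}$.

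The delicate point, and the main obstacle, is the $1,2$-functional: its integrand carries a factor $|r-r'|^{-2}$ that is \emph{not} integrable near $r=r'$ if one bounds the numerator by $\psi^{1,2}_{(H,\gamma,\eta,\zeta)}$ with the exponents matching the denominator, since the $|r-r'|^{2q\eta}$ factors then cancel exactly. The resolution is to use the moment bound with the shifted exponents $(\eta',\zeta')=(\eta+\tfrac1q,\zeta+\tfrac1q)$, which are admissible for $q$ large because $\alpha<H$ forces $\zeta\le\alpha-\gamma<H-\gamma$, leaving room for $\zeta+\tfrac1q\le H-\gamma$. Writing $\psi^{1,2}$ via \eqref{relation psi 1 and 12} as $|r-r'|^{\eta}|r'-w|^{-(\eta-\zeta)}\psi^1_{(\alpha,\gamma+\zeta)}(r',w,v)$, the shift produces an extra factor $|r-r'|^{1/q}$ which, after raising to the power $2q$, precisely cancels the offending $|r-r'|^{-2}$. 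The remaining ratio $\psi^1_{(H,\gamma+\zeta+1/q)}/\psi^1_{(\alpha,\gamma+\zeta)}$ is then handled by a short case analysis on whether $|r'-w|\ge|w-v|$ or $|r'-w|<|w-v|$; integrating out $r'$ over each region yields a factor comparable to $|w-v|^{-1}$ or $|w-v|$ respectively, and in both cases one is left with $\int_{\Delta_2^T}|w-v|^{2q(H-\alpha)-3}\,dv\,dw$, finite for $q>\tfrac{1}{H-\alpha}$. This gives $\mathbb{E}[(U^T_{(\alpha,\gamma,\eta,\zeta),q,1,2}(\bfz^1))^{2q}]<\infty$.

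Combining the two estimates produces $\mathbb{E}[\|\bfz^1\|_{(\alpha,\gamma,\eta,\zeta)}^{2q}]<\infty$ for every sufficiently large $q$, which already yields the almost sure membership $\bfz^1\in\mathcal{V}^{(\alpha,\gamma,\eta,\zeta)}$; intersecting over the finite family $\mathcal{A}_N$ gives the first assertion. Finally, the moment bound \eqref{2p moment z1} for an arbitrary $p\ge1$ follows by choosing the exponent $q\ge p$ above and invoking Jensen's inequality, since $\mathbb{E}[\|\bfz^1\|^{2p}_{(\alpha,\gamma,\eta,\zeta)}]\le \big(\mathbb{E}[\|\bfz^1\|^{2q}_{(\alpha,\gamma,\eta,\zeta)}]\big)^{p/q}<\infty$. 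I expect the bookkeeping in the $1,2$-functional—choosing the shift $1/q$, verifying admissibility of $(\eta',\zeta')$, and carrying out the case split that controls the $\psi^1$-ratio and the $|r'-w|$ singularity—to be the only genuinely technical step; the rest is hypercontractivity plus the already-established GRR machinery.
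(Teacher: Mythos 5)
Your proposal is correct and follows essentially the same route as the paper: kill the $\delta\bfz^1$ terms by additivity of $h^\tau_{ts}$ in the lower variables, upgrade the second-moment bounds of Lemma \ref{z1 second moment} to $2p$-moments by Gaussian hypercontractivity, feed them into Theorem \ref{Volterra GRR inequality} via Fubini, and handle the $|r-r'|^{-2}$ singularity in the $(1,2)$-functional with the shifted exponents $(\eta+\tfrac1p,\zeta+\tfrac1p)$, exactly as in \eqref{z1 GRR 12 1}. The only cosmetic difference is that you justify $\delta\bfz^1=0$ by appeal to the empty Chen sum rather than the direct computation $h^\tau_{ts}-h^\tau_{tm}-h^\tau_{ms}=0$; the substance is identical.
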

\begin{proof}
In this proof, we will turn to Proposition \ref{Volterra GRR inequality} in order to prove \eqref{2p moment z1}. According to the definition \eqref{Volterra norm} of Volterra norms, it suffices to show that $\mathbb{E}[\|\bfz^{1}\|^{2p}_{(\alpha,\gamma),1}]$ and $\mathbb{E}[\|\bfz^{1}\|^{2p}_{(\alpha,\gamma,\eta,\zeta),1,2}]$ are finite. We will separate the study of those two moments.

\noindent
\textit{Step 1: Estimate for the $2p$ moment of 1-norm.}
Let us first upper bound $\mathbb{E}[\|\bfz^{1}\|^{2p}_{(\alpha,\gamma),1}]$. Towards this aim, consider a fixed Volterra exponent $\gamma<\alpha<H$ and a parameter $p>1$ to be determined later on. Relation~\eqref{eq:bound 1} is then equivalent to 
\begin{equation}\label{z1 volterra bound}
\|\bfz^{1}\|^{2p}_{(\alpha,\gamma),1}\lesssim \left(U^T_{(\alpha,\gamma),1,p}(\bfz^{1})\right)^{2p}+\|\delta \bfz^{1}\|^{2p}_{(\alpha,\gamma),1}.
\end{equation}
Let us handle the term $\|\delta \bfz^{1}\|^{2p}_{(\alpha,\gamma),1}$ in the right hand side of \eqref{z1 volterra bound}. Gathering the definitions in~\eqref{z1 expression fbm} and~\eqref{delta}, for $(s,m,t)\in \Delta_{3}$ we have 
\begin{equation}\label{delta z1}
\delta_{m}\bfz^{1,\tau,i}_{ts}=B^{i}(h^{\tau}_{ts})-B^{i}(h^{\tau}_{tm})-B^{i}(h^{\tau}_{ms}).
\end{equation}
Moreover recalling that $h^{\tau}_{ts}(r)=(\tau-r)^{-\gamma}\mathbbm{1}_{[s,t]}(r)$, it is readily checked that $h^{\tau}_{ts}-h^{\tau}_{tm}-h^{\tau}_{ms}=0$. We thus get $\delta \bfz^{1,\tau,i}=0$ and \eqref{z1 volterra bound} is reduced to  
\begin{equation}\label{z1 volterra bound 1}
\|\bfz^{1}\|^{2p}_{(\alpha,\gamma),1}\lesssim \left(U^T_{(\alpha,\gamma),1,p}(\bfz^{1})\right)^{2p}.
\end{equation}
Taking expectations on both sides of \eqref{z1 volterra bound 1} and recalling the definition \eqref{U1} of $U^T_{(\alpha,\gamma),1,p}$, we obtain
\begin{equation}\label{z1 GRR 1}
\mathbb{E}\left[\|\bfz^{1}\|^{2p}_{(\alpha,\gamma),1}\right]\lesssim \mathbb{E}\left[ \int_{(v,w)\in \Delta^{\tau}_{2}}\frac{|z^{1,\tau}_{wv}|^{2p}}{|\psi^{1}_{(\alpha,\gamma)}(\tau,w,v)|^{2p}|w-v|^{2}}\,dv\,dw\right].
\end{equation}
Invoking Fubini's theorem and the fact that $\bfz^{1,\tau}_{wv}$ is a Gaussian random variable, we thus get 
\begin{equation}\label{z1 GRR}
\mathbb{E}\left[\|\bfz^{1}\|^{2p}_{(\alpha,\gamma),1}\right]\lesssim \int_{(v,w)\in \Delta^{\tau}_{2}}\frac{\mathbb{E}\left[|z^{1,\tau}_{wv}|^{2}\right]^{p}}{|\psi^{1}_{(\alpha,\gamma)}(\tau,w,v)|^{2p}|w-v|^{2}}dvdw.
\end{equation}
We can now apply \eqref{2nd moment of z1}, and hence relation \eqref{z1 GRR} reads
\begin{equation}\label{z1 GRR 1}
\mathbb{E}\left[\|\bfz^{1}\|^{2p}_{(\alpha,\gamma),1}\right]\lesssim \int_{(v,w)\in \Delta^{\tau}_{2}}\frac{|\psi^{1}_{(H,\gamma)}(\tau,w,v)|^{2p}}{|\psi^{1}_{(\alpha,\gamma)}(\tau,w,v)|^{2p}|w-v|^{2}}dvdw. 
\end{equation}
Recalling the definition \eqref{psi 1} of $\psi^{1}$, we obtain
\begin{equation}\label{z1 2p bound}
\mathbb{E}\left[\|\bfz^{1}\|^{2p}_{(\alpha,\gamma),1}\right]\lesssim \int_{(v,w)\in\Delta^{\tau}_{2}}\frac{\left[|\tau-w|^{-2p\gamma}|w-v|^{2pH}\right]\wedge|w-v|^{2p(H-\gamma)}}{\left(\left[|\tau-w|^{-2p\gamma}|w-v|^{2p\alpha}\right]\wedge|w-v|^{2p(\alpha-\gamma)}\right)|w-v|^{2}}dvdw. 
\end{equation}
In order to upper bound the right hand side of \eqref{z1 2p bound}, we split set $\Delta^{\tau}_{2}$ into two subsets 
\[
E_{1}=\left\{\left(v,w\right)\in\Delta^{\tau}_{2}\big|\, |\tau-w|\le |w-v|\right\},
\quad\text{and}\quad  
E_{2}=\left\{\left(v,w\right)\in\Delta^{\tau}_{2}\big|\, |\tau-w|> |w-v|\right\}.
\]
Then relation \eqref{z1 2p bound} is equivalent to 
\begin{equation}\label{split U z1}
 \mathbb{E}\left[\|\bfz^{1}\|^{2p}_{(\alpha,\gamma),1}\right]\lesssim I_{1}+I_{2},
 \end{equation}
where $I_{1}$ and $I_{2}$ are respectively given by
\begin{align}
&I_{1}=\int_{E_{1}}\frac{\left[|\tau-w|^{-2p\gamma}|w-v|^{2pH}\right]\wedge|w-v|^{2p(H-\gamma)}}{\left[|\tau-w|^{-2p\gamma}|w-v|^{2p\alpha+2}\right]\wedge|w-v|^{2p(\alpha-\gamma)+2}}dvdw,\label{I1 z1}
\\
&I_{2}=\int_{E_{2}}\frac{\left[|\tau-w|^{-2p\gamma}|w-v|^{2pH}\right]\wedge|w-v|^{2p(H-\gamma)}}{\left[|\tau-w|^{-2p\gamma}|w-v|^{2p\alpha+2}\right]\wedge|w-v|^{2p(\alpha-\gamma)+2}}dvdw.\label{I2 z1}
\end{align}
In the following, we will estimate $I_{1}$ and $I_{2}$ separately. 

In order to upper bound $I_{1}$, we first note that for any $(v,w)\in E_{1}$, we have $|\tau-w|\le |w-v|$. Thus 
\begin{equation}\label{inequality 1}
|w-v|^{2p(H-\gamma)}=|w-v|^{2pH}|w-v|^{-2p\gamma}\le |\tau-w|^{-2p\gamma}|w-v|^{2pH},
\end{equation}
and we trivially get 
\begin{equation}\label{integrand n}
\left[|\tau-w|^{-2p\gamma}|w-v|^{2pH}\right]\wedge|w-v|^{2p(H-\gamma)}=|w-v|^{2p(H-\gamma)}.
\end{equation}
In the same way, on $E_{1}$ we can write 
 \begin{equation}\label{integrand d}
\left[|\tau-w|^{-2p\gamma}|w-v|^{2p\alpha+2}\right]\wedge|w-v|^{2p(\alpha-\gamma)+2}=|w-v|^{2p(\alpha-\gamma)+2}.
\end{equation}
Plugging \eqref{integrand n} and \eqref{integrand d} into \eqref{I1 z1}, we get
\begin{equation}\label{E1 z1}
I_{1}=\int_{E_{1}}\frac{|w-v|^{2p(H-\gamma)}}{ |w-v|^{2p(\alpha-\gamma)+2}}= \int_{E_{1}}|w-v|^{2p(H-\alpha)-2}dvdw.
\end{equation}
Similarly, reverting the inequality in \eqref{inequality 1} we get that
\begin{equation}\label{E2 z1}
I_{2}=\int_{E_{2}}\frac{|\tau-w|^{-2p\gamma}|w-v|^{2pH}}{|\tau-w|^{-2p\gamma}|w-v|^{2p\alpha+2}}dvdw=\int_{E_{2}}|w-v|^{2p(H-\alpha)-2}dvdw.
\end{equation}
Now gathering \eqref{E1 z1} and \eqref{E2 z1} into \eqref{split U z1}, we end up with 
\begin{equation}\label{z1 1-norm bound}
 \mathbb{E}\left[\|\bfz^{1}\|^{2p}_{(\alpha,\gamma),1}\right]\lesssim \int_{(v,w)\in\Delta^{\tau}_{2}}|w-v|^{2p(H-\alpha)-2}dvdw. 
\end{equation}
The right hand side above is easily checked to be finite as long as $\alpha<H-\frac{1}{2p}$. 

\noindent
\textit{Step 2: Estimate for the $2p$ moment of $(1,2)$-norm.}
Next, we will show that $\mathbb{E}[\|\bfz^1\|_{(\alpha,\gamma,\eta,\zeta),1,2}]$ is finite. Similarly to the proof for the 1-norm in Step 1, considering again $p\ge 1$. Then resorting to~\eqref{eq:bound 2}, for $(\eta,\zeta)\in \mathcal{A}_N$ we get 
\begin{equation*}
\mathbb{E}\left[\|\bfz^{1}\|^{2p}_{(\alpha,\gamma,\eta,\zeta),1,2}\right]\lesssim \mathbb{E}\left[\left(U^T_{(\alpha,\gamma,\eta,\zeta),1,2,p}(\bfz^{1})\right)^{2p}\right].
\end{equation*}
As in Step 1, recalling the definition \eqref{U12} of $U^T_{(\alpha,\gamma,\eta,\zeta),1,2,p}(\bfz)$, invoking Fubini's theorem and thanks to the fact that $\bfz^{1,\tau \tau^{\prime}}$ is a Gaussian random variable, we obtain
\begin{equation}\label{z1 GRR 12}
\mathbb{E}\left[\|\bfz^{1}\|^{2p}_{(\alpha,\gamma,\eta,\zeta),1,2}\right]\lesssim \int_{(v,w,r^{\prime},r)\in\Delta^{\tau}_{4}}\frac{\mathbb{E}^{p}\left[|z^{1,rr^{\prime}}_{wv}|^{2}\right]}{|\psi^{1,2}_{(\alpha,\gamma,\eta,\zeta)}(r,r^{\prime},w,v)|^{2p}|w-v|^{2}|r-r^{\prime}|^{2}}\,dvdwdr^{\prime}dr.
\end{equation}
In addition, owing to \eqref{2nd moment of z1 2}, relation \eqref{z1 GRR 12} yields
\begin{equation}\label{z1 GRR 12 1}
\mathbb{E}\left[\|\bfz^{1}\|^{2p}_{(\alpha,\gamma,\eta,\zeta),1,2}\right]\lesssim \int_{(v,w,r^{\prime},r)\in\Delta^{\tau}_{4}}\frac{|\psi^{1,2}_{(H,\gamma,\eta+\frac{1}{p},\zeta+\frac{1}{p})}(r,r^{\prime},w,v)|^{2p}}{|\psi^{1,2}_{(\alpha,\gamma,\eta,\zeta)}(r,r^{\prime},w,v)|^{2p}|w-v|^{2}|r-r^{\prime}|^{2}} \, dvdwdr^{\prime}dr.
\end{equation}
We now recall the definition \eqref{psi 12} of $\psi^{1,2}$ and plug this identity into \eqref{z1 GRR 12 1}. We get 
 \begin{equation}\label{z1 GRR 12 2}
\mathbb{E}\left[\|\bfz^{1}\|^{2p}_{(\alpha,\gamma,\eta,\zeta),1,2}\right]\lesssim \int_{(v,w,r^{\prime},r)\in\Delta^{\tau}_{4}}
g_{(H,\alpha,\gamma,\eta,\zeta,p)}(r,r^{\prime},w,v) \, dvdwdr^{\prime}dr,
\end{equation}
where $g_{(H,\alpha,\gamma,\eta,\zeta,p)}(r,r^{\prime},w,v)$ is given by
\begin{align}\label{expression g}
&g_{(H,\alpha,\gamma,\eta,\zeta,p)}(r,r^{\prime},w,v)=
\\ 
&\frac{|r-r^{\prime}|^{2p(\eta+\frac{1}{p})}|r^{\prime}-w|^{-2p(\eta-\zeta)}\left(\left[|r^{\prime}-w|^{-2p(\gamma+\zeta+\frac{1}{p})}|w-v|^{2pH}\right]\wedge|w-v|^{2p(H-\gamma-\zeta-\frac{1}{p})}\right)}{|r-r^{\prime}|^{2p\eta}|r^{\prime}-w|^{-2p(\eta-\zeta)}\left(\left[|r^{\prime}-w|^{-2p(\gamma+\zeta)}|w-v|^{2p\alpha}\right]\wedge|w-v|^{2p(\alpha-\gamma-\zeta)}\right)|w-v|^{2}|r-r^{\prime}|^{2}}. \notag
\end{align}
Thanks to cancellations, we can simplify the right hand side of \eqref{expression g} as 
\begin{equation}\label{expression g 1}
g_{(H,\alpha,\gamma,\eta,\zeta,p)}(r,r^{\prime},w,v)=\frac{\left[|r^{\prime}-w|^{-2p(\gamma+\zeta)-2}|w-v|^{2pH}\right]
\wedge
|w-v|^{2p(H-\gamma-\zeta)-2}}{\left(\left[|r^{\prime}-w|^{-2p(\gamma+\zeta)}|w-v|^{2p\alpha}\right]
\wedge|w-v|^{2p(\alpha-\gamma-\zeta)}\right)|w-v|^{2}}.
\end{equation}
Plugging \eqref{expression g 1} into \eqref{z1 GRR 12 2}, we thus get 
\begin{multline}\label{z1 GRR 12 3}
\mathbb{E}\left[\|\bfz^{1}\|^{2p}_{(\alpha,\gamma,\eta,\zeta),1,2}\right]\\
\lesssim \int_{(v,w,r^{\prime},r)\in\Delta^{\tau}_{4}}
\frac{\left[|r^{\prime}-w|^{-2p(\gamma+\zeta)-2}|w-v|^{2pH}\right]\wedge|w-v|^{2p(H-\gamma-\zeta)-2}}{\left(\left[|r^{\prime}-w|^{-2p(\gamma+\zeta)}|w-v|^{2p\alpha}\right]\wedge|w-v|^{2p(\alpha-\gamma-\zeta)}\right)|w-v|^{2}} \,dvdwdr^{\prime}dr .
\end{multline}
Notice that the right hand side of \eqref{z1 GRR 12 3} is now very similar to the right hand side of \eqref{z1 2p bound}. Therefore with the same steps as for \eqref{z1 2p bound}-\eqref{z1 1-norm bound}, we obtain that
\begin{equation}\label{z1 12-norm bound}
 \mathbb{E}\left[\|\bfz^{1}\|^{2p}_{(\alpha,\gamma,\eta,\zeta),1,2}\right]\lesssim \int_{(v,w,r^{\prime},r)\in\Delta^{\tau}_{4}}|w-v|^{2p(H-\alpha)-4}dvdwdr^{\prime}dr<\infty.
\end{equation}
The right hand side of \eqref{z1 12-norm bound} is finite as long as $p>\frac{3}{2}(H-\alpha)^{-1}$, or equivalently $\alpha<H-\frac{3}{2p}$. 

\noindent
\textit{Step 3: \eqref{2p moment z1} holds for any $p\ge 1$.} Invoking \eqref{z1 volterra bound} we immediately have 
\begin{equation}\label{some 2p moment z1}
\mathbb{E}\left[\|\bfz^{1}\|^{2p}_{(\alpha,\gamma,\eta,\zeta)}\right]\lesssim\mathbb{E}\left[\|\bfz^{1}\|^{2p}_{(\alpha,\gamma),1}\right]+\mathbb{E}\left[\|\bfz^{1}\|^{2p}_{(\alpha,\gamma,\eta,\zeta),1,2}\right].
\end{equation}
Furthermore, combining \eqref{z1 1-norm bound} and \eqref{z1 12-norm bound} in the right hand side of \eqref{some 2p moment z1}, we end up with 
\begin{equation}\label{2p moment z1 copy}
\mathbb{E}\left[\|\bfz^{1}\|^{2p}_{(\alpha,\gamma,\eta,\zeta)}\right]<\infty, \quad \text{for any}\quad p > \frac{3}{2(H-\al)}. 
\end{equation}
Next we observe that if \eqref{2p moment z1 copy} is satisfied under the constraint $p > \frac{3}{2(H-\al)}$, it is also verified for all $p\ge 1$.
This yields the desired result \eqref{2p moment z1}. Moreover, it is easy to check that \eqref{2p moment z1 copy} implies 
\begin{equation*}
 \|\bfz^{1}\|_{(\alpha,\gamma,\eta,\zeta)}<\infty\quad \text{a.s.}
 \end{equation*}
This means that $\bfz^{1}$ is almost surely in the Volterra space $\mathcal{V}^{(\alpha,\gamma,\eta,\zeta)}(\Delta_{3};\mathbb{R}^{m})$.
\end{proof}

\begin{remark}
Note that the Volterra sewing lemma in \cite{HarangTindel} could indeed be used to construct $\bfz^1$ in a purely pathwise manner due to the H\"older regularity of the fBm combined with the assumption $H-\gamma>0$. This would then be constructed as the  pathwise integral  given by 
\begin{equation*}
\bfz^{1,\tau}_{ts}=\lim_{|\cp|\rightarrow 0} \sum_{[u,v]\in \cp} k(\tau,u)B_{vu}(\omega), 
\end{equation*}
where the sum converges by deterministic arguments and $\cp$ is a partition of $[s,t]$. In fact, it follows then directly that $\bfz^1\in \cv^{(\alpha,\gamma,\eta,\zeta)}$ for any  $ \alpha-\gamma>0$ and  $(\eta,\zeta)\in \ca_N$. 
However we have chosen to construct $\bfz^1$ by probabilistic means, our motivation being twofold: 
\begin{enumerate}[wide, labelwidth=!, labelindent=0pt, label=(\roman*)]
\setlength\itemsep{.03in}

\item
As the reader will see, the construction of the second order integral $\bfz^2$ is probabilistic by nature. Therefore it is natural and more consistent to construct $\bfz^1$ with the same kind of method.

\item
The probabilistic construction of $\bfz^1$ is not only instructive, but also provides useful probabilistic bounds for the moments of $\bfz^1$. Those estimates are of interest on their own.

\end{enumerate}
\end{remark}

With Proposition \ref{z1 space} in hand, we finish the study of $\bfz^{1}$ by proving the algebraic relation \eqref{hyp b} for $\bfz^{1,\tau}$ in more detail.
\begin{prop}\label{almost surely delta z1}
The increment $\bfz^{1,\tau,i}_{ts}$ as given in \eqref{z1 expression fbm} satisfies relation \eqref{hyp b}, that is almost surely we have
\begin{equation}\label{prob chen z1}
\delta_{m}\bfz^{1,\tau,i}_{ts}=0,\quad \text{for all}\quad (s,m,t,\tau)\in \Delta_{4} .
\end{equation}
\end{prop}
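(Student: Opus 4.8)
The plan is to read off \eqref{prob chen z1} from the linearity of the Wiener integral $h\mapsto B^i(h)$ together with the explicit form of the integrand $h^\tau_{ts}$, and then to upgrade the resulting identity (valid for each fixed tuple) to a statement holding simultaneously for all $(s,m,t,\tau)\in\Delta_4$ by invoking the continuity of $\bfz^1$ established in Proposition \ref{z1 space}.

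First I would fix a tuple $(s,m,t,\tau)\in\Delta_4$ and record the elementary pointwise identity on $[0,T]$. Recalling from Definition \ref{def of z1} that $h^\tau_{ts}(r)=(\tau-r)^{-\gamma}\mathbbm{1}_{[s,t]}(r)$, and using $\mathbbm{1}_{[s,t]}=\mathbbm{1}_{[s,m]}+\mathbbm{1}_{[m,t]}$ (the overlap at $r=m$ being a Lebesgue-null set), we have
\begin{equation*}
h^\tau_{ts}(r)=(\tau-r)^{-\gamma}\mathbbm{1}_{[s,m]}(r)+(\tau-r)^{-\gamma}\mathbbm{1}_{[m,t]}(r)=h^\tau_{ms}(r)+h^\tau_{tm}(r),
\end{equation*}
so that $h^\tau_{ts}=h^\tau_{tm}+h^\tau_{ms}$ as elements of $\mathcal{H}$. (Note $t<\tau$ on $\Delta_4$, so the weight $(\tau-\cdot)^{-\gamma}$ is bounded on $[s,t]$ and there is no issue with the membership of these functions in $\mathcal{H}$.)

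Next, since $h\mapsto B^i(h)$ is a linear isometry from $\mathcal{H}$ into $L^2(\Omega)$, I would apply it to this identity. Recalling the expression \eqref{z1 expression fbm} for $\bfz^{1,\tau,i}$, the computation \eqref{delta z1} of $\delta_m\bfz^{1,\tau,i}_{ts}$, and Notation \ref{notation of delta g}, we obtain
\begin{equation*}
\delta_m\bfz^{1,\tau,i}_{ts}=B^i(h^\tau_{ts})-B^i(h^\tau_{tm})-B^i(h^\tau_{ms})=B^i\big(h^\tau_{ts}-h^\tau_{tm}-h^\tau_{ms}\big)=B^i(0)=0,
\end{equation*}
where the equality holds almost surely for the fixed tuple under consideration.

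The only genuinely delicate point, and the one I would treat with care, is that the null set produced above depends on $(s,m,t,\tau)$, whereas \eqref{prob chen z1} asserts a single full-measure event on which the identity holds for \emph{all} tuples. To handle this I would choose a countable dense subset $D\subset\Delta_4$; the intersection over $D$ of the corresponding full-measure events is again of full measure, and on it $\delta_m\bfz^{1,\tau,i}_{ts}=0$ for every tuple in $D$. By Proposition \ref{z1 space}, almost surely $\bfz^1\in\mathcal{V}^{(\alpha,\gamma,\eta,\zeta)}(\Delta_3;\mathbb{R}^m)$, and in particular the map $(s,m,t,\tau)\mapsto\delta_m\bfz^{1,\tau,i}_{ts}$ is continuous on $\Delta_4$. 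Density then forces $\delta_m\bfz^{1,\tau,i}_{ts}=0$ for all $(s,m,t,\tau)\in\Delta_4$ on this event, which is exactly \eqref{prob chen z1}. Everything apart from this final null-set-and-continuity argument reduces to the one-line algebraic cancellation above.
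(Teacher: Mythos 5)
Your proof is correct and follows the same two-step strategy as the paper: the a.s.\ vanishing of $\delta_{m}\bfz^{1,\tau,i}_{ts}$ for each fixed tuple via the pointwise identity $h^{\tau}_{ts}=h^{\tau}_{tm}+h^{\tau}_{ms}$ and linearity of $h\mapsto B^{i}(h)$ (this is exactly \eqref{delta z1} in the paper), followed by a countable-dense-set plus continuity argument to get a single full-measure event. The only place you diverge is the source of the continuity. The paper does \emph{not} cite Proposition \ref{z1 space} here; it establishes continuity of $(t,\tau)\mapsto \bfz^{1,\tau}_{t}$ from scratch, by combining the second-moment bounds of Lemma \ref{z1 second moment} into $\|\bfz^{1,\tau\tau'}_{ts}\|_{L^{2}(\Omega)}\lesssim |\tau-\tau'|^{H-\gamma-\epsilon}+|t-s|^{H-\gamma}$, upgrading to $L^{p}$ by Gaussianity, and applying Kolmogorov's continuity criterion. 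Your route via Proposition \ref{z1 space} is workable in substance (finiteness of the $1$- and $(1,2)$-norms does control joint continuity, provided you use a pair $(\eta,\zeta)\in\mathcal{A}_N$ with $\eta=\zeta$ so that the factor $|\tau'-t|^{-(\eta-\zeta)}$ in $\psi^{1,2}$ does not degenerate near the diagonal), but it is logically less clean: the GRR machinery behind Proposition \ref{z1 space}, namely Lemma \ref{GRR lemma volterra}, is stated for \emph{continuous} increments, so continuity of (a version of) $\bfz^{1}$ is really an input to that proposition rather than a consequence of it. The paper's direct Kolmogorov argument is what actually supplies that continuous version, which is why it is carried out here rather than quoted.
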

\begin{proof}
For fixed $(s,m,t,\tau)\in \Delta_{4}$, we have obtained in \eqref{delta z1} that $\delta_{m}\bfz^{1,\tau,i}_{ts}=0$ almost surely. We will now prove that 
\begin{equation}\label{mapping z1}
(t,\tau)\in\Delta_{2}\mapsto \bfz^{1,\tau}_{t}\in \mathbb{R}^{m} \,\,\text{is a continuous function}.
\end{equation}
By a standard argument, which consists in taking limits on rational points, this will achieve our claim \eqref{prob chen z1}. 
The proof of \eqref{mapping z1} relies on Lemma \ref{z1 second moment}. Indeed, according to \eqref{2nd moment of z1} for $(s,t,\tau)$ in $\Delta_{3}$, we have 
\begin{equation}\label{E z1 1}
\mathbb{E}\left[\left(\bfz^{1,\tau}_{ts}\right)^{2}\right]\lesssim \left|t-s\right|^{2(H-\gamma)}.
\end{equation}
In the same way thanks to \eqref{2nd moment of z1 2} applied with $\zeta=\eta=H-\gamma-\epsilon$ with a small $\epsilon>0$, we get
\begin{equation}\label{E z1 12}
\mathbb{E}\left[\left(\bfz^{1,\tau\tau^{\prime}}_{ts}\right)^{2}\right]\lesssim \left|\tau-\tau^{\prime}\right|^{H-\gamma-\epsilon}\left|t-s\right|^{\epsilon}.
\end{equation}
Gathering \eqref{E z1 1} and \eqref{E z1 12}, we end up with the following inequality, valid for $(s,t,\tau^{\prime},\tau)\in \Delta_{4}$:
\begin{equation}\label{E z1 122}
\|  \bfz^{1,\tau\tau^{\prime}}_{ts} \|_{L^{2}(\Omega)}
%\mathbb{E}\left[\left(\bfz^{1,\tau\tau^{\prime}}_{ts}\right)^{2}\right]
\lesssim \left|\tau-\tau^{\prime}\right|^{H-\gamma-\epsilon}+\left|t-s\right|^{H-\gamma}.
\end{equation}
Moreover $\bfz^{1,\tau\tau^{\prime}}_{ts}$ is a Gaussian random variable. Hence the upper bound \eqref{E z1 122} can be extended to arbitrary norms in $L^{p}(\Omega)$. Therefore a standard application of Kolmogorov's criterion yields the continuity property \eqref{E z1 1} for $\bfz^{1,\tau}$. This finishes our proof.  
\end{proof}

We now turn to the analysis $\bfz^{2,\tau}$. We start this study by verifying the algebraic relation \eqref{hyp b} for $\bfz^{2,\tau}$. 
\begin{prop}\label{delta z2 relation}
The increment $\bfz^{2,\tau}_{ts}$ as given in \eqref{z2 ij}-\eqref{z2 ii} satisfies relation \eqref{hyp b}, that is 
\begin{equation}\label{prob chen}
\delta_{m}\bfz^{2,\tau,i,j}_{ts}=\bfz^{1,\tau,j}_{tm}\ast \bfz^{1,\cdot,i}_{ms}, \quad \text{for all}\,\,(s,m,t,\tau)\in \Delta_{4} \quad \text{a.s.}
\end{equation}
\end{prop}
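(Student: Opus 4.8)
The plan is to reduce the Chen-type identity to the decomposition
\[
\delta_m\bfz^{2,\tau,i,j}_{ts} = \int_m^t (\tau-r)^{-\gamma}\,\bfz^{1,r,i}_{ms}\,dB^j_r ,
\]
and then to recognise the right-hand side as the convolution product $\bfz^{1,\tau,j}_{tm}\ast\bfz^{1,\cdot,i}_{ms}$ furnished by Proposition \ref{one step conv}. To obtain the decomposition, recall from Remark \ref{relation zij and zii} and \eqref{def of u} that $\bfz^{2,\tau,i,j}_{ts}$ is the integral of $u^{\tau,i}_{ts}(r)=(\tau-r)^{-\gamma}\bfz^{1,r,i}_{rs}\mathbbm{1}_{[s,t]}(r)$ against $B^{j}$. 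Both the conditional Wiener integral (case $i\neq j$) and the Stratonovich integral (case $i=j$) are additive with respect to the integration interval, so splitting $[s,t]=[s,m]\cup[m,t]$ and noting that the integrands coincide on $[s,m]$ gives $\bfz^{2,\tau,i,j}_{ts}-\bfz^{2,\tau,i,j}_{ms}=\int_m^t(\tau-r)^{-\gamma}\bfz^{1,r,i}_{rs}dB^j_r$. Since $h^r_{rs}=h^r_{rm}+h^r_{ms}$ as elements of $\mathcal{H}$, the first level is additive in its lower variable, $\bfz^{1,r,i}_{rs}=\bfz^{1,r,i}_{rm}+\bfz^{1,r,i}_{ms}$, and subtracting $\bfz^{2,\tau,i,j}_{tm}=\int_m^t(\tau-r)^{-\gamma}\bfz^{1,r,i}_{rm}dB^j_r$ yields the announced formula by linearity of the integral in its integrand.

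The second and main part is to identify $\int_m^t(\tau-r)^{-\gamma}\bfz^{1,r,i}_{ms}dB^j_r=\int_m^t\bfz^{1,r,i}_{ms}\,d\bfz^{1,\tau,j}_r$ with the convolution product. By Proposition \ref{one step conv} the latter is the pathwise Volterra--Young limit $\lim_{|\cp|\to0}\sum_{[u',v']\in\cp}\bfz^{1,\tau,j}_{v'u'}\bfz^{1,u',i}_{ms}$ over partitions $\cp$ of $[m,t]$, which exists almost surely because $\bfz^1\in\mathcal{V}^{(H,\gamma,\eta,\zeta)}$ a.s.\ by Proposition \ref{z1 space}, with $\eta$ chosen so that $\eta>1-H$ as required in \eqref{condition parameters 2}. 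The stochastic integral is itself a limit of Riemann sums: the Wiener approximation \eqref{def of wiener} when $i\neq j$, and the Stratonovich regularisation \eqref{def of S}--\eqref{def of stra} when $i=j$. In each case the associated Riemann sum agrees to leading order with $\sum B^j_{v'u'}(\tau-u')^{-\gamma}\bfz^{1,u',i}_{ms}$, which is exactly the leading-order form of the Young sum since $\bfz^{1,\tau,j}_{v'u'}\approx B^j_{v'u'}(\tau-u')^{-\gamma}$; the residual terms are of higher order and vanish as $|\cp|\to0$. I would make this rigorous through a second-moment estimate (conditionally on $B^i$ when $i\neq j$) showing that the Young Riemann sums converge in $L^2$ to the stochastic integral, so that the a.s.\ Young limit and the $L^2$ limit must coincide.

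The hard part is the diagonal case $i=j$, where the integral is of Stratonovich type: one must check that no spurious It\^o--Stratonovich correction survives in the identification with the Young convolution, equivalently that for $H>\tfrac12$ the Stratonovich integral of the Volterra-regular integrand $r\mapsto(\tau-r)^{-\gamma}\bfz^{1,r,i}_{ms}$ equals its pathwise Young value. This is precisely where the regularity estimates of Lemma \ref{z1 second moment} and the condition $\eta>1-H$ are decisive, since they ensure the Young integral exists and is insensitive to the evaluation point, so that the symmetric Stratonovich scheme and the left-point Young scheme share the same limit. Finally, the identity obtained this way holds a.s.\ for each fixed tuple; to upgrade it to a single null set valid simultaneously for all $(s,m,t,\tau)\in\Delta_4$, I would use the almost-sure continuity of both sides in their four arguments—via a Kolmogorov argument as in \eqref{mapping z1} of Proposition \ref{almost surely delta z1}, together with the moment bounds of Proposition \ref{second moment of z2}—and conclude by taking limits along rational tuples.
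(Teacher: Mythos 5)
Your proposal is correct and follows essentially the same route as the paper: compute $\delta_{m}$ at the level of the integrand (the paper does this by showing $\delta_{m}u^{\tau,i}_{ts}(r)=h^{\tau}_{tm}(r)B^{i}(h^{r}_{ms})$, which is your additivity argument), identify the resulting integral with the convolution product by matching Riemann sums — the Wiener approximation \eqref{def of wiener} for $i\neq j$ and the Stratonovich approximation \eqref{def of S}--\eqref{def of stra} for $i=j$ — and then pass from fixed tuples to all of $\Delta_4$ by a Kolmogorov-type continuity argument. If anything, you are more explicit than the paper about reconciling the stochastic-integral Riemann sums with the left-point Young sums defining the convolution product, a step the paper passes over silently.
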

\begin{proof}
In order to show \eqref{prob chen}, we first prove that \eqref{prob chen} holds for fixed $(s,m,t)\in\Delta^{\tau}_{3}$. According to Definition \ref{the second level fBm Volterra rough path}, we will separate the proof into two cases $i\neq j$ and $i=j$. 

\noindent
\textit{Step 1: \eqref{prob chen} holds for fixed $(s,m,t)\in\Delta^{\tau}_{3}$ when $i\neq j$.} In this step, let us handle the case $i\neq j$. For any $(s,m,t)\in \Delta^{\tau}_{3}$, gathering \eqref{z2 ij} and \eqref{delta}, we have 
\begin{equation}\label{delta z2 ij expression}
\delta_{m} \bfz^{2,\tau,i,j}_{ts}= B^{j}\left(u^{\tau,i}_{ts}\right)-B^{j}\left(u^{\tau,i}_{tm}\right)-B^{j}\left(u^{\tau,i}_{ms}\right),
\end{equation}
where we recall that the process $u$ is defined by \eqref{def of u}. In order to calculate the right hand side of \eqref{delta z2 ij expression}, it is thus sufficient to compute $\delta_{m}u^{\tau,i}_{ts}=u^{\tau,i}_{ts}-u^{\tau,i}_{tm}-u^{\tau,i}_{ms}$. To this aim, according to the definition \eqref{def of u} of $u^{\tau,i}$ , we obtain 
\begin{eqnarray*}
\delta_{m}u^{\tau,i}_{ts}(r)&=&u^{\tau,i}_{ts}(r)-u^{\tau,i}_{tm}(r)-u^{\tau,i}_{ms}(r)
\\
&=&
\left(\tau-r\right)^{-\gamma}\bfz^{1,r,i}_{rs}\mathbbm{1}_{[s,t]}(r)-\left(\tau-r\right)^{-\gamma}\bfz^{1,r,i}_{rm}\mathbbm{1}_{[m,t]}(r)-\left(\tau-r\right)^{-\gamma}\bfz^{1,r,i}_{rs}\mathbbm{1}_{[s,m]}(r). 
\end{eqnarray*}
Resorting to the definition \eqref{z1 expression fbm} of $\bfz^{1,r,i}_{rs}$, we thus get
\begin{equation}\label{delta u expression}
\delta_{m}u^{\tau,i}_{ts}(r)=\left(\tau-r\right)^{-\gamma}\left(B^{i}(h^{r}_{rs})\mathbbm{1}_{[s,t]}(r)-B^{i}(h^{r}_{rm})\mathbbm{1}_{[m,t]}(r)-B^{i}(h^{r}_{rs})\mathbbm{1}_{[s,m]}(r)\right),
\end{equation} 
where the expression for $h$ is given in Definition \ref{def of z1}.
The right hand side of \eqref{delta u expression} can be simplified by elementary calculus. We thus let the patient reader check that we have
\begin{equation}\label{u expression}
\delta_{m}u^{\tau,i}_{ts}(r)=\left(\tau-r\right)^{-\gamma}B^{i}\left(h^{r}_{ms}\right)\mathbbm{1}_{[m,t]}(r).
\end{equation}
Furthermore, according to the definition of $h$ in Definition \ref{def of z1}, we have that $\left(\tau-r\right)^{-\gamma}\mathbbm{1}_{[m,t]}(r)=h^{\tau}_{tm}(r)$. Hence \eqref{u expression} can be recast as  
\begin{equation}\label{u expression 1}
\delta_{m}u^{\tau,i}_{ts}(r)=h^{\tau}_{tm}(r)B^{i}\left(h^{r}_{ms}\right).
\end{equation}
Plugging \eqref{u expression 1} into \eqref{delta z2 ij expression}, we thus have
\begin{equation}\label{delta z2 BjBi}
\delta_{m} \bfz^{2,\tau,i,j}_{ts}=B^{j}\left(h^{\tau}_{tm}B^{i}\left(h^{r}_{ms}\right)\right).
\end{equation}
Resorting to the property \eqref{def of wiener} of $B^{j}(h)$, the right hand side of \eqref{delta z2 BjBi} can be written as 
\begin{equation}\label{delta z2 BjBi Rie sum}
\delta_{m} \bfz^{2,\tau,i,j}_{ts}=\lim_{|\mathcal{P}|\to 0}\sum_{[r,v]}
B^{j}_{vr}\, h^{\tau}_{tm}(r)\, B^{i}(h^{r}_{ms}),
\end{equation}
where we recall that $\mathcal{P}$ is a generic partition of $[m,t]$ whose mesh $|\mathcal{P}|$ is converging to $0$, and where the limit holds in $L^{2}(\Omega)$. We now consider a subsequence of partitions in order to get an almost sure convergence in \eqref{delta z2 BjBi Rie sum}. According to the definition \eqref{convolution in 1d} of convolution product we end up with 
\begin{equation*}
\delta_{m} \bfz^{2,\tau,i,j}_{ts}
=\bfz^{1,\tau,j}_{tm}\ast\bfz^{1,\cdot,i}_{ms},
\end{equation*}
where we have used the definition \eqref{z1 expression fbm} of $\bfz^{1,\tau}$.

\noindent
\textit{Step 2: \eqref{prob chen} holds for fixed $(s,m,t)\in\Delta^{\tau}_{3}$ when $i=j$.} In this step, we will deal with the case $i=j$ for the the second level of the Volterra rough path. For any $(s,m,t)\in \Delta^{\tau}_{3}$, according to the definition \eqref{z2 ii} of $\bfz^{2,\tau,i,i}_{ts}$, we obtain
\begin{equation}\label{delta z2 ii}
\delta_{m}\bfz^{2,\tau,i,i}_{ts}=\delta_{m}\left(\int^{t}_{s}u^{\tau,i}_{ts}(r) dB^{i}_{r}\right),
\end{equation}
where the integral above is understood in the Stratonovich sense. According to \eqref{def of stra}, we have 
\begin{equation*}
\int_{s}^{t}u^{\tau,i}_{ts}(r)dB^{i}_{r}=\lim_{|\mathcal{P}|\to 0}S^{i,\mathcal{P}}_{ts},
\quad\text{where}\quad
S^{i,\mathcal{P}}_{ts}=\int^{t}_{s}u^{\tau,i}_{ts}(r) B^{i,\mathcal{P}}_{r}dr.
\end{equation*}
Now for a fixed $\mathcal{P}$, elementary algebraic manipulations show that 
\begin{equation}\label{delta S}
\delta_{m}S^{i,\mathcal{P}}_{ts}=\bfz^{1,\tau,i,\mathcal{P}}_{tm}\ast\bfz^{1,\cdot,i}_{ms},
\end{equation}
where $\bfz^{1,\tau,j,\mathcal{P}}_{tm}$ is defined by \eqref{def of stra}. Taking limits on both sides of \eqref{delta S} as $\mathcal{P}\to 0$, we get 
\begin{equation*}
\delta_{m}\bfz^{2,\tau,i,i}_{ts}=B^{i}\left(h^{\tau}_{tm}B^{i}(h^{r}_{ms})\right)=\bfz^{1,\tau,i}_{tm}\ast \bfz^{1,\cdot,i}_{ms},
\end{equation*}
which proves \eqref{prob chen} for $i=j$.

\noindent
\textit{Step 3: \eqref{prob chen} holds for all $(s,m,t)\in \Delta^{\tau}_{3}$.}
The proof of this fact, based on Kolmogorov's criterion for continuity of stochastic processes, is very similar to  the considerations in Proposition \ref{almost surely delta z1}. For sake of conciseness, it is omitted here. The proof of \eqref{prob chen} is now complete. 
\end{proof}
Using the knowledge gained from Proposition \ref{delta z2 relation}, we are now ready to check the regularity of the object $\delta \bfz^{2,\tau}$. 
\begin{prop}\label{delta z2 space}
Let $H\in (\frac{1}{2}, 1)$, and  consider the second level $\bfz^{2,\tau}$ of the Volterra rough path, as defined in 
\eqref{z2 ij}-~\eqref{z2 ii}. Recall that $\delta \bfz^{2,\tau}$ is  defined on $\Delta_{4}$, and we refer to Definition \ref{def of delta norm} for the definition of $\mathcal{V}^{(\alpha,\gamma,\eta,\zeta)}(\Delta_{4};\mathbb{R}^{m})$. Consider four parameters $\alpha \in(0,H)$, $\gamma\in(0,2H-1)$ and $\eta,\, \zeta\in [0,1]$,  satisfying relation \eqref{condition parameters 2}. Let also $\mathcal{A}_N$ be the set defined by \eqref{def of A}. Then almost surely, for all $(\eta, \zeta)\in \mathcal{A}_N$ we have
\begin{equation} \label{delta z2 volterra space}
\delta \bfz^{2,\tau}\in \mathcal{V}^{(2\rho+\gamma,\gamma,\eta,\zeta)}(\Delta_{4};\mathbb{R}^{m}),
\end{equation}
where we recall that $\rho=\alpha-\gamma$. 
Moreover, for all $p\ge1$ we have 
\begin{equation}\label{2p moment delta z}
\mathbb{E}\left[\|\delta \bfz^{2,\tau}\|_{(2\rho+\gamma,\gamma,\eta,\zeta)}^{2p}\right]<\infty,
\end{equation}
where the norm above is understood as in \eqref{Volterra norm delta}.
\end{prop}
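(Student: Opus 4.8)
The plan is to reduce the whole statement to the identity for $\delta\bfz^{2,\tau}$ established in Proposition \ref{delta z2 relation}, combined with the a priori convolution estimates of Proposition \ref{one step conv}. The essential observation is that, in contrast to $\bfz^{2,\tau}$ itself (a genuine iterated/Skorohod integral, whose leading-order norm will require the Volterra GRR machinery), the object $\delta\bfz^{2,\tau}$ is \emph{exactly} a convolution product of two copies of $\bfz^{1}$, so that no sewing or GRR argument is needed at this stage; GRR enters only indirectly, through Proposition \ref{z1 space}, to guarantee that $\bfz^{1}$ lives in the relevant Volterra space. Concretely, fix $\alpha\in(\gamma,H)$ and $(\eta,\zeta)\in\mathcal{A}_N$. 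By Proposition \ref{z1 space} we have $\bfz^{1}\in\mathcal{V}^{(\alpha,\gamma,\eta,\zeta)}$ almost surely, and since $(\eta,\zeta)\in\mathcal{A}_N$ satisfies \eqref{condition parameters 2} (in particular $\eta>1-\alpha$), the bilinear bounds \eqref{eq:z conv y bound}--\eqref{eq:z conv y bound 2} are applicable pathwise.

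First I would treat the $1$-norm. For $(s,m,t,\tau)\in\Delta_4$, Proposition \ref{delta z2 relation} gives $\delta_m\bfz^{2,\tau,i,j}_{ts}=\bfz^{1,\tau,j}_{tm}\ast\bfz^{1,\cdot,i}_{ms}$, and inserting $z=\bfz^{1,\cdot,j}$, $y=\bfz^{1,\cdot,i}$ into \eqref{eq:z conv y bound} yields
\[
|\delta_m\bfz^{2,\tau,i,j}_{ts}|\lesssim \|\bfz^1\|_{(\alpha,\gamma),1}\,\|\bfz^1\|_{(\alpha,\gamma,\eta,\zeta),1,2}\,\psi^1_{(2\rho+\gamma,\gamma)}(\tau,t,s).
\]
Dividing by $\psi^1_{(2\rho+\gamma,\gamma)}(\tau,t,s)$ and taking the supremum over $\Delta_4$ in the sense of \eqref{delta zj norm 1} bounds $\|\delta\bfz^2\|_{(2\rho+\gamma,\gamma),1}$ by $\|\bfz^1\|^2_{(\alpha,\gamma,\eta,\zeta)}$. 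Note that the output regularity $2\rho+\gamma$ coming out of \eqref{eq:z conv y bound} is precisely the index demanded in \eqref{delta z2 volterra space}.

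Next I would treat the $(1,2)$-norm. Using bilinearity of the convolution in its first argument together with the Chen relation, $\delta_m\bfz^{2,\tau\tau',i,j}_{ts}=\bfz^{1,\tau\tau',j}_{tm}\ast\bfz^{1,\cdot,i}_{ms}$, so \eqref{eq:z conv y bound 2} gives
\[
|\delta_m\bfz^{2,\tau\tau',i,j}_{ts}|\lesssim \|\bfz^1\|^2_{(\alpha,\gamma,\eta,\zeta),1,2}\,\psi^{1,2}_{(2\rho+\gamma,\gamma,\eta,\zeta)}(\tau,\tau',t,s).
\]
Dividing by $\psi^{1,2}_{(2\rho+\gamma,\gamma,\eta,\zeta)}$ and taking the supremum over $\Delta_5$ in the sense of \eqref{delta zj norm 12} bounds $\|\delta\bfz^2\|_{(2\rho+\gamma,\gamma,\eta,\zeta),1,2}$ by $\|\bfz^1\|^2_{(\alpha,\gamma,\eta,\zeta)}$. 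Adding the two contributions and recalling \eqref{Volterra norm delta} produces the master estimate $\|\delta\bfz^2\|_{(2\rho+\gamma,\gamma,\eta,\zeta)}\lesssim\|\bfz^1\|^2_{(\alpha,\gamma,\eta,\zeta)}$.

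The two conclusions then follow at once. Since $\|\bfz^1\|_{(\alpha,\gamma,\eta,\zeta)}<\infty$ almost surely by Proposition \ref{z1 space}, the master estimate yields \eqref{delta z2 volterra space}; raising it to the power $2p$ and taking expectations gives $\mathbb{E}[\|\delta\bfz^2\|^{2p}_{(2\rho+\gamma,\gamma,\eta,\zeta)}]\lesssim\mathbb{E}[\|\bfz^1\|^{4p}_{(\alpha,\gamma,\eta,\zeta)}]$, which is finite by \eqref{2p moment z1} (valid for every exponent, here $4p$), proving \eqref{2p moment delta z}. Because $\mathcal{A}_N$ is a finite family, the exceptional null sets can be intersected to obtain the statement simultaneously for all $(\eta,\zeta)\in\mathcal{A}_N$. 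I do not expect a genuine obstacle here; the only points requiring care are the bookkeeping of parameters, namely verifying that $(\eta,\zeta)\in\mathcal{A}_N$ meets the hypotheses \eqref{condition parameters 2} of Proposition \ref{one step conv} and that the resulting index $2\rho+\gamma$ matches the target space, and the use of bilinearity to pass from the $\tau$-bound to the $\tau\tau'$-bound.
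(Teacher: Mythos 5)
Your proposal is correct and follows essentially the same route as the paper: reduce $\delta\bfz^{2,\tau}$ to the convolution product $\bfz^{1,\tau}_{tu}\ast\bfz^{1,\cdot}_{us}$ via Proposition \ref{delta z2 relation}, bound it pathwise by \eqref{eq:z conv y bound}--\eqref{eq:z conv y bound 2} to obtain $\|\delta\bfz^{2}\|_{(2\rho+\gamma,\gamma,\eta,\zeta)}\lesssim\|\bfz^{1}\|^{2}_{(\alpha,\gamma,\eta,\zeta)}$, and conclude via the $4p$-moment bound from Proposition \ref{z1 space}. No discrepancies worth noting.
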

\begin{proof}
In this proof, we will show that \eqref{2p moment delta z} holds for any $p\ge 1$, and it is easy to check that \eqref{delta z2 volterra space} is a direct consequence of \eqref{2p moment delta z}. According to the definition \eqref{Volterra norm delta}, it is necessary to prove that $\mathbb{E}[\|\delta \bfz^{2,\tau}\|^{2p}_{(2\rho+\gamma,\gamma),1}]$ and $\mathbb{E}[\|\delta \bfz^{2,\tau}\|^{2p}_{(2\rho+\gamma,\gamma,\eta,\zeta),1,2}]$ are finite. Thanks to \eqref{prob chen}, for any $(s,u,t,\tau)\in \Delta_{4}$ we have 
\begin{equation}\label{delta z2 1}
\delta_u \bfz^{2,\tau}_{ts}=\bfz^{1,\tau}_{tu}\ast \bfz^{1,\cdot}_{us}.
\end{equation}
Hence resorting to \eqref{eq:z conv y bound}, we get
\begin{equation}\label{delta z2}
\left|\delta_u \bfz^{2,\tau}_{ts}\right|=\left|\bfz^{1,\tau}_{tu}\ast \bfz^{1,\cdot}_{us}\right|\lesssim \|\bfz^{1}\|_{(\alpha,\gamma),1}\|\bfz^{1}\|_{(\alpha,\gamma,\eta,\zeta),1,2} \,
\psi^{1}_{(2\rho+\gamma,\gamma)}(\tau,t,s).
\end{equation}
Dividing by $\psi^{1}_{2\rho+\gamma,\gamma}(\tau,t,s)$ on both sides of \eqref{delta z2}, and then taking supremum over $(s,u,t,\tau)\in~\Delta_{4}$, we obtain
\begin{equation}\label{delta z2 bound 1}
\|\delta \bfz^{2}\|_{(2\rho+\gamma,\gamma),1}\le  \|\bfz^{1}\|_{(\alpha,\gamma),1}\|\bfz^{1}\|_{(\alpha,\gamma,\eta,\zeta),1,2}\,\,,
\end{equation}
where we have used the definition \eqref{delta zj norm 1} of $1-$norm for the Volterra space $\mathcal{V}^{(2\rho+\gamma,\gamma,\eta,\zeta)}(\Delta_{4};\mathbb{R}^{m})$.\\
Similarly, resorting to \eqref{eq:z conv y bound 2} and \eqref{delta zj norm 12}, for any $(s,u,t,\tau^{\prime},\tau)\in \Delta_{5}$ and $(\eta,\zeta)\in \mathcal{A}_N$. We let the patient reader check that we have
\begin{equation}\label{delta z2 bound 12}
\|\delta \bfz^{2}\|_{(2\rho+\gamma,\gamma,\eta,\zeta),1,2}\le  \|\bfz^{1}\|_{(\alpha,\gamma,\eta,\zeta),1,2}\|\bfz^{1}\|_{(\alpha,\gamma,\eta,\zeta),1,2}.
\end{equation}
Combining \eqref{delta z2 bound 1} and \eqref{delta z2 bound 12}, and recalling the definition \eqref{Volterra norm delta} again, we thus obtain
\begin{equation}\label{delta z2 bound}
\|\delta \bfz^{2}\|_{(2\rho+\gamma,\gamma,\eta,\zeta)}=\|\delta \bfz^{2}\|_{(2\rho+\gamma,\gamma),1}+\|\delta \bfz^{2}\|_{(2\rho+\gamma,\gamma,\eta,\zeta),1,2}\lesssim  \|\bfz^{1}\|^{2}_{(\alpha,\gamma,\eta,\zeta)}.
\end{equation}
Taking $2p$ moments on both sides of \eqref{delta z2 bound}, we thus get
\begin{equation}\label{delta z2 moment bound}
\mathbb{E}\left[\|\delta \bfz^{2}\|^{2p}_{(2\rho+\gamma,\gamma,\eta,\zeta)}\right]\lesssim \mathbb{E}\left[  \|\bfz^{1}\|^{4p}_{(\alpha,\gamma,\eta,\zeta)}\right].
\end{equation}
According to \eqref{2p moment z1}, the right hand side of \eqref{delta z2 moment bound} is finite. This means that we have
\begin{equation}
\mathbb{E}\left[\|\delta \bfz^{2}\|^{2p}_{(2\rho+\gamma,\gamma,\eta,\zeta)}\right]<\infty, \qquad \text{for any }\qquad p\ge1.
\end{equation}
This is the desired result. 
\end{proof}
Finally, let us close this section by giving the proof of the regularity result for $\bfz^{2,\tau}$ .
\begin{prop}\label{z2 space}
Under the same assumption as for Proposition \ref{delta z2 space}, the second level of the Volterra rough path $\bfz^{2,\tau}$ introduced in \eqref{z2 ij}-\eqref{z2 ii} is almost surely an element of the space $\mathcal{V}^{(2\rho+\gamma,\gamma,\eta,\zeta)}(\Delta_{3};\mathbb{R}^{m})$ for any $\alpha,\gamma \in(0,1)$ and $\eta, \zeta\in [0,1]$  satisfying  relation \eqref{condition parameters 2}. Furthermore, for  $H-~\alpha>~\frac{1}{4p}$ and any $(\eta, \zeta)\in \mathcal{A}_N$ (where $\mathcal{A}_N$ is given in \eqref{def of A}), we have that 
\begin{equation}\label{2p moment z2}
\mathbb{E}\left[\|\bfz^{2}\|_{(2\rho+\gamma,\gamma,\eta,\zeta)}^{2p}\right]<\infty.
\end{equation}

\end{prop}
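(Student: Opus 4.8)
The plan is to reduce the bound \eqref{2p moment z2} to the finiteness of the two moments $\mathbb{E}[\|\bfz^2\|^{2p}_{(2\rho+\gamma,\gamma),1}]$ and $\mathbb{E}[\|\bfz^2\|^{2p}_{(2\rho+\gamma,\gamma,\eta,\zeta),1,2}]$, exactly as in the proof of Proposition \ref{z1 space}, and to control each of them through the Volterra Garsia--Rodemich--Rumsey inequality of Theorem \ref{Volterra GRR inequality} applied with target regularity $\kappa=2\rho+\gamma$. The decisive new feature, compared with the treatment of $\bfz^1$, is that $\delta\bfz^2$ no longer vanishes; instead its regularity has already been quantified in Proposition \ref{delta z2 space}, so the $\delta\bfz$ terms appearing in \eqref{eq:bound 1}--\eqref{eq:bound 2} are under control. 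A second new feature is that $\bfz^2$ is no longer Gaussian: by construction \eqref{z2 ij}--\eqref{z2 ii} each component lives in the sum of the first two Wiener chaoses (the Skorohod part in the second chaos, the It\^o--Stratonovich trace term being deterministic), so Gaussian hypercontractivity still yields $\mathbb{E}[|\bfz^{2,rr'}_{wv}|^{2p}]\lesssim_p(\mathbb{E}[|\bfz^{2,rr'}_{wv}|^2])^p$, and likewise for the diagonal increments. This is what lets us pass from $2p$-th moments of the $U$-functionals to the second-moment estimates \eqref{prob reg}--\eqref{prob reg 2}.

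For the $1$-norm I would invoke \eqref{eq:bound 1} with $\kappa=2\rho+\gamma$, giving $\|\bfz^2\|_{(2\rho+\gamma,\gamma),1}\lesssim U^T_{(2\rho+\gamma,\gamma),1,p}(\bfz^2)+\|\delta\bfz^2\|_{(2\rho+\gamma,\gamma),1}$. Taking $2p$-th moments, the last term is finite by \eqref{2p moment delta z}. For the first term, Fubini together with hypercontractivity and the bound \eqref{prob reg} turns $\mathbb{E}[(U^T_{(2\rho+\gamma,\gamma),1,p}(\bfz^2))^{2p}]$ into the deterministic integral $\int_{\Delta_2^\tau}\frac{|\psi^1_{(2H-\gamma,\gamma)}(\tau,w,v)|^{2p}}{|\psi^1_{(2\rho+\gamma,\gamma)}(\tau,w,v)|^{2p}|w-v|^2}\,dv\,dw$. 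Splitting the domain according to whether $|\tau-w|\le|w-v|$ or not, exactly as in \eqref{split U z1}--\eqref{z1 1-norm bound}, the two $\psi^1$'s simplify and the integrand is dominated by $|w-v|^{4p(H-\alpha)-2}$, which is integrable on the two-dimensional simplex precisely when $4p(H-\alpha)>1$, i.e. $H-\alpha>\frac1{4p}$. This is the source of the stated threshold.

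For the $(1,2)$-norm I would use \eqref{eq:bound 2}. A small technical adjustment is needed here: the $\delta\bfz^2$ term requires $\delta\bfz^2$ to sit in a space of regularity strictly above the target $\kappa=2\rho+\gamma$, so I would apply Proposition \ref{delta z2 space} with an auxiliary exponent $\bar\alpha\in(\alpha,H)$, placing $\delta\bfz^2$ in $\mathcal{V}^{(2\bar\alpha-\gamma,\gamma,\eta+\frac1p,\zeta+\frac1p)}$ with finite moments and making the factor $T^{2+2(\bar\alpha-\alpha)-\frac1p}$ in \eqref{eq:bound 2} harmless. The remaining term $\mathbb{E}[(U^T_{(2\rho+\gamma,\gamma,\eta,\zeta),1,2,p}(\bfz^2))^{2p}]$ is again reduced, via Fubini, hypercontractivity and \eqref{prob reg 2}, to a four-fold deterministic integral. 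Using the factorization \eqref{relation psi 1 and 12}, the $|r-r'|$- and $|r'-w|$-powers cancel between numerator and denominator, provided one bounds the second moment by $\psi^{1,2}_{(2H-\gamma,\gamma,\eta+\frac1p,\zeta+\frac1p)}$; the shift in $\eta,\zeta$ is precisely what produces the $|r-r'|$-power absorbing the $|r-r'|^{-2}$ weight in \eqref{U12}, after which one is left with an integral of the same shape as in the $1$-norm case and the same condition $H-\alpha>\frac1{4p}$.

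Finally, finiteness of the $2p$-th moment of $\|\bfz^2\|_{(2\rho+\gamma,\gamma,\eta,\zeta)}$ gives $\|\bfz^2\|_{(2\rho+\gamma,\gamma,\eta,\zeta)}<\infty$ almost surely, i.e. $\bfz^2\in\mathcal{V}^{(2\rho+\gamma,\gamma,\eta,\zeta)}$, for every $(\eta,\zeta)\in\mathcal{A}_N$; the continuity of $(s,t,\tau)\mapsto\bfz^{2,\tau}_{ts}$ needed to run the GRR lemmas follows from the moment bounds and Kolmogorov's criterion as in Proposition \ref{almost surely delta z1}. I expect the main obstacle to be the $(1,2)$-norm estimate: one must simultaneously (i) feed $\delta\bfz^2$ into Theorem \ref{Volterra GRR inequality} at a regularity slightly above the target, which forces the auxiliary exponent $\bar\alpha$ and a check that the induced lower bound on $p$ stays compatible with $H-\alpha>\frac1{4p}$, and (ii) tune the parameter shift in the second-moment bound so that the singular weights $|w-v|^{-2}|r-r'|^{-2}$ in the definition of $U^{1,2}$ are exactly compensated. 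The non-Gaussian, finite-chaos nature of $\bfz^2$ is conceptually important but, thanks to hypercontractivity, introduces no genuine difficulty.
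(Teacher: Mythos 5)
Your proposal follows essentially the same route as the paper: apply the Volterra GRR bounds \eqref{eq:bound 1}--\eqref{eq:bound 2} with target $\kappa=2\rho+\gamma$, control the $\delta\bfz^2$ contributions via Proposition \ref{delta z2 space}, and reduce the $U$-functional moments by Fubini and equivalence of $L^p$ norms on a fixed Wiener chaos to the second-moment estimates \eqref{prob reg}--\eqref{prob reg 2}, arriving at the integrability condition $4p(H-\alpha)>1$. Your explicit introduction of an auxiliary exponent $\bar\alpha\in(\alpha,H)$ to ensure $\delta\bfz^2$ sits strictly above the target regularity is a careful reading of the hypotheses of Theorem \ref{Volterra GRR inequality} that the paper leaves implicit, but it does not change the argument.
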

\begin{proof}
Our strategy to prove this Proposition is the same as for the proof of Proposition \ref{z1 space}, that is we will appeal to the Volterra GRR Lemma \ref{Volterra GRR inequality} to show that $ \mathbb{E}[\|\bfz^{2}\|^{2p}_{(2\rho+\gamma,\gamma),1}]$ and $\mathbb{E}[\|\bfz^{2}\|^{2p}_{(2\rho+\gamma,\gamma,\eta,\zeta),1,2}]$ are both finite. Let us first show that $ \mathbb{E}[\|\bfz^{2}\|^{2p}_{(2\rho+\gamma,\gamma),1}]$ is finite. To this aim, consider a fixed Volterra exponent $\alpha\in (\gamma,H)$ and a parameter $p\ge1$ to be determined later. Then relation \eqref{eq:bound 1} reads
\begin{equation}\label{z2 ij bound 1}
 \|\bfz^{2}\|^{2p}_{(2\rho+\gamma,\gamma),1}\lesssim \left(U^T_{(2\rho+\gamma,\gamma),1,p}(\bfz^{2})\right)^{2p} +\|\delta \bfz^{2}\|^{2p}_{(2\rho+\gamma,\gamma),1}. 
\end{equation}
Taking expectations on both sides of \eqref{z2 ij bound 1}, we obtain
\begin{equation}\label{z2 ij bound expectation 1}
\mathbb{E}\left[ \|\bfz^{2}\|^{2p}_{(2\rho+\gamma,\gamma),1}\right]\lesssim \mathbb{E}\left[\left(U^T_{(2\rho+\gamma,\gamma),1,p}(\bfz^{2})\right)^{2p}\right] +\mathbb{E}\left[\|\delta \bfz^{2}\|^{2p}_{(2\rho+\gamma,\gamma),1}\right]\,.
\end{equation}
Recalling \eqref{2p moment delta z}, the second term of the right hand side of \eqref{z2 ij bound expectation 1} is finite. In order to upper bound the left hand side of \eqref{z2 ij bound expectation 1}, it is thus sufficient to estimate the first term $\mathbb{E}[(U^T_{(2\rho+\gamma,\gamma),1,p}(\bfz^{2}))^{2p}]$. Toward this aim, we set 
\begin{equation*}
A_{\ga,\rho,p}=\mathbb{E}\left[\left(U^T_{(2\rho+\gamma,\gamma),1,p}(\bfz^{2})\right)^{2p}\right].
\end{equation*}
Recalling the definition \eqref{U12} of $U^T_{(2\rho+\gamma,\gamma),1,p}$, we have
\begin{equation}\label{def A}
A_{\ga,\rho,p}=\mathbb{E}\left[\int_{(v,w)\in \Delta^{\tau}_{2}}\frac{|z^{2,\tau}_{wv}|^{2p}}{|\psi^{1}_{(2\rho+\gamma,\gamma)}(\tau,w,v)|^{2p}|w-v|^{2}}\,dv\,dw\right].
\end{equation}
Observe that $\bfz^{2,\tau}_{wv}$ is an element of the second chaos of the fBm $B$, on which all $L^{p}$ norms are equivalent. Hence invoking Fubini's theorem, we get 
\begin{equation}\label{A bound 1}
A_{\ga,\rho,p} \lesssim \int_{(v,w)\in \Delta^{\tau}_{2}}\frac{\mathbb{E}^{p}\left[|z^{2,\tau}_{wv}|^{2}\right]}{|\psi^{1}_{(2\alpha-\gamma,\gamma)}(\tau,w,v)|^{2p}|w-v|^{2}}\,dv\,dw.
\end{equation}
We now apply \eqref{prob reg} to the right hand side of \eqref{A bound 1}, we obtain
\begin{equation}\label{z2 GRR}
A_{\ga,\rho,p}\lesssim \int_{(v,w)\in \Delta^{\tau}_{2}}\frac{|\psi^{1}_{(2H-\gamma,\gamma)}(\tau,w,v)|^{2p}}{|\psi^{1}_{(2\rho+\gamma,\gamma)}(\tau,w,v)|^{2p}|w-v|^{2}}\,dvdw. 
\end{equation}
Notice that relation \eqref{z2 GRR} is very similar to \eqref{z1 GRR 1}. Hence we can carry out the same procedure going from \eqref{z1 GRR 1} to \eqref{z1 1-norm bound} in the proof of Proposition \ref{z1 space}. We end up with 
\begin{equation}\label{A bound}
A_{\ga,\rho,p}\lesssim \int_{(v,w)\in \Delta^{\tau}_{2}}|w-v|^{4p(H-\alpha)-2}\,dvdw.
\end{equation} 
Eventually plugging \eqref{A bound} into \eqref{z2 ij bound expectation 1}, we get
\begin{equation}\label{z2 2pmoment bound}
\mathbb{E}\left[ \|\bfz^{2}\|^{2p}_{(2\rho+\gamma,\gamma),1}\right]\lesssim  \int_{(v,w)\in \Delta^{\tau}_{2}}|w-v|^{4p(H-\alpha)-2}\,dvdw+\mathbb{E}\left[\|\delta \bfz^{2}\|^{2p}_{(2\rho+\gamma,\gamma),1}\right].
\end{equation}
Recalling \eqref{2p moment delta z} again, the right hand side above is easily checked to be finite as long as $p>~\frac{1}{4}(H-~\alpha)^{-1}$. Considering such a $p$ (which is allowed since $z^{2,\tau}_{wv}$ admits moments of all orders), we thus obtain  
\begin{equation}\label{z2 2p moment bound 1}
\mathbb{E}\left[ \|\bfz^{2}\|^{2p}_{(2\rho+\gamma,\gamma),1}\right]<\infty.
\end{equation}
Next we will show that $\mathbb{E}[\|\bfz^{2}\|^{2p}_{(2\rho+\gamma,\gamma,\eta,\zeta),1,2}]$ is finite for any $(\eta,\zeta)\in \mathcal{A}_N$. Similarly to the steps going from \eqref{z2 ij bound 1} to \eqref{z2 2pmoment bound}, we resort to \eqref{eq:bound 2} in order to get 
\begin{multline}\label{z2 2p moment bound 12}
\mathbb{E}\left[ \|\bfz^{2}\|^{2p}_{(2\rho+\gamma,\gamma,\eta,\zeta),1,2}\right]\lesssim  \int_{(v,w,r^{\prime},r)\in \Delta^{\tau}_{4}}|w-v|^{4p(H-\alpha)-4}\,dvdwdr^{\prime}dr
\\
+\mathbb{E}\left[\|\delta \bfz^{2,\tau}\|^{2p}_{(2\rho+\gamma,\gamma,\eta+\frac{1}{p},\zeta+\frac{1}{p}),1,2}\right].
\end{multline}
Owing to \eqref{2p moment delta z}, the right hand side of \eqref{z2 2p moment bound 12} is finite as long as $p>\frac{3}{4(H-\alpha)}$. Eventually combining~\eqref{z2 2p moment bound 1} and~\eqref{z2 2p moment bound 12}, and recalling our definition \eqref{Volterra norm} of $(\alpha,\gamma,\eta,\zeta)$-norm, we trivially get that  
\begin{equation}\label{z2 moment bound}
\mathbb{E}\left[\|\bfz^{2}\|^{2p}_{(2\rho+\gamma,\gamma,\eta,\zeta)}\right]<\infty.
\end{equation}
This completes the proof.
\end{proof}
\section{Volterra rough path driven by Brownian motion}\label{sec:VRP for BM}
In this section we will construct Volterra type iterated integrals with respect to a Brownian motion. In this case our stochastic integrals will  be interpreted in the  It\^o sense. The reason for this is that when the singularity exponent $\gamma$ is strictly positive, the  standard It\^o-Stratonovich correction diverge (as will be illustrated in more detail later in this section).  However, in order to take advantage of the computations performed in Section \ref{Volterra rough path driven by fractional Brownian motion}, we will stick to a Malliavin calculus setting. We start by highlighting in Section \ref{Analysis on wiener space} the differences between basic stochastic analysis notions in the fBm context with $H>1/2$ and $H=1/2$ (representing the Brownian motion).

\subsection{Analysis on the Wiener space}\label{Analysis on wiener space}
The Malliavin calculus preliminaries for a Brownian motion are similar to what we wrote in Section \ref{Malliavin calculus preliminaries} for a fBm. Keeping most of our previous notation, let us just highlight the main differences between the two situations. 

\begin{enumerate}[wide, labelwidth=!, labelindent=0pt, label=(\roman*)]

\item
Our notation for the Brownian driving process is $W=(W^{1},\ldots,W^{m})$. The covariance function for each independent component is  $R(s,t)=s\wedge t$.

\smallskip

\item
The space $\mathcal{H}$ is $L^{2}([0,T])$, with inner product
\begin{equation}\label{isometry for W}
\langle f,g \rangle_{\mathcal{H}}=\int^{T}_{0}f_{u}g_{u}du.
\end{equation}

\item
Let $u$ be an adapted process in $L^2([0,T]\times \Omega)$. Then the It\^o integral $\int^{T}_{0}u_{t} \, \delta^{\diamond}W^{j}_{t}$ is well defined for all $j=1, \ldots, m$. It enjoys the It\^o isometry property 
\begin{eqnarray}\label{ito isometry}
\mathbb{E}\left[\left(\int^{T}_{0}u_{t} \, \delta^{\diamond}W^{j}_{t}\right)^2\right]=\int^{T}_{0} \mathbb{E}\left[u^2_{t}\right] dt.
\end{eqnarray}
Observe that for $L^2$-adapted processes, It\^o and Skorokhod's integrals coincide. This explains why we still use the symbol $\delta^{\diamond}$ in the left hand side of \eqref{ito isometry}. 
\end{enumerate}
\subsection{Definition of the Volterra rough path}
In this section we will construct and estimate iterated integrals in case of a driving noise given by a $m$-dimensional Brownian motion $W$. This case is rougher than in Section~\ref{Volterra rough path driven by fractional Brownian motion}, although Volterra stochastics differential equations are arguably already addressed in the classical reference \cite{Nualart}. Nevertheless, it should be noticed that a rough path point of view on equation \eqref{eq:intro Volterra} driven by a Brownian motion is still useful, due to convenient continuity properties of the solution map with respect to the Volterra signature.  We first introduce the definition of the first level Volterra rough path over Brownian motion $W$, which is a mere elaboration of Definition \ref{def of z1}.
\begin{Def}\label{def of z1 B}
Consider a Brownian motion $W :[0,T]\rightarrow \mathbb{R}^m$ and a function $h$ of the form $h^{\tau}_{ts}(r)=(\tau-r)^{-\gamma}\mathbbm{1}_{[s,t]}(r)$ with $\gamma<1/2$. Then for $(s,t,\tau)\in \Delta_{3}$ we define the increment 
    $\bfz^{1,\tau,i}_{ts}=\int_{s}^{t} \left(\tau-r\right)^{-\gamma}dW^{i}_{r}$ as a Wiener integral of the form 
\begin{equation}\label{z1 expression B}
\bfz_{ts}^{1,\tau,i}:=W^{i}(h^{\tau}_{ts}).
\end{equation}
\end{Def}
Similarly to what we did in Lemma \ref{z1 second moment}, let us find a bound for second moment of $\bfz^{1}$.
\begin{lemma}\label{z1 second moment B}
Consider the Volterra rough path $\bfz^{1}$ as given in \eqref{z1 expression B}, and three parameters $\gamma~\in~(0,1)$, and $\eta,\, \zeta\in [0,1]$ satisfying 
\begin{eqnarray}\label{parameter conditions for W}
0<\gamma<\frac{1}{2}, \qquad \text{and} \quad 0\le \zeta\le \inf\left(\frac{1}{2}-\gamma, \eta\right).
\end{eqnarray}
Then for $(s,t,\tau)\in \Delta_{3}$, we have
\begin{equation}\label{2nd moment of z1 B}
\mathbb{E}[(\bfz^{1,\tau,i}_{ts})^{2}]\lesssim \left|\psi^{1}_{(1/2,\gamma)}\left(\tau,t,s\right)\right|^{2},
\end{equation}
while for $(s,t,\tau^{\prime},\tau)\in\Delta_{4}$, we get
\begin{equation}\label{2nd moment of z1 2 B}
\mathbb{E}[(\bfz^{1,\tau\tau^{\prime},i}_{ts})^{2}]\lesssim\left|\psi^{1,2}_{(1/2,\gamma,\eta,\zeta)}\left(\tau,,\tau^{\prime},t,s\right)\right|^{2},
\end{equation}
where $\psi^{1}$ and $\psi^{1,2}$ are given in Notation \ref{notation of psi}.
\end{lemma}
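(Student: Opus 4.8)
The plan is to follow the proof of Lemma~\ref{z1 second moment} line by line, the only—and simplifying—difference being that for a Brownian motion the Cameron--Martin space is $\mathcal{H}=L^2([0,T])$, so the singular kernel $|u-v|^{2H-2}$ disappears and each second moment reduces, via the Wiener isometry together with the $L^2$ inner product~\eqref{isometry for W}, to a single deterministic integral rather than a double one. Throughout, the role played by the constraint $\gamma<2H-1$ in the fBm case will be played here by the constraint $\zeta\le\inf(\frac{1}{2}-\gamma,\eta)$ of~\eqref{parameter conditions for W}.

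\emph{First bound.} I would start from $\bfz^{1,\tau,i}_{ts}=W^i(h^\tau_{ts})$ and apply the isometry to get
\[
\mathbb{E}\big[(\bfz^{1,\tau,i}_{ts})^2\big]=\|h^\tau_{ts}\|^2_{\mathcal{H}}=\int_s^t(\tau-r)^{-2\gamma}\,dr .
\]
This integral is then estimated in two ways exactly as in \eqref{bound 1}--\eqref{bound 2}: using $\tau-r\ge t-r$ together with $2\gamma<1$ gives $\lesssim(t-s)^{1-2\gamma}$, while using $\tau-r\ge\tau-t$ gives $\lesssim(\tau-t)^{-2\gamma}(t-s)$. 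Taking the minimum and recalling definition~\eqref{psi 1} with $\alpha=\frac{1}{2}$ yields $|\psi^1_{(1/2,\gamma)}(\tau,t,s)|^2$, which is~\eqref{2nd moment of z1 B}.

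\emph{Second bound.} For the $(1,2)$-increment I would write $\bfz^{1,\tau\tau',i}_{ts}=W^i(h^\tau_{ts}-h^{\tau'}_{ts})$, so that the isometry gives
\[
\mathbb{E}\big[(\bfz^{1,\tau\tau',i}_{ts})^2\big]=\int_s^t\big[(\tau'-r)^{-\gamma}-(\tau-r)^{-\gamma}\big]^2\,dr .
\]
Invoking the same elementary inequality on increments of negative powers used in the fBm case, namely $(\tau'-r)^{-\gamma}-(\tau-r)^{-\gamma}\lesssim(\tau-\tau')^{\eta}(\tau'-r)^{-\eta-\gamma}$, reduces this to $(\tau-\tau')^{2\eta}\int_s^t(\tau'-r)^{-2(\eta+\gamma)}\,dr$. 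I would then factor $(\tau'-r)^{-2(\eta+\gamma)}=(\tau'-r)^{-2(\eta-\zeta)}(\tau'-r)^{-2(\gamma+\zeta)}$, bound the first factor by $(\tau'-t)^{-2(\eta-\zeta)}$ (using $\eta\ge\zeta$ and $r\le t\le\tau'$), and treat the residual integral $\int_s^t(\tau'-r)^{-2(\gamma+\zeta)}\,dr$ exactly as in the first step (now with exponent $\gamma+\zeta$ in place of $\gamma$). The two resulting bounds combine into
\[
(\tau'-t)^{-2(\eta-\zeta)}\big([(\tau'-t)^{-2(\gamma+\zeta)}(t-s)]\wedge(t-s)^{1-2\gamma-2\zeta}\big);
\]
multiplying by $(\tau-\tau')^{2\eta}$ and comparing with~\eqref{psi 12} (again $\alpha=\frac{1}{2}$) gives $|\psi^{1,2}_{(1/2,\gamma,\eta,\zeta)}(\tau,\tau',t,s)|^2$, which is~\eqref{2nd moment of z1 2 B}.

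\emph{Main point to watch.} There is no genuine obstacle here, since the argument is a strict simplification of Lemma~\ref{z1 second moment}. The only care needed is the integrability of the residual integrals: the estimate $\int_s^t(t-r)^{-2(\gamma+\zeta)}\,dr\lesssim(t-s)^{1-2\gamma-2\zeta}$ requires $\gamma+\zeta<\frac{1}{2}$, which is exactly what the hypothesis $\zeta\le\inf(\frac{1}{2}-\gamma,\eta)$ provides, and the deliberate insertion of the auxiliary parameter $\zeta$ in the factorization is precisely what produces the correct $\psi^{1,2}$ profile rather than a cruder bound.
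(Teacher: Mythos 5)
Your proposal is correct and follows essentially the same route as the paper: the Wiener/It\^o isometry with $\mathcal{H}=L^2([0,T])$ reduces each second moment to a single deterministic integral, which is then bounded in the two ways of \eqref{bound 1}--\eqref{bound 2} to produce the minimum defining $\psi^1$ and $\psi^{1,2}$. The only difference is that you write out the $(1,2)$-increment estimate in full (mirroring the steps \eqref{324}--\eqref{desired result 2} of the fBm case), whereas the paper proves only \eqref{2nd moment of z1 B} and leaves \eqref{2nd moment of z1 2 B} to the reader.
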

\begin{proof}
In this proof, we will show that \eqref{2nd moment of z1 B} holds for any $(s,t,\tau)\in \Delta_{3}$. Then relation \eqref{2nd moment of z1 2 B} can be proved in a similar way. Toward to this aim, according to Definition \ref{def of z1 B}, we have 
\begin{eqnarray*}
\mathbb{E}[(\bfz^{1,\tau,i}_{ts})^{2}]=\mathbb{E}\left[\left(\int_{s}^{t} \left(\tau-r\right)^{-\gamma}dW^{i}_{r}\right)^{2}\right].
\end{eqnarray*}
Furthermore, recalling that $W$ is a Brownian motion and resorting to \eqref{isometry for W}, we have  
\begin{equation*}
\mathbb{E}\left[\left(\bfz^{1,\tau,i}_{ts}\right)^{2}\right]=\int_{s}^{t} \left(\tau-r\right)^{-2\gamma}dr.
\end{equation*}
Thanks to some elementary calculations similar to \eqref{bound 1}-\eqref{bound 2} in Section~\ref{Volterra rough path driven by fractional Brownian motion} and recalling definition~\eqref{psi 1} for the function $\psi^{1}_{1/2,\gamma}$, we now obtain
\begin{equation}\label{2nd moment of z1 B 1}
\mathbb{E}[(\bfz^{1,\tau,i}_{ts})^{2}]\lesssim \left[|\tau-t|^{-2\gamma}|t-s|\right]\wedge |t-s|^{1-2\gamma}=\left|\psi^{1}_{\frac{1}{2},\gamma}(\tau,t,s)\right|^{2}.
\end{equation}
This is the desired result \eqref{2nd moment of z1 B}.
\end{proof}
Next we turn our attention to construct the second level Volterra rough path over a Brownian motion. 
\begin{Def}\label{the second level Brownian motion Volterra rough path} 
We consider a Brownian motion $W :[0,T]\rightarrow \mathbb{R}^m$, and the first level of the Volterra rough path $\bfz^{1,\tau}$ defined by \eqref{z1 expression B}. As in Definition~\ref{def of z1 B}, we assume that $\gamma<\frac{1}{2}$. Then for $(s,r,t,\tau)\in \Delta_{4}$, we set
\begin{equation}\label{def of u W}
 u^{\tau,i}_{ts}(r)=(\tau-r)^{-\gamma}\bfz^{1,r,i}_{rs}\mathbbm{1}_{[s,t]}(r).
 \end{equation}
 With this notation in hand, we define the increment $\bfz^{2,\tau}_{ts}$ as an It\^o integral of the form 
\begin{equation}\label{z2 ij W}
 \bfz_{ts}^{2,\tau,i,j}=\int_{s}^{t} u^{\tau,i}_{ts}(r) \, \delta^{\diamond}W^{j}_{r}, \quad \text{for} \quad i,j \in \{1,2, \ldots, m\}.
 \end{equation}
Having the Definition \ref{def of z1 B} of $\bfz^{1,\tau}$ in mind when considering the process $u^{\tau,i}$ in \eqref{def of u W}, we get that $\bfz^{2,\tau}$ in \eqref{z2 ij W} is rewritten as 
\begin{equation*}
 \bfz_{ts}^{2,\tau,i,j}=\int_{s}^{t} \int_{s}^{r}(\tau-r)^{-\gamma}(r-l)^{-\gamma} \,\delta^{\diamond}W^{i}_{l}\,\delta^{\diamond}W^{j}_{r}, \quad \text{for}\quad i,j \in \{1,2, \ldots, m\}.
\end{equation*}
\end{Def}
 
\begin{remark}\label{qv}
Observe that in Definition \ref{the second level Brownian motion Volterra rough path} we have chosen to introduce $\bfz^{2,\tau}_{ts}$ as an It\^o-type integral.  This is in contrast with the fBm case with $H>\frac{1}{2}$, for which \eqref{z2 ii} had to be understood in the Stratonovich sense. As mentioned at the beginning of this section, this is due to the fact that the Stratonovich correction terms for $\bfz^{2}$ are diverging, which again is a consequence of the fact that the covariation between a singular fractional Brownian motion and a Brownian motion is diverging. This has also been noted in \cite{Bayer2020}, where infinite renormalization procedures was proposed to deal with this problem in a regularity structures framework. 
We illustrate the issue  in the following computations. 

For $i=1,\ldots, m$ assume that $\bfz^{2,\tau,i,i}_{ts}$ is defined in the Stratonovich sense, written as 
\begin{eqnarray}
\bfz^{2,\tau,i, \textsc{s}}_{ts}=\int^t_s u^{\tau,i}_{ts}(r) \, dW^{i}_r,
\end{eqnarray}
with $u^{\tau,i}_{ts}$ given in \eqref{def of u W} and $dW^{i}$ denoting the Stratonovich differential. Then standard considerations about It\^o-Stratonovich corrections reveal that 
\begin{equation}\label{relation integral W}
\int^{t}_{s}u^{\tau,i}_{ts}(r) \, dW^{i}_{r}=\int^{t}_{s}u^{\tau,i}_{ts}(r) \, \delta^{\diamond}W^{i}_{r}
+\frac{1}{2}\langle u^{\tau,i}_{ts} , W^{i} \rangle_{ts},
\end{equation}  
where  $\langle u^{\tau,i}_{ts} , W^{i} \rangle_{ts}$ denotes the quadratic variation of  $u^{\tau,i}_{ts}$ and $W^{i}$ over the interval $[s,t]$. 

Let us now analyze the quadratic variation term in \eqref{relation integral W}, which can be defined through a discretization procedure. Namely let $\mathcal{P}$ designate a generic partition of $[s,t]$, and $[\tilde{r},r]$ a typical interval of the partition $\mathcal{P}$. Then a classical way to define the quadratic variation is through the following limit in $L^{2}(\Omega)$:
\begin{equation}\label{qv of uW}
\langle u^{\tau,i}_{ts} , W^{i} \rangle_{ts}=\lim_{|\mathcal{P}|\to 0}\sum_{[\tilde{r},r]\in \mathcal{P}} \left(u^{\tau,i}_{ts}(r)-u^{\tau,i}_{ts}(\tilde{r})\right)W^{i}_{r\tilde{r}}.
\end{equation}
Next we can decompose the right hand side of \eqref{qv of uW} in order to get 
\begin{equation}\label{a1}
\langle u^{\tau,i}_{ts} , W^{i} \rangle_{ts}=\lim_{|\mathcal{P}|\to 0}\sum_{[\tilde{r},r]\in \mathcal{P}}(M_{r\tilde{r}}W^{i}_{r\tilde{r}}+V_{r\tilde{r}}W^{i}_{r\tilde{r}}),
\end{equation}
where $M_{r\tilde{r}}$ and $V_{r\tilde{r}}$ are respectively defined by  
\begin{eqnarray*}
M_{r\tilde{r}}=\int_{\tilde{r}}^{r}(\tau-r)^{-\gamma}(r-l)^{-\gamma}dW^{i}_{l},\quad
V_{r\tilde{r}}=\int_{s}^{\tilde{r}}\left[(\tau-r)^{-\gamma}(r-l)^{-\gamma}-(\tau-\tilde{r})^{-\gamma}(\tilde{r}-l)^{-\gamma}\right]dW^{i}_{l}.
\end{eqnarray*}
Starting from \eqref{a1}, we let the patient reader check that the relation below holds in $L^{2}(\Omega)$:
\begin{eqnarray*}
\lim_{|P|\to 0}\sum_{[\tilde{r},r]\in\mathcal{P}}V_{r\tilde{r}}W^{i}_{r\tilde{r}}=0.
\end{eqnarray*}
However the term $\sum_{[\tilde{r},r]\in\mathcal{P}}M_{r\tilde{r}}W^{i}_{r\tilde{r}}$ in \eqref{a1} is more problematic. Specifically, it can be shown (tedious details are left again to the reader for sake of conciseness) that the following quantity converges in $L^{2}(\Omega)$ as $|\mathcal{P}|\to 0$:
\begin{eqnarray*}
\sum_{[\tilde{r},r]\in \mathcal{P}}\left(M_{r\tilde{r}}W^{i}_{r\tilde{r}}-c_{\gamma}(r-\tilde{r})^{1-\gamma}(\tau-r)^{-\gamma}\right),
\end{eqnarray*}
where $c_{\gamma}=(1-\gamma)^{-1}$. Nevertheless, one can simply check that 
\begin{eqnarray*}
\lim_{|\mathcal{P}|\to 0}\sum_{[\tilde{r},r]\in\mathcal{P}}(r-\tilde{r})^{1-\gamma}(\tau-r)^{-\gamma}=\infty.
\end{eqnarray*}
Hence the quantity $\sum_{[\tilde{r},r]\in \mathcal{P}} M_{r\tilde{r}}W^{i}_{r\tilde{r}}$ is also divergent in $L^{2}(\Omega)$. This proves that the quadratic variation in \eqref{a1} is divergent, and thus going back to \eqref{relation integral W} we get that $\bfz^{2,\tau,i,\textsc{s}}_{ts}$ cannot be defined in the Stratonovich sense. 
\end{remark}
We now adapt the computations of Proposition \ref{second moment of z2} in order to estimate the second moment of the increment $\bfz^{2,\tau,i,j}$.
\begin{prop}\label{second moment of z2 W}
Consider the second level $\bfz_{ts}^{2,\tau}$ of the Volterra rough path, as defined in 
\eqref{z2 ij W}. Recall~ that the parameters $\gamma \in (0,1)$, and $\eta, \zeta\in [0,1]$ satisfy relation \eqref{parameter conditions for W}. Then for $(s,t,\tau)\in~ \Delta_{3}$, we have
\begin{equation}\label{prob reg W}
\mathbb{E}\left[\left(\bfz^{2,\tau}_{ts}\right)^{2}\right]\lesssim \left|\psi^{1}_{(1-\gamma,\gamma)}(\tau,t,s)\right|^{2}.
\end{equation}
%As far as the $(1,2)$-type increments are considered.
For $(s,t,\tau^{\prime},\tau)\in\Delta_{4}$, we get
\begin{equation}\label{prob reg 2 W}
\mathbb{E}\left[\left(\bfz^{2,\tau\tau^{\prime}}_{ts}\right)^{2}\right]\lesssim \left|\psi^{1,2}_{(1-\gamma,\gamma,\eta,\zeta)}(\tau,\tau^{\prime},t,s)\right|^{2},
\end{equation}
For both \eqref{prob reg W} and \eqref{prob reg 2 W}, we recall that $\psi^{1}$ and $\psi^{1,2}$ are given in Notation \ref{notation of psi}. 
\end{prop}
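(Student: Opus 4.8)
The plan is to mirror the structure of Proposition \ref{second moment of z2}, while exploiting the fact that here the integral \eqref{z2 ij W} is of It\^o--Skorohod type for \emph{all} pairs $(i,j)$, which streamlines the argument considerably. First I would reduce $\mathbb{E}[(\bfz^{2,\tau,i,j}_{ts})^2]$ to a clean isometry. Since $\bfz^{2,\tau,i,j}_{ts}=\delta^{\diamond,j}(u^{\tau,i}_{ts})$ with $u^{\tau,i}_{ts}$ given by \eqref{def of u W}, the Skorohod isometry on $\mathcal{H}=L^2([0,T])$ reads
\[
\mathbb{E}\big[(\bfz^{2,\tau,i,j}_{ts})^2\big]=\mathbb{E}\big[\|u^{\tau,i}_{ts}\|^2_{\mathcal{H}}\big]+\mathbb{E}\Big[\int_s^t\int_s^t D^j_\sigma u^{\tau,i}_{ts}(r)\,D^j_r u^{\tau,i}_{ts}(\sigma)\,d\sigma\,dr\Big].
\]
The crucial observation is that the trace term vanishes in both cases: when $i\neq j$ we have $D^j u^{\tau,i}_{ts}\equiv 0$ because $u^{\tau,i}_{ts}$ depends only on $W^i$; when $i=j$, the derivative $D^i_\sigma u^{\tau,i}_{ts}(r)=(\tau-r)^{-\gamma}(r-\sigma)^{-\gamma}\mathbbm{1}_{[s,r]}(\sigma)\mathbbm{1}_{[s,t]}(r)$ is supported on $\{\sigma\le r\}$, so the product $D^i_\sigma u^{\tau,i}_{ts}(r)\,D^i_r u^{\tau,i}_{ts}(\sigma)$ lives on the diagonal $\{\sigma=r\}$, a Lebesgue-null set. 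Equivalently, $u^{\tau,i}_{ts}$ is adapted, so the Skorohod integral coincides with the It\^o integral and \eqref{ito isometry} applies directly. This is exactly the structural gain over the Stratonovich setting discussed in Remark \ref{qv}, where the analogous correction diverges.

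With the trace term gone I am left with
\[
\mathbb{E}\big[(\bfz^{2,\tau,i,j}_{ts})^2\big]=\int_s^t (\tau-r)^{-2\gamma}\,\mathbb{E}\big[(\bfz^{1,r,i}_{rs})^2\big]\,dr,
\]
and Lemma \ref{z1 second moment B} (more precisely its proof, giving $\mathbb{E}[(\bfz^{1,r,i}_{rs})^2]=\int_s^r(r-l)^{-2\gamma}dl\lesssim (r-s)^{1-2\gamma}$) reduces the problem to the single deterministic integral $\int_s^t(\tau-r)^{-2\gamma}(r-s)^{1-2\gamma}dr$. I would then bound this in the two ways characteristic of $\psi^1$: using $(\tau-r)\ge(\tau-t)$ produces $(\tau-t)^{-2\gamma}(t-s)^{2-2\gamma}$, while using $(\tau-r)\ge(t-r)$ together with the substitution $r=s+\theta(t-s)$ gives $(t-s)^{2-4\gamma}\,\mathrm{Beta}(2-2\gamma,1-2\gamma)$, the Beta integral being finite precisely because $\gamma<\tfrac12$. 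Taking the minimum and recalling the definition \eqref{psi 1} yields $\mathbb{E}[(\bfz^{2,\tau,i,j}_{ts})^2]\lesssim|\psi^1_{(1-\gamma,\gamma)}(\tau,t,s)|^2$, which is \eqref{prob reg W}.

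For the $(1,2)$ estimate \eqref{prob reg 2 W}, I would apply the same isometry to the increment $\bfz^{2,\tau\tau',i,j}_{ts}=\delta^{\diamond,j}\big((\tau-\cdot)^{-\gamma}\bfz^{1,\cdot,i}_{\cdot s}-(\tau'-\cdot)^{-\gamma}\bfz^{1,\cdot,i}_{\cdot s}\big)$, reducing to $\int_s^t[(\tau'-r)^{-\gamma}-(\tau-r)^{-\gamma}]^2\,\mathbb{E}[(\bfz^{1,r,i}_{rs})^2]\,dr$. The elementary increment inequality $(\tau'-r)^{-\gamma}-(\tau-r)^{-\gamma}\lesssim(\tau-\tau')^{\eta}(\tau'-r)^{-\eta-\gamma}$ already invoked for \eqref{324} then turns this into $|\tau-\tau'|^{2\eta}\int_s^t(\tau'-r)^{-2\eta-2\gamma}(r-s)^{1-2\gamma}dr$, and the same two-sided bounding as above recovers $|\psi^{1,2}_{(1-\gamma,\gamma,\eta,\zeta)}(\tau,\tau',t,s)|^2$ through the factorization \eqref{relation psi 1 and 12}.

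The only genuinely delicate point throughout is the justification that the Skorohod correction term vanishes, since this is exactly what makes the It\^o construction admit finite moments where the Stratonovich one fails (cf. Remark \ref{qv}); once that is established, all the remaining estimates are routine specializations to $H=\tfrac12$ of the computations in Lemma \ref{z1 second moment} and Proposition \ref{second moment of z2}, and I would simply refer to those for the elementary Beta-function bookkeeping rather than reproduce it.
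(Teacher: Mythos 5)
Your proposal is correct and follows essentially the same route as the paper: reduce to the It\^o isometry (the paper invokes it directly for the adapted integrand, whereas you justify it via the vanishing Skorohod trace term, which is an equivalent and slightly more careful formulation), plug in the bound $\mathbb{E}[(\bfz^{1,r,i}_{rs})^{2}]\lesssim (r-s)^{1-2\gamma}$, and estimate $\int_s^t(\tau-r)^{-2\gamma}(r-s)^{1-2\gamma}dr$ in the two standard ways to recover $\psi^{1}_{(1-\gamma,\gamma)}$. Your sketch of the $(1,2)$-increment bound, which the paper leaves to the reader, is also the intended argument and correctly uses $\zeta<\tfrac12-\gamma$ to keep the Beta integral finite.
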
 
\begin{proof}
This proof is very similar to the proof of Lemma \ref{z1 second moment B}. We will prove \eqref{prob reg W}, and let the patient reader show that \eqref{prob reg 2 W} holds for $(s,t,\tau^{\prime},\tau)\in\Delta_{4}$.  For $(s,t,\tau)\in \Delta_3$ we have 
\begin{equation*}
\mathbb{E}\left[\left(\bfz^{2,\tau,i,j}_{ts}\right)^{2}\right]=\mathbb{E}\left[\left(\int_{s}^{t} u^{\tau,i}_{ts}(r) dW_{r}^{j}\right)^{2}\right].
\end{equation*} 
Hence according to It\^o's isometry, we obtain
\begin{equation}\label{2nd moment z2 expression W}
\mathbb{E}\left[\left(\bfz^{2,\tau,i,j}_{ts}\right)^{2}\right]=\int_{s}^{t}\mathbb{E}\left[\left(u^{\tau,i}_{ts}\right)^{2}\right]dr.
\end{equation}
Moreover recalling the definition \eqref{def of u W} of $u$, we get 
\begin{equation}\label{2nd moment u expression W}
\mathbb{E}\left[\left(u^{\tau,i}_{ts}\right)^{2}\right]=\mathbb{E}\left[(\tau-r)^{-2\gamma}\left(\bfz^{1,r,i}_{rs}\right)^{2}\mathbbm{1}_{[s,t]}(r)\right]=(\tau-r)^{-2\gamma}\mathbb{E}\left[\left(\bfz^{1,r,i}_{rs}\right)^{2}\right]\mathbbm{1}_{[s,t]}(r).
\end{equation}
Thanks to \eqref{2nd moment of z1 B 1}, we have $
\mathbb{E}\left[\left(\bfz^{1,r,i}_{rs}\right)^{2}\right]\lesssim (r-s)^{1-2\gamma}
$. Then relation \eqref{2nd moment u expression W} reads
\begin{equation}\label{2nd moment u W}
\mathbb{E}\left[\left(u^{\tau,i}_{ts}\right)^{2}\right]\lesssim (\tau-r)^{-2\gamma}(r-s)^{1-2\gamma}\mathbbm{1}_{[s,t]}(r).
\end{equation}
Eventually plugging \eqref{2nd moment u W} into \eqref{2nd moment z2 expression W}, we get 
\begin{equation}\label{2nd moment z2 W}
\mathbb{E}\left[\left(\bfz^{2,\tau}_{ts}\right)^{2}\right]\lesssim\int_{s}^{t} (\tau-r)^{-2\gamma}(r-s)^{1-2\gamma}dr.
\end{equation}
In order to find a bound for the right hand side of \eqref{2nd moment z2 W}, the procedure is very similar to \eqref{318}-\eqref{bound 2} in Proposition \ref{z1 second moment}. We finally obtain
\begin{equation*}
\mathbb{E}\left[\left(\bfz^{2,\tau}_{ts}\right)^{2}\right]\lesssim \left[|\tau-t|^{-2\gamma}|t-s|^{2-2\gamma}\right]\wedge |t-s|^{2-4\gamma}=\left|\psi^{1}_{(1-\gamma,\gamma)}(\tau,t,s)\right|^{2},
\end{equation*}
where we have appealed the definition \eqref{psi 1} of $\psi^{1}$ for the second identity of the above equation. This is the desired result \eqref{prob reg W}.
\end{proof}
With Definition \ref{def of z1 B} and \ref{the second level Brownian motion Volterra rough path} in hand, we have constructed a Volterra rough path family $\{\bfz^{1,\tau},\bfz^{2,\tau}\}$ over a Brownian motion,  and we have also upper bounded their second moment in Lemma \ref{z1 second moment B} and Proposition \ref{second moment of z2 W}. In the following, we close this paper with verifying that $\{\bfz^{1,\tau},\bfz^{2,\tau}\}$ satisfies Definition \ref{hyp 2}. Let us first state that $\bfz^{1,\tau}$ satisfies all properties that mentioned in Definition \ref{hyp 2}.
\begin{prop}\label{z1 space B}
Consider the increment $\bfz^{1,\tau}$ introduced in Definition \ref{def of z1 B}. Then for any $\alpha\in~(0,\frac{1}{2})$, and  $\zeta, \eta \in[0,1]$ satisfying the relation \eqref{condition parameters 2}, we have 
\begin{enumerate}[wide, labelwidth=!, labelindent=0pt, label=\emph{(\roman*)}]

\item
$\bfz^{1,\tau}$ is almost surely in the Volterra space $\mathcal{V}^{(\alpha,\gamma,\eta,\zeta)}(\Delta_{3};\mathbb{R}^{m})$, where $\mathcal{V}^{(\alpha,\gamma,\eta,\zeta)}(\Delta_{3};\mathbb{R}^{m})$ is introduced in Definition \ref{Volterra space}.

\item
For all $p\ge 1$ we have that 
\begin{equation}\label{2p moment z1 B}
\mathbb{E}\left[\|\bfz^{1}\|_{(\alpha,\gamma,\eta,\zeta)}^{2p}\right]<\infty.
\end{equation}

\item
Recalling the definition \eqref{delta} of $\delta$, then $\delta_{m}\bfz^{1,\tau}_{ts}$ satisfies relation \eqref{hyp b}. Namely almost surely we have
\begin{equation}\label{prob chen z1 B}
\delta_{m}\bfz^{1,\tau,i}_{ts}=0,\,\, \text{for all}\,\, (s,m,t,\tau)\in \Delta_{4} .
\end{equation}
\end{enumerate}
\end{prop}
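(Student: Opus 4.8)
The three assertions are the Brownian-motion counterparts of Proposition \ref{z1 space} (for (i) and (ii)) and Proposition \ref{almost surely delta z1} (for (iii)). The plan is therefore to reproduce those two arguments, replacing the Hurst parameter $H$ by $\frac12$ and the second-moment estimates of Lemma \ref{z1 second moment} by those of Lemma \ref{z1 second moment B}. Throughout I would use that, by \eqref{z1 expression B}, every increment $\bfz^{1,\tau,i}_{ts}$ and $\bfz^{1,\tau\tau',i}_{ts}$ is a Wiener integral against $W^i$, hence a centred Gaussian variable, so that all of its $L^{2p}(\Omega)$ norms are controlled by its $L^2(\Omega)$ norm.

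I would begin with the algebraic identity (iii). For fixed $(s,m,t,\tau)\in\Delta_4$ the function $h^\tau_{ts}(r)=(\tau-r)^{-\gamma}\mathbbm{1}_{[s,t]}(r)$ satisfies $h^\tau_{ts}-h^\tau_{tm}-h^\tau_{ms}=0$ as an element of $\mathcal{H}=L^2([0,T])$; by linearity of $h\mapsto W^i(h)$ this gives $\delta_m\bfz^{1,\tau,i}_{ts}=W^i(h^\tau_{ts}-h^\tau_{tm}-h^\tau_{ms})=0$ almost surely. To upgrade this to a statement valid simultaneously for all $(s,m,t,\tau)$, I would invoke Kolmogorov's continuity criterion exactly as in Proposition \ref{almost surely delta z1}: Lemma \ref{z1 second moment B} together with the Gaussian property yields, for a small $\epsilon>0$ and $\zeta=\eta=\frac12-\gamma-\epsilon$, a bound of the form $\|\bfz^{1,\tau\tau'}_{ts}\|_{L^p(\Omega)}\lesssim |\tau-\tau'|^{1/2-\gamma-\epsilon}+|t-s|^{1/2-\gamma}$, from which the joint continuity of $(t,\tau)\mapsto\bfz^{1,\tau}_t$ follows; a rational-approximation argument then propagates the null identity to every $(s,m,t,\tau)\in\Delta_4$.

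For the moment bound (ii) I would apply Theorem \ref{Volterra GRR inequality} with the target Hölder exponent $\alpha$ in the role of $\kappa$ and the genuine regularity $\frac12$ in the role of the theorem's $\alpha$. Since $\delta\bfz^1=0$ by (iii), the correction terms in \eqref{eq:bound 1} and \eqref{eq:bound 2} vanish, so both norms are dominated by the corresponding $U$-functionals alone. Taking $2p$-th moments, using Fubini's theorem and the Gaussian equivalence $\mathbb{E}[|\bfz^{1,\tau}_{wv}|^{2p}]\lesssim(\mathbb{E}[|\bfz^{1,\tau}_{wv}|^2])^p$ (and likewise for the $(1,2)$-increment), I would insert the estimates \eqref{2nd moment of z1 B} and \eqref{2nd moment of z1 2 B}. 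After the same cancellations of $\psi^1$ and $\psi^{1,2}$ factors carried out in \eqref{z1 2p bound}--\eqref{z1 1-norm bound} and \eqref{z1 GRR 12 1}--\eqref{z1 12-norm bound}, the $1$-norm moment reduces to $\int_{\Delta_2^\tau}|w-v|^{2p(1/2-\alpha)-2}\,dv\,dw$ and the $(1,2)$-norm moment to $\int_{\Delta_4^\tau}|w-v|^{2p(1/2-\alpha)-4}\,dv\,dw\,dr'\,dr$. Both are finite once $p$ is large enough, namely $\alpha<\frac12-\frac{3}{2p}$; since $\|\bfz^1\|_{(\alpha,\gamma,\eta,\zeta)}$ is a fixed nonnegative random variable, Jensen's inequality then upgrades finiteness at one large $p$ to all $p\ge1$, proving \eqref{2p moment z1 B}. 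Part (i) is an immediate consequence, as finiteness of the moments forces $\|\bfz^1\|_{(\alpha,\gamma,\eta,\zeta)}<\infty$ almost surely.

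I do not expect any serious obstacle here, as the structure is identical to the fractional case; the only points demanding care are bookkeeping ones. First, one must check that the hypotheses of Lemma \ref{z1 second moment B}, in particular $0\le\zeta\le\frac12-\gamma$, are compatible with \eqref{condition parameters 2}: this holds automatically, since $\alpha<\frac12$ forces $\zeta\le\rho=\alpha-\gamma<\frac12-\gamma$. Second, the integrability near the diagonal is the genuinely quantitative step, and it dictates the admissible range of $p$ through the exponents $2p(1/2-\alpha)-2$ and $2p(1/2-\alpha)-4$; one must verify these exceed $-1$, which is precisely where the constraint $\alpha<\frac12$ is consumed.
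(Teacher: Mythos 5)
Your proposal is correct and follows exactly the route the paper intends: the paper gives no details for Proposition \ref{z1 space B}, simply deferring to Propositions \ref{z1 space} and \ref{almost surely delta z1} with $H$ replaced by $\tfrac12$ and Lemma \ref{z1 second moment} replaced by Lemma \ref{z1 second moment B}, which is precisely what you carry out. Your bookkeeping (vanishing of $\delta\bfz^1$, Gaussian moment equivalence, the integrability thresholds $\alpha<\tfrac12-\tfrac{3}{2p}$, and the downward transfer of moment bounds to all $p\ge 1$) matches the fBm argument faithfully.
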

The proof is very similar to the proof as for Proposition \ref{z1 space}-\ref{almost surely delta z1}, we let the patient reader check the details. Similarly, we obtain the following Proposition for the second order integral $\bfz^{2}$.
 
\begin{prop}\label{delta z2 space B}
Consider the second level $\bfz^{2,\tau}$ of the Volterra rough path as defined in 
\eqref{z2 ij W}. Then the following properties hold for any $\alpha\in~(0,\frac{1}{2})$, and  $\zeta, \eta \in[0,1]$ satisfying the relation \eqref{condition parameters 2}.
\begin{enumerate}[wide, labelwidth=!, labelindent=0pt, label=\emph{(\roman*)}]

\item
$\bfz^{2,\tau}$ is almost surely an element of $\mathcal{V}^{(2\rho+\gamma,\gamma,\eta,\zeta)}$ for all $(\eta,\zeta)\in \ca_N$ as defined in \eqref{def of A}.% where $\mathcal{V}^{(2\rho+\gamma,\gamma,\eta,\zeta)}(\Delta_{3};\mathbb{R}^{m})$ is introduced in Definition \ref{Volterra space}.

\item 
For all $p\ge 1$ and $(\eta,\zeta)\in \ca_N$ we have that 
\begin{equation}\label{2p moment z2 B}
\mathbb{E}\left[\|\bfz^{2}\|_{(2\rho+\gamma,\gamma,\eta,\zeta)}^{2p}\right]<\infty.
\end{equation}

\item
Recalling the definition \eqref{delta} of $\delta$, then $\delta_{m}\bfz^{2,\tau}_{ts}$ satisfies relation \eqref{hyp b}, that is 
\begin{equation}\label{prob chen B}
\delta_{m}\bfz^{2,\tau,i,j}_{ts}=\bfz^{1,\tau,j}_{tm}\ast \bfz^{1,\cdot,i}_{ms}, \quad \text{for all}\,\,(s,m,t,\tau)\in \Delta_{4} \quad \text{a.s.}.
\end{equation}
\end{enumerate}

\end{prop}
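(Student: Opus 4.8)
The three assertions are the exact Brownian counterparts of Propositions \ref{delta z2 relation}, \ref{delta z2 space} and \ref{z2 space}, so the plan is to transport those proofs, the only changes being that the stochastic integral is now of It\^o (Skorohod) type throughout and that the second moment bound \eqref{prob reg W} of Proposition \ref{second moment of z2 W} (with the exponent $1-\gamma=2H-\gamma$ at $H=\frac12$) replaces \eqref{prob reg}. I would establish the items in the order (iii), then the regularity of $\delta\bfz^2$, then (ii), and finally (i) as a corollary of (ii).

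For (iii), fix $(s,m,t)\in\Delta^{\tau}_3$. Since the kernel $h$ is identical to the one in Definition \ref{def of z1}, the computation of $\delta_m u^{\tau,i}_{ts}$ carried out from \eqref{delta u expression} to \eqref{u expression 1} applies unchanged and gives $\delta_m u^{\tau,i}_{ts}(r)=h^{\tau}_{tm}(r)\,\bfz^{1,r,i}_{ms}$. Applying the linear operator $\delta^{\diamond}$ to this identity produces
\begin{equation*}
\delta_m\bfz^{2,\tau,i,j}_{ts}=\int_{m}^{t}(\tau-r)^{-\gamma}\,\bfz^{1,r,i}_{ms}\,\delta^{\diamond}W^{j}_{r}.
\end{equation*}
The key observation is that the integrand $r\mapsto(\tau-r)^{-\gamma}\bfz^{1,r,i}_{ms}$ is adapted and involves $W^i$ only through $\bfz^{1,r,i}_{ms}=W^i(h^{r}_{ms})$, which depends on $W^i$ on $[s,m]$ alone, i.e.\ away from the integration point $r\in[m,t]$. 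Writing the It\^o integral as the $L^2$-limit of its left-point Riemann sums $\sum_{[r,v]}\bfz^{1,\tau,j}_{vr}\,\bfz^{1,r,i}_{ms}$ over partitions of $[m,t]$ — using the Riemann-sum representation \eqref{def of wiener} for $\bfz^{1,\tau,j}_{vr}=W^j(h^{\tau}_{vr})$ — we recognize exactly the sums defining $\bfz^{1,\tau,j}_{tm}\ast\bfz^{1,\cdot,i}_{ms}$ in \eqref{convolution in 1d}. Passing to an a.s.-convergent subsequence yields \eqref{prob chen B} for fixed $(s,m,t)$, and the extension to all tuples follows from Kolmogorov's criterion exactly as in Step 3 of Proposition \ref{delta z2 relation}.

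With \eqref{prob chen B} available, the regularity estimate for $\delta\bfz^2$ is obtained as in Proposition \ref{delta z2 space}: the relation $\delta_u\bfz^{2,\tau}_{ts}=\bfz^{1,\tau}_{tu}\ast\bfz^{1,\cdot}_{us}$ together with the convolution bounds \eqref{eq:z conv y bound}--\eqref{eq:z conv y bound 2} of Proposition \ref{one step conv} gives $\|\delta\bfz^2\|_{(2\rho+\gamma,\gamma,\eta,\zeta)}\lesssim\|\bfz^1\|^2_{(\alpha,\gamma,\eta,\zeta)}$, whose $2p$-moments are finite for all $p$ by Proposition \ref{z1 space B}. One then repeats the proof of Proposition \ref{z2 space}: the GRR bounds \eqref{eq:bound 1}--\eqref{eq:bound 2} reduce the control of $\mathbb{E}[\|\bfz^2\|^{2p}_{(2\rho+\gamma,\gamma,\eta,\zeta)}]$ to the $\delta\bfz^2$-moments just bounded and to the functionals $U^T$. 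For the latter, Fubini, the second moment bound \eqref{prob reg W}, and the equivalence of all $L^p$-norms on the second Wiener chaos (to which $\bfz^{2,\tau}_{wv}$ belongs) reduce matters to the convergence of integrals of the type $\int_{\Delta^{\tau}_2}|w-v|^{4p(\frac12-\alpha)-2}\,dv\,dw$ and its $(1,2)$-analogue, which hold for $p$ large; the chaos equivalence then upgrades the bound to every $p\ge1$, giving (ii). Assertion (i) is immediate from (ii).

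The point that most deserves care is the consistency of the probabilistic (It\^o) construction with the algebraic convolution structure in (iii). It works here precisely because, in the Chen decomposition, the integrand $\bfz^{1,r,i}_{ms}$ is frozen at the inner endpoints $s\le m\le r$ and is therefore regular in the integration variable $r$, so the left-point convention built into \eqref{convolution in 1d} unambiguously reproduces the It\^o integral. This should be contrasted with the definition of $\bfz^2$ itself, where the integrand $\bfz^{1,r,i}_{rs}$ does depend on $W^i$ near $r$: there the Stratonovich correction diverges when $\gamma>0$ (Remark \ref{qv}), which is exactly why the It\^o interpretation must be adopted from the outset.
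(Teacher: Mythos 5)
Your proposal is correct and follows essentially the same route as the paper's (sketched) proof: item (iii) via the computation of $\delta_m u^{\tau,i}_{ts}$ and Riemann-sum identification of the It\^o integral with the convolution product, the bound on $\delta\bfz^2$ via Proposition \ref{one step conv}, item (ii) via the Volterra GRR inequality combined with the second-moment estimates of Proposition \ref{second moment of z2 W} and equivalence of $L^p$-norms on the second chaos, and (i) as a consequence of (ii). Your closing remark — that the Chen decomposition's integrand $\bfz^{1,r,i}_{ms}$ is adapted and measurable with respect to the noise on $[s,m]$, which is exactly why the left-point (It\^o) convention matches the convolution product \eqref{convolution in 1d} — is a useful elaboration of what the paper dismisses as "classical It\^o integration considerations."
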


\begin{proof}
As the proof is very similar to various proofs Section \ref{Sec: prop of volterra RP} we only give a sketch of the method here, and refer to  equivalent proofs in this section for more details.

First it is readily seen that $(i)$ follows from $(ii)$. 
To prove $(ii)$ we will resort to the Volterra GRR lemma \ref{GRR lemma volterra}, in combination with the moment estimates obtained in Proposition \ref{second moment of z2 W}. 
Recall that since $\bfz^2$ is an element of the second chaos of the fBm, the $L^p$ norms are equivalent. Therefore from  the moment estimates obtained in Proposition \ref{second moment of z2 W}, we have that 
\begin{equation}\label{eq:BDG Brownian}
\begin{aligned}
    \EE[(\bfz^{2,\tau}_{ts})^{2p}]&\lesssim \left|\psi^1_{(1-\gamma,\gamma)}(\tau,t,s)\right|^{2p}
    \\
     \EE[(\bfz^{2,\tau\tau'}_{ts})^{2p}]&\lesssim \left|\psi^{1,2}_{(1-\gamma,\gamma,\eta,\zeta)}(\tau,\tau',t,s)\right|^{2p}. 
\end{aligned}
\end{equation}
Invoking relation relation \eqref{eq:bound 1}, we can proceed directly in the same way as in the proof of Proposition~\ref{z2 space} (note that in this case $\rho=\frac{1}{2}$). Combining with the bounds in \eqref{eq:BDG Brownian}, this proves the claim in $(ii)$. 
Claim $(iii)$ can be shown in the same spirit as Proposition \ref{delta z2 relation}, although the integral must now be interpreted in the It\^o sense. Thus Step 1 of the proof of Proposition \ref{delta z2 relation} is exactly the same, while in Step 2 one must consider the integration argument in the It\^o sense. This follows by classical It\^o integration considerations. Step 3 follows by exactly the same arguments. 
This concludes the proof. 
\end{proof}

\subsection{Further extensions and concluding remarks}
We have provided a construction of the Volterra rough path $(\bfz^1,\bfz^2)$ when the driving process is a fractional Brownian motion with $H>\frac{1}{2}$ or a Brownian motion, and the Volterra kernel is allowed to be singular. This corresponds to the  Volterra rough path needed in order to deal with the regularity regime $\alpha-\gamma\geq \frac{1}{3}$, constructed in~\cite{HarangTindel}. It is desirable to extend this construction further to also include higher order components of the signature. As illustrated in the article \cite{HarangTindelWang} and \cite{bruned2021ramification}, in such an extension one will need to deal with several different types of iterated integrals. This abundance of necessary iterated integrals stems from the non-geometeric nature of the Volterra rough path. A more systematic analysis based on related algebraic structures, together with the tools based on Malliavin calculus invoked in the current article, is therefore needed to deal with this problem. 

It is also natural to consider the case of rough fractional Brownian motions as the driving noise (i.e. $H<\frac{1}{2}$). However, the techniques used here, based on the integrability of the mixed partial derivative of the covariance function $R(s,t)$, will no longer work. One will therefore need to use new tools to handle this issue. We  expect that techniques inspired by the results in \cite{Friz2016}, in combination with sewing techniques for Volterra covariance functions developed in \cite{BenthHarang2020}, would prove useful to this aim. However, we leave this problem for  future consideration.

\bigskip

\bibliographystyle{plain}
\bibliography{bib}

\begin{comment}

\end{comment}

%\hfill { \footnotesize Date: \today}

\end{document}